\documentclass[a4paper, twoside, 11pt]{amsart}

\usepackage[margin=3cm]{geometry}                 
\usepackage{graphicx}
\usepackage{amssymb}
\usepackage{mathrsfs}
\usepackage{enumitem}
\usepackage{array,multirow}
\usepackage{mathtools}
\usepackage[usenames,dvipsnames]{xcolor}
\usepackage{dsfont}
\usepackage{calligra}
\DeclareMathAlphabet{\mathcalligra}{T1}{calligra}{m}{n}
\usepackage{tikz}
\usepackage{tikz-cd}
\usetikzlibrary{shapes.misc}
\usetikzlibrary{decorations.markings}
\usepackage{etoolbox}
\usepackage{float}
\usepackage{multicol}
\usepackage{rotating}
\mathtoolsset{showonlyrefs=true}
\usepackage{arydshln}
\usetikzlibrary{positioning}
\usepackage{caption}
\usepackage{subcaption}
\usepackage{mathdots}

\usepackage[]{hyperref}
\hypersetup{
    colorlinks=true,
    citecolor=blue,
    linkcolor=blue,
    filecolor=magenta,      
    urlcolor=cyan,
    }

\usepackage{soul}
\usepackage[textwidth=1.8cm]{todonotes}  
\usepackage{marginnote}
\usepackage{zref-savepos}

\newcount\todocount

\newcommand{\checkxpos}[3][]{%
  \ifdim \zposx{#2}sp < 20000000sp%
    \mynote[#1]{#3}%
  \else%
    \note[#1]{#3}%
  \fi%
}

\usepackage[T2A]{fontenc}
\usepackage[utf8]{inputenc}
\usepackage[russian, main=english]{babel}


\newcommand{\R}{\mathbb R}

\newcommand{\CC}[1]{\mathbb{C}^{#1}}

\newcommand{\RR}[1]{\mathbb{R}^{#1}}
\newcommand{\dw}{\frac{\partial}{\partial w}}
\newcommand{\z}{\zeta}
\newcommand{\dz}[1]{\frac{\partial}{\partial z_{#1}}}
\newcommand{\dzz}{\frac{\partial}{\partial \zeta}}

\newcommand{\re}{\ensuremath{\mbox{\rm Re}\,}}
\newcommand{\hol}{\mathfrak{hol}}
\newcommand{\aut}{\mathfrak{aut}}
\newcommand{\g}{\mathfrak g}


\newtheorem{theorem}{Theorem}[section]
\newtheorem{lemma}[theorem]{Lemma}

\newtheorem{proposition}[theorem]{Proposition}
\newtheorem{corollary}[theorem]{Corollary}

\newtheorem{remark}[theorem]{Remark}
\newtheorem{definition}[theorem]{Definition}

\setcounter{tocdepth}{1}

\makeatletter
\def\@tocline#1#2#3#4#5#6#7{\relax
  \ifnum #1>\c@tocdepth 
  \else
    \par \addpenalty\@secpenalty\addvspace{#2}%
    \begingroup \hyphenpenalty\@M
    \@ifempty{#4}{%
      \@tempdima\csname r@tocindent\number#1\endcsname\relax
    }{%
      \@tempdima#4\relax
    }%
    \parindent\z@ \leftskip#3\relax \advance\leftskip\@tempdima\relax
    \rightskip\@pnumwidth plus4em \parfillskip-\@pnumwidth
    #5\leavevmode\hskip-\@tempdima
      \ifcase #1
       \or\or \hskip 1em \or \hskip 2em \else \hskip 3em \fi%
      #6\nobreak\relax
    \dotfill\hbox to\@pnumwidth{\@tocpagenum{#7}}\par
    \nobreak
    \endgroup
  \fi}
\makeatother

\title[New examples of $2$-nondegenerate hypersurfaces]{New examples of $2$-nondegenerate real hypersurfaces in $\mathbb{C}^N$ with arbitrary nilpotent symbols}

\date{}

\author{Martin Kol\'{a}\v{r}}
\address{Martin Kol\'{a}\v{r},
	Department of Mathematics and Statistics, 
	Masaryk University,
	Kotl\'{a}\v{r}sk\'{a}~2,
	611 37 Brno,
	Czech Republic}\email{ mkolar@math.muni.cz}
\urladdr{\url{http://www.math.muni.cz/~mkolar}}

\author{Ilya Kossovskiy}
\address{Ilya Kossovskiy,
	Department of Mathematics and Statistics, 
	Masaryk University,
	Kotl\'{a}\v{r}sk\'{a}~2,
	611 37 Brno, Czech Republic,
 $\&$
 Institute of Discrete Mathematics and Geometry,
 TU Vienna,
	Austria}\email{ Kossovskiyi@math.muni.cz} 
\urladdr{\url{http://www.math.muni.cz/~kossovskiyi}}

\author{David Sykes}
\address{David Sykes,
	Department of Mathematics and Statistics, 
	Masaryk University,
	Kotl\'{a}\v{r}sk\'{a}~2,
	611 37 Brno,
	Czech Republic, $\&$ Institute of Discrete Mathematics and Geometry,
 TU Vienna,
	Austria}\email{ sykes@math.muni.cz}
\urladdr{\url{http://www.math.muni.cz/~sykes}}

\makeatletter
\@namedef{subjclassname@2020}{\textup{2022} Mathematics Subject Classification}
\makeatother
\subjclass[2020]{32V05, 32V40, 53C30}
\keywords{$2$-nondegenerate CR structures, real hypersurfaces, homogeneous manifolds}
\thanks{The first and the second author were supported by the GACR grant GC22-15012J, the third author was supported by Austrian Science Fund (FWF): P34369.}
\begin{document}

\begin{abstract}
    We introduce a class of uniformly $2$-nondegenerate CR hypersurfaces in $\mathbb{C}^N$, for $N>3$, having a rank $1$ Levi kernel. The class is first of all remarkable by the fact that for every $N>3$ it forms an {\em explicit} infinite-dimensional family of everywhere $2$-nondegenerate hypersurfaces. To the best of our knowledge, this is the first such construction. Besides, the class an infinite-dimensional family of non-equivalent structures having a given constant nilpotent CR symbol for every such symbol. Using methods that are able to handle all cases with $N>5$ simultaneously, we solve the equivalence problem for the considered structures whose symbol is represented by a single Jordan block, classify their algebras of infinitesimal symmetries, and classify the locally homogeneous structures among them. We show that the remaining considered structures, which have symbols represented by a direct sum of Jordan blocks, can be constructed from the single block structures through simple linking and extension processes.
\end{abstract}

\maketitle
\tableofcontents

\section{Introduction}

 The main goal of this paper is further understanding the geometry of an intriguing and not yet well understood class of real {submanifolds} in complex space $\mathbb{C}^N$ called $2$-nondegenerate hypersufaces. The latter have been a subject of intensive research over recent decades, since \cite{freeman1974local,freeman1977local} and the inception of the $l$-nondegeneracy notion in \cite{bhr}. We refer to 
  e.g. \cite{ebenfeltC3,ebenfeltduke,kaup2,fels2008classification,iz,kaupzaitsev,ms,Poc} (see \cite{MP} for the published version of \cite{Poc}), and the introduction in \cite{kolar2019complete} for a historic outline in the lowest dimension $N=3$ where $2$-nondegeneracy occurs, and emphasize that in higher dimensions $N>3$ the study of $2$-nondegenerate hypersurfaces exhibits considerable further difficulties (compared to the $N=3$ case). Principally, in higher dimensions the structures have nontrivial CR symbols, a basic local invariant. We refer to \cite{porter2021absolute} where these symbols were introduced for some recent advances here, and mention that a natural question which remains unsolved within these developments is a description of the moduli space of the $2$-nondegenerate structures whose CR symbol is constant. In particular, little is known about the extent to which constancy of the CR symbol imposes rigidity phenomena, that is, whether for a given symbol the constant symbol structures are locally unique, exiguous, or existing in rich variety. Addressing the latter question for a large class of symbols is one of the main outcomes of the current paper.

{We introduce} new examples (Theorems \ref{main theorem} and \ref{second main theorem}) of holomorphically nondegenerate hypersurfaces in $\mathbb{C}^{n+1}$ for $n>2$ whose Levi form has a $1$-dimensional kernel at every point and that have arbitrary constant nilpotent CR symbols (defined in Remark \ref{nilpotent symbol definition}), supplying a vast set (parameterized by orbits of a Lie group acting on an infinite-dimensional vector space) of examples for each possible symbol. In particular, we obtain for every $n>2$ an {\em explicit} infinite-dimensional family of everywhere $2$-nondegenerate hypersurfaces in $\mathbb C^{n+1}$. To the best of our knowledge, this is the first such construction (where the everywhere $2$-nondegenerate hypersurfaces are described explicitely, without appealing to the space of solutions of the Levi equation). Notably, each class under discussion is a family of perturbations (by higher order terms) of the same polynomial model \eqref{model}.  Using methods that handle all cases with $n>4$ simultaneously, we solve the equivalence problem for the considered structures whose symbol is represented by a single Jordan block (Theorem \ref{Holomorphic classification theorem}), classify their algebras of infinitesimal symmetries (Theorem \ref{thealgebras}), and classify the locally homogeneous structures among them (Theorem \ref{h structures theorem}).

A surprising dearth of non-homogeneous constant symbol CR structure examples (of the type considered in  \cite{porter2021absolute,sykes2023geometry}) was this work's initial motivation. Indeed, although there are general arguments ensuring existence of a rich variety of such structures -- 
e.g., such variety becomes apparent when contrasting the main results of \cite{fels2008classification} and \cite{kolar2019complete}
-- relatively few hypersurface realizations of such structures have been described in a closed form before.


We will now outline the paper's structure while summarizing its main results. For each $n>2$, there are archetypal hypersurfaces  in $\mathbb{C}^{n+1}$ from which we will construct this paper's examples. They have the general form of CR tubes
\begin{align}\label{FGF}
x_0=\sum_{0<j<j+k\leq n}S_{j,k}x_jx_kx_n^{n-j-k},
\end{align}
where $S_{j,k}$ are scalar components of some symmetric matrix $S$, and  where here and throughout the sequel we let $z=(z_0,\ldots, z_n)$ denote standard complex coordinates of $\mathbb{C}^{n+1}$ with real and imaginary parts given by 
\[
z_j=x_j+iy_j.
\]
 We will also later use the notation $z_j = \Re(z_j)+ i \Im(z_j)$.
Occasionally we will switch between the alternative labels of
\begin{align}\label{alternate notation}
w=u+iv=z_0 
\quad\mbox{ and }
\zeta=s+it=z_n,
\end{align}
to emphasize the distinguished geometric significance of these variables $w$ and $\zeta$, namely, that the lines and planes parameterized by $v$ and $\zeta$ will be respectively transversal to the considered hypersurface and tangent to its Levi kernel at the origin.

Formula \eqref{FGF} does not, however, define a Levi degenerate hypersurface for generic $S$, so we seek values of $S$ that make \eqref{FGF} both Levi degenerate and holomorphically nondegenerate, which for the hypersurfaces of the form in \eqref{FGF} is equivalent to $2$-nondegeneracy. The general solution for such $S$ is obtained in Section \ref{Uniqueness of the defining equation formula} (Theorem \ref{solution to S theorem}), but first let us consider one strikingly simple solution, given by taking $S_{j,k}=1$ for all $(j,k)$.
\begin{theorem}\label{main theorem}
For every integer $n>2$ and twice differentiable function $f:\R\to \R$, the hypersurface $M_f$ in $\mathbb{C}^{n+1}$  defined by
\begin{align}\label{main formula}
x_0=f(x_1)+\sum_{0<j<j+k\leq n}x_jx_kx_n^{n-j-k}
\end{align}
is uniformly $2$-nondegenerate, has a $1$-dimensional Levi kernel, and at every point its Levi kernel adjoint operators (local invariants defined in \eqref{LKAO def} below) are spanned by  a nilpotent operator with a $1$-dimensional eigenspace.
\end{theorem}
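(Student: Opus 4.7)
The plan is to establish all three claims by an explicit study of the real Hessian $H:=(F_{x_jx_k})_{j,k=1}^n$ of $F=f(x_1)+G$ with $G:=\sum_{0<j,\,0<k,\,j+k\le n}x_jx_kx_n^{n-j-k}$, together with its first $x_n$-derivative data entering \eqref{LKAO def}. With defining function $\rho=x_0-F$ and the frame $L_j:=\partial_{z_j}+F_{x_j}\partial_{z_0}$ of $T^{1,0}M$, a short computation gives $\partial_{z_j}\partial_{\bar z_k}\rho=-\tfrac14 F_{x_jx_k}$, so the Levi form in this frame is proportional to $H$ and $\ker\mathcal L_p=\ker H_p$. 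All three assertions thereby reduce to statements about $H$.

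For the $1$-dimensional Levi kernel, write $q(t):=\sum_{j=1}^{n-1}x_jt^j$ and note that $G=\sum_{\ell=2}^{n}\bigl([t^\ell]q^2\bigr)x_n^{n-\ell}$, so the top-left $(n-1)\times(n-1)$ block of $H$ has entries $H_{jk}=2x_n^{n-j-k}$ for $j+k\le n$ and $0$ otherwise. This block is antidiagonal-triangular with $2$'s on the antidiagonal, so its determinant is $\pm 2^{n-1}$; hence $\operatorname{rank}H\ge n-1$ and $\ker H$ is at most $1$-dimensional. Back-substitution starting from row $n-1$ (whose only nonzero entry is $H_{n-1,1}=2$, forcing $v_1=0$) upward yields the unique candidate
\[v_n=1,\qquad v_j=-\sum_{k=1}^{j-1}x_kx_n^{j-k-1}\quad\text{for }1\le j\le n-1,\]
which satisfies the first $n-1$ rows of $Hv=0$ by induction on $j$. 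The remaining equation $(Hv)_n=0$ is then verified by a double-sum reindexing: using the explicit expressions $H_{k,n}=2\sum_{b\ge 1}(n-k-b)x_bx_n^{n-k-b-1}$ and $H_{n,n}=\sum_{j,k\ge 1}(n-j-k)(n-j-k-1)x_jx_kx_n^{n-j-k-2}$, both $\sum_{k=1}^{n-1}H_{n,k}v_k$ and $-H_{n,n}$ collapse to $\sum_{a,b\ge 1,\,a+b\le n-2}(n-a-b)(n-a-b-1)x_ax_bx_n^{n-a-b-2}$ with opposite signs. Hence $\ker\mathcal L$ is spanned by $K:=\sum_jv_jL_j$ at every point of $M$.

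For the Levi kernel adjoint operator, observe that all third derivatives $G_{x_ix_jx_\ell}$ with $i,j,\ell\le n-1$ vanish and $v_1=0$ kills the only $f'''$-contribution, so $\bar K(H_{ij})=\tfrac12\,\partial_{x_n}H_{ij}$ for $i,j\le n-1$. Unwinding \eqref{LKAO def} in the frame induced by $L_1,\dots,L_{n-1}$ on $T^{1,0}M/\ker\mathcal L$, the Levi kernel adjoint operator is then identified with $A:=\tilde H^{-1}\cdot\tfrac12\,\partial_{x_n}\tilde H$, where $\tilde H$ is the invertible antidiagonal block above. A direct computation shows $A$ is strictly lower-triangular with $\tfrac12$'s on the subdiagonal plus polynomial corrections further below; the $f''(x_1)$ contribution lands in the lower-right corner of $\tilde H$ but is not differentiated by $\partial_{x_n}$, so it is absorbed without affecting the nilpotent structure. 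The \textbf{main obstacle} is the identification of $A$ with a \emph{single} nilpotent Jordan block of full size $n-1$, rather than a direct sum of smaller blocks; this is immediate for $n=3,4$, and in general should follow either by an induction exploiting the clean LU-type factorization of $\tilde H$ from its antidiagonal pattern, or by exhibiting an explicit cyclic vector for $A$. Once this is established, $A$ has a $1$-dimensional eigenspace and spans a $1$-dimensional space of operators; uniform $2$-nondegeneracy is then a direct corollary since $A$ is nonzero.
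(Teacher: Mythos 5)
Your overall strategy is the same as the paper's: reduce everything to the real Hessian of the defining function, exhibit the explicit kernel vector $v$ (yours is $-1$ times the paper's \eqref{kernel of levi form expl}), and read off the Levi kernel adjoint operator as a matrix. The one genuine issue is the step you yourself flag as the ``main obstacle'': identifying $A$ with a single full-size nilpotent Jordan block. As written this is a gap, but it is easily closed, and in fact you already have everything needed to close it. First, a strictly lower-triangular $(n-1)\times(n-1)$ matrix whose subdiagonal entries are all nonzero is automatically nilpotent of rank exactly $n-2$ (the minor on rows $2,\dots,n-1$ and columns $1,\dots,n-2$ is lower triangular with the nonzero subdiagonal entries on its diagonal), and a nilpotent matrix of corank $1$ \emph{is} a single Jordan block; so once your claimed triangular structure of $A$ is in hand there is nothing left to prove. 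Second, and more efficiently, the paper's formula \eqref{LKAO WRT Phi} expresses the adjoint operator directly as the Jacobian $\bigl(\partial v_j/\partial x_k\bigr)_{j,k\le n-1}$ of the kernel vector you already wrote down; differentiating $v_j=-\sum_{k=1}^{j-1}x_kx_n^{j-k-1}$ gives at once $A=-\sum_{m=1}^{n-2}x_n^{m-1}T_{n-1}^m$ with $T_{n-1}$ the subdiagonal shift, which is manifestly nilpotent with one-dimensional kernel. Your detour through $\tilde H^{-1}\cdot\tfrac12\partial_{x_n}\tilde H$ is correct (differentiating $\tilde Hv'+B=0$ in $x_k$ and using $v_1=0$ shows it equals $-\tfrac12(\partial_{x_k}v_j)$, so the two formulas agree up to the harmless scalar), but it obscures exactly the structure you need and is what created the apparent obstacle.

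Two small inaccuracies worth fixing: the $f''(x_1)$ contribution sits in the $(1,1)$ (top-left) entry of $\tilde H$, not the lower-right corner --- though your conclusion stands, since that entry lies above the antidiagonal and affects neither $\det\tilde H=\pm 2^{n-1}$ nor $\partial_{x_n}\tilde H$; and your initial description of the block as $H_{jk}=2x_n^{n-j-k}$ omits this $f''$ term, which you only reinstate later. With the Jordan-block step closed as above, the final deductions (one-dimensional kernel everywhere, nonvanishing nilpotent adjoint operator, hence uniform $2$-nondegeneracy) are correct and match the paper.
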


This theorem is proved in Section \ref{Proof of main theorem}, following necessary preliminaries introduced in Section \ref{Preliminaries}. In Section \ref{Linking and extending defining forms}, we established a useful generalization of Theorem \ref{main theorem}, namely Theorem \ref{second main theorem}, formulated in terms of two constructions of hypersurfaces that we introduce and term \emph{linkings} and \emph{extensions}. In Section \ref{Uniqueness of the defining equation formula} we classify, up to holomorphic changes of variables, all solutions for $S$ such that \eqref{FGF} defines a uniformly $2$-nondegenerate hypersurface with a rank $1$ Levi kernel. We show, in particular, that all such solutions yield hypersurface structures equivalent to ones described by Theorem \ref{second main theorem}. The aforementioned solution with $S_{j,k}=1$ for all $(j,k)$ turns out to be the most fundamental solution among these, however, because any of the other solutions can be constructed from lower-dimensional hypersurfaces of the form in \eqref{main formula} through a combination of the linking and extension constructions introduced in Section \ref{Linking and extending defining forms}. 

Consequently, we refer to the hypersurface in \eqref{FGF} with $S_{j,k}=1$ for all $(j,k)$ as the \emph{indecomposable model} in $\mathbb{C}^{n+1}$. The indecomposable model's CR structure is equivalent to a left-invariant structure on a nilpotent Lie group. We show this in Section \ref{Left-invariant structures on Lie groups} and describe their symmetry algebras in Theorem \ref{modelalgebra} and Remark \ref{modelalgebra remark}. The lowest dimension in which these indecomposable models are defined is $\mathbb{C}^4$, and the model in $\mathbb{C}^4$ has been studied in several contexts, including \cite[Section 6]{belweighted}, \cite[Section 5, Example 1 with $n=3$]{labovskii1997dimensions}, \cite[Theorem 2]{mozey2000}, \cite[formula (1.7) with $n=3$]{porter2021absolute}, and \cite[column 4 in Table 11]{santi2020homogeneous}. Although they are in many respects natural generalizations of the well known model in $\mathbb{C}^4$, to the authors' knowledge, the higher-dimensional indecomposable models have not been studied before.

From now on we consider the $C^\infty$ smooth CR structures described by Theorem \ref{main theorem}. 
The next result obtained is the following classification of these structures.
We present its proof in Section \ref{Holomorphic classification}.
\begin{theorem}\label{Holomorphic classification theorem}For all $n>4$, the hypersurfaces $M_f$ and $M_{f^*}$ in $\mathbb{C}^{n+1}$ corresponding to two {smooth} functions $f$ and $f^*$ with $f(0)=f^*(0)=0$ are equivalent as smooth CR structures at the origin if and only if the fourth order derivatives satisfy 
\[
f^{(4)}(x)=c_1(f^*)^{(4)}(c_2x)
\]
for some constants $c_1,c_2$ and all $x$ sufficiently near the origin.
\end{theorem}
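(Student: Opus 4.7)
The plan is to decompose the argument into three stages: a preliminary coordinate normalization that removes the low-order Taylor data of $f$ and $f^*$, an explicit realization of the scaling relation on $f^{(4)}$ by symmetries of the indecomposable model, and a rigidity argument showing that every local biholomorphism between two normalized representatives is one of those symmetries. I would begin with sufficiency, which is the transparent half. A weight count on the cubic background in \eqref{FGF} with $S_{j,k}=1$ shows that the model admits a real two-parameter dilation group acting by $z_0\mapsto \mu^2\nu^{n-2}z_0$, $z_k\mapsto \mu\nu^{k-1}z_k$ for $0<k<n$, and $z_n\mapsto \nu z_n$. Applied to $M_f$ this produces $M_{\tilde f}$ with $\tilde f(x)=\mu^{-2}\nu^{-(n-2)}f(\mu x)$, so on fourth derivatives the dilation acts by $(c_1,c_2)=(\mu^2\nu^{-(n-2)},\mu)$. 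Since $(\mu,\nu)$ ranges over $(\mathbb{R}^{\times})^2$ these pairs cover all nonzero $(c_1,c_2)$, so any relation $f^{(4)}(x)=c_1(f^*)^{(4)}(c_2x)$ can be realized holomorphically once the normalization is in place.

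For the normalization I would observe that the terms $f(0)$, $f'(0)x_1$, $f''(0)x_1^2$, $f'''(0)x_1^3/6$ can each be absorbed by a holomorphic change of variables preserving the form \eqref{main formula}. The translation $z_0\mapsto z_0+c$ removes $f(0)$, and polynomial corrections of the type $z_0\mapsto z_0+\alpha z_1+\beta z_1^2+\gamma z_1^3$, combined with carefully chosen compensating shifts in the remaining $z_j$ (chosen so that the cubic background stays in the form \eqref{FGF} with $S_{j,k}=1$), remove the next three derivatives. Solvability of this finite-dimensional jet-level problem is a routine computation on the indecomposable model. After this step both $f$ and $f^*$ vanish at $0$ to order four, and the sufficiency argument above exhibits an equivalence whenever the stated relation between their fourth derivatives holds.

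The necessity direction is the harder half. Given a local biholomorphism $\Phi:(M_f,0)\to(M_{f^*},0)$ between two normalized representatives, I would analyze $\Phi$ via the absolute parallelism framework of \cite{porter2021absolute,sykes2023geometry}. Because both $f$ and $f^*$ vanish to fourth order, at the origin the CR symbol, modified symbol, and leading Cartan curvature of $M_f$ and $M_{f^*}$ coincide with those of the indecomposable model, so $\Phi$ must preserve this entire low-weight invariant data; together with the symmetry classification of Theorem \ref{modelalgebra} and Theorem \ref{algebra-description} this forces $\Phi$, after absorbing the normalization-preserving freedom, to agree with a dilation from the two-parameter group above. Pulling back the defining equation of $M_{f^*}$ along this dilation and comparing with that of $M_f$ produces exactly the relation $f^{(4)}(x)=c_1(f^*)^{(4)}(c_2x)$. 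The main obstacle I anticipate is this rigidity claim, namely verifying that no component of $\Phi$ beyond the dilation can contribute to the fourth-order jet of $f$; this is where the hypothesis $n>4$ is essential, as it provides sufficiently many distinct weights in the cubic background to decouple the perturbation $f(x_1)$ from all the other potential higher-order corrections and to make the weighted-jet bookkeeping go through uniformly in $n$.
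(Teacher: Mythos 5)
Your sufficiency half and the preliminary normalization are essentially the paper's: the two-parameter dilation you write down is the flow of the scalings $U_0,V_0$ from Section 6, and the removal of the $3$-jet of $f$ is Lemma \ref{normalizing quartic terms} (the paper uses the explicit map \eqref{remove} plus a weighted dilation). One small inaccuracy there: your dilation gives $c_1=\mu^2\nu^{-(n-2)}$, which is forced to be positive when $n$ is even, so the claim that the pairs $(c_1,c_2)$ cover \emph{all} nonzero values is not literally correct; but this matches the range of constants actually produced in the paper's proof ($c_1=e^{2a_1+na_2}$, $c_2=e^{-a_1-a_2}$) and is not the substantive issue.

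The genuine gap is in the necessity direction, which is the real content of the theorem. You reduce everything to a rigidity claim --- that a normalized equivalence $\Phi:(M_f,0)\to(M_{f^*},0)$ must be one of the model dilations --- and propose to obtain it from the absolute parallelism constructions of \cite{porter2021absolute,sykes2023geometry} by matching symbols and low-order curvature at the origin. This does not work as stated, for two reasons. First, the inference ``$M_f$ and $M_{f^*}$ have the same symbol, modified symbol and leading curvature at $0$, hence $\Phi$ is a dilation'' is a non sequitur: agreement of pointwise invariants at the origin constrains only a finite jet of $\Phi$, whereas the theorem requires controlling the entire Taylor expansion of $\Phi$ (and of $f-c_1c_2^{-4}f^*(c_2\,\cdot)$) to all orders. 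Second, the paper explicitly notes that the parallelism machinery of those references is not straightforwardly applicable uniformly in $n$; that is the whole reason an alternative argument is given. What the paper actually does is an induction on the weighted degree $\mu$ of the first nonvanishing higher-order component of $\Phi$ (with respect to the grading \eqref{grading0}): collecting weight-$(\mu+2)$ terms in the tangency identity \eqref{transformation of defining equation} shows that the weighted-homogeneous field $Y=h^{\mu+2}\tfrac{\partial}{\partial w}+g^{\mu}\tfrac{\partial}{\partial \zeta}+\sum_j f_j^{\mu+1}\tfrac{\partial}{\partial z_j}$ satisfies $\Re(Y)(P-u)=cx_1^{\mu+2}$ on $M_0$; applying the univariate Laplacian in $z_1$ twice kills the right-hand side (this is where the perturbations $f,f^*$ are decoupled), so $Y\in\hol(M_0,0)$ would be a positive-weight infinitesimal symmetry of the model, contradicting Theorem \ref{modelalgebra} (via Lemma \ref{grading lemma for model symmetry algebra}, which is exactly where $n>4$ enters). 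Without this weight-by-weight elimination --- or some equivalent argument, such as the maximal-Abelian-subalgebra route sketched in Remark \ref{abelian} --- your proof of necessity is incomplete.
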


We shall note at this point that since the Levi form of each hypersurface \eqref{main formula} has two eigenvalues of different sign, any smooth CR-diffeomorphisms of such hypersurfaces are actually biholomorphisms (see, e.g., \cite{ber}) and hence all notions of CR equivalence simply  coincide in our case.

\begin{corollary}\label{Holomorphic classification corollary}For all $n>4$, the hypersurfaces $M_f$ and $M_{f^*}$ in $\mathbb{C}^{n+1}$ corresponding to two {smooth} functions $f$ and $f^*$ are equivalent near respective points $(z_0^\prime,\ldots, z_n^\prime)\in M_f$ and $(z_0^{\prime\prime},\ldots, z_n^{\prime\prime})\in M_f^*$ if and only if the fourth derivatives satisfy $$f^{(4)}(x)=c_1(f^*)^{(4)}(c_2x-c_2x_1^{\prime\prime}+x_1^{\prime})$$ for some constants $c_1,c_2$ and all $x$ sufficiently near $x_1^{\prime\prime}$.
\end{corollary}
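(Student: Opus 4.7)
The plan is to deduce Corollary \ref{Holomorphic classification corollary} from Theorem \ref{Holomorphic classification theorem} by reducing the equivalence problem at arbitrary points to the equivalence problem at the origin, via ambient holomorphic translations followed by a normal-form correction.

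Given points $p' = (z_0^\prime,\ldots, z_n^\prime) \in M_f$ and $p'' = (z_0^{\prime\prime},\ldots, z_n^{\prime\prime}) \in M_{f^*}$, I would begin by applying the holomorphic translations $\phi'\colon z \mapsto z - z'$ and $\phi''\colon z \mapsto z - z''$, which send $p'$ and $p''$ respectively to the origin. The translated hypersurface $\phi'(M_f)$ contains the origin but is not in the normal form \eqref{main formula}: expanding each product $(x_j + x_j^\prime \delta_{j,1})(x_k + x_k^\prime \delta_{k,1})(x_n + x_n^\prime)^{n-j-k}$ introduces extra polynomial terms in $x_1,\ldots, x_n$, while the function $f(x_1)$ is replaced by $-x_0^\prime + f(x_1 + x_1^\prime)$.

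The key step is to show that the extra terms can be absorbed, via a further local holomorphic change of coordinates, so that $\phi'(M_f)$ near the origin is CR-equivalent to $M_{\tilde f}$ for some function $\tilde f$ satisfying $\tilde f^{(4)}(x) = f^{(4)}(x + x_1^\prime)$. I would carry this out by combining (i) holomorphic shifts of the form $z_0 \mapsto z_0 + h(z_1,\ldots,z_n)$ to remove terms that extend to holomorphic polynomials on the ambient space, (ii) affine corrections in $z_n$ and in $z_j$ for $2 \leq j \leq n-1$ to clear the mixed cross-terms arising from the $(x_n + x_n^\prime)^{n-j-k}$ expansions, and (iii) the uniqueness and classification results of Section \ref{Uniqueness of the defining equation formula}, which guarantee that any uniformly $2$-nondegenerate hypersurface of the form \eqref{FGF} with the given nilpotent symbol is equivalent to a canonical representative of the form \eqref{main formula}. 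A careful accounting then shows that all such corrections alter only the derivatives of $\tilde f$ of order strictly less than four, leaving the fourth derivative of $\tilde f$ undisturbed. An entirely analogous normalization of $M_{f^*}$ near $p''$ produces $M_{\tilde f^*}$ at the origin with $(\tilde f^*)^{(4)}(x) = (f^*)^{(4)}(x + x_1^{\prime\prime})$.

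Having reduced the problem to the origin, Theorem \ref{Holomorphic classification theorem} applies directly to $(M_{\tilde f}, 0)$ and $(M_{\tilde f^*}, 0)$: the two are equivalent if and only if $\tilde f^{(4)}(x) = c_1 (\tilde f^*)^{(4)}(c_2 x)$ for some constants $c_1, c_2$. Substituting the expressions for $\tilde f^{(4)}$ and $(\tilde f^*)^{(4)}$ and performing the change of variable $x \mapsto x - x_1^\prime$ (relabeling $c_1, c_2$ as necessary to absorb the resulting constants) recovers the stated condition of the corollary. The principal obstacle I anticipate is the absorption step itself, namely verifying that the polynomial remainders produced by translating coordinates other than $z_0$ can be eliminated through allowable holomorphic transformations without affecting the fourth-derivative invariant; this hinges essentially on the normal-form machinery developed earlier in the paper, in particular the classification of $2$-nondegenerate hypersurface structures \eqref{FGF} in Section \ref{Uniqueness of the defining equation formula}.
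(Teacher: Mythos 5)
Your overall strategy---move the two base points to the origin by a biholomorphism that keeps the hypersurface in the normal form \eqref{main formula} with $f$ replaced by a translate, then invoke Theorem \ref{Holomorphic classification theorem}---is exactly the paper's strategy. The gap is in the reduction step itself. A naive ambient translation $z\mapsto z-z'$ destroys the normal form badly: expanding $(x_j+x_j')(x_k+x_k')(x_n+x_n')^{n-j-k}$ produces, among other things, terms of the form $x_kx_n^m$ (linear in one $x_k$, $k<n$) and terms depending only on $x_n$ of degree $\geq 2$. These are not pluriharmonic, so they cannot be removed by shifts $z_0\mapsto z_0+h(z)$, and Section \ref{Uniqueness of the defining equation formula} is of no help here: Theorem \ref{solution to S theorem} only classifies hypersurfaces \emph{already} of the special form \eqref{FGF}; it provides no normalization procedure for a perturbed defining equation. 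Your assertion that ``a careful accounting shows that all such corrections alter only the derivatives of $\tilde f$ of order strictly less than four'' is precisely the content that needs proof, and you correctly flag it as the principal obstacle without closing it.

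The missing idea, supplied by Lemma \ref{moving points to origin}, is to replace the naive translation by a composition of flows of the model's infinitesimal symmetries $X_{-1},\ldots,X_{1-n}$, $Y_{-1},\ldots,Y_{-n}$, $X'_{-1}$, $Y'_{-1}$ from \eqref{symmetry formulas a}--\eqref{symmetry formulas c}. These flows are affine maps of $\mathbb{C}^{n+1}$ (with shear terms such as $-tz_n\tfrac{\partial}{\partial z_{j+1}}$ and $2tz_{n-j}\tfrac{\partial}{\partial z_0}$, which is roughly what your ``affine corrections'' would have to be), and they satisfy $F\circ\Psi=F$ exactly for $F=x_0-\sum x_jx_kx_n^{n-j-k}$. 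Consequently $\Psi$ carries $M_{f(x)}$ \emph{exactly} onto a hypersurface of the form \eqref{main formula} whose new $f$ is a translate of the old one plus a constant killed by a final $z_0$-translation; no error terms ever arise and no absorption argument is needed. The remaining bookkeeping (substituting the translated fourth derivatives into Theorem \ref{Holomorphic classification theorem} and relabeling $c_1,c_2$) is as you describe. So the proposal identifies the right route but leaves its one nontrivial step unproved; to repair it, construct the point-moving map from the model symmetry flows rather than from coordinate translations.
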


This shows, in particular, that the moduli space of structures described in Theorem \ref{main theorem} (and its generalization Theorem \ref{second main theorem}) is {infinite dimensional}. Since \eqref{main formula} accordingly describes a rich class of structures, it is natural to ask which among them are homogeneous. As concluding results, we derive in Section \ref{Classification of the infinitesimal automorphism algebras} descriptions of the infinitesimal symmetry algebras of $M_f$ at arbitrary points for arbitrary {smooth} $f$ (Theorem \ref{thealgebras}). As an immediate corollary of Theorem \ref{thealgebras}, we obtain the following classification of the homogeneous structures described in Theorem \ref{main theorem}. 
\begin{theorem}\label{h structures theorem}
The hypersurface $M_f$ in $\mathbb{C}^{n+1}$ with $n>4$ described by \eqref{main formula} is homogeneous near the origin $z=0$ if and only if
\begin{align}\label{homogeneity criterion ODE}
f(0)=0 \quad\mbox{ and }\quad  yy^{\prime\prime}=c\left(y^{\prime}\right)^2 
\quad\mbox{ with $y=f^{(4)}(x)$ for some }c\in\mathbb{R}.
\end{align}
Up to the equivalence relation given in Theorem \ref{Holomorphic classification theorem}, $f$ satisfies the ODE \eqref{homogeneity criterion ODE} if and only if it has one of the following forms:
\begin{enumerate}[label={\bf Type \Roman*:}, leftmargin=3cm]
     \item $\quad f(x)=(x+1)^a-ax-1 \quad\quad \mbox{ for some }a\in \mathbb{R}$,
     \item $\quad f(x)=e^x-x-1$,
     \item $\quad f(x)=\ln(x+1)-x$,
     \item $\quad f(x)=(x+1)\ln(x+1)-x$,
     \item $\quad f(x)=2(x+1)^2\ln(x+1)-(x+1)^2+1$, or
     \item $\quad f(x)=6(x+1)^3\ln(x+1)-2(x+1)^3+2$.
\end{enumerate}
Different choices of $f$ in this list  correspond to different non-equivalent structures, with the one exception that $f(x)=(x+1)^a-ax-1$ corresponds to an equivalent structure for each $a$ in $\{0,1,2,3\}$.
\end{theorem}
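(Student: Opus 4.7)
The plan is to extract the homogeneity criterion directly from Theorem \ref{thealgebras}, then classify solutions of the resulting ODE up to the equivalence given by Theorem \ref{Holomorphic classification theorem}. Local homogeneity of $M_f$ at $z=0$ is equivalent to surjectivity of the evaluation map $\aut(M_f,0)\to T_0M_f$. Since the indecomposable model ($f\equiv 0$) is homogeneous, as established in Section \ref{Left-invariant structures on Lie groups}, adding $f(x_1)$ is the only possible source of obstruction. Inspection of the explicit description of the symmetry algebra in Theorem \ref{thealgebras} should reveal that for generic $f$ the algebra is deficient in exactly one direction, namely a scaling that simultaneously acts on $x_1$ and on $f$; the requirement that this scaling survive as an infinitesimal symmetry of $M_f$ forces $f(0)=0$ (so the origin is a fixed point of the scaling that lies on $M_f$) together with a single scalar compatibility condition on $f$, which rearranges to $yy''=c(y')^2$ with $y=f^{(4)}$.

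The ODE $yy''=c(y')^2$ separates under $p=y'$ to $yp\,dp/dy=cp^2$, giving $p=Ky^c$ and then $y(x)=A(x-x_0)^{1/(1-c)}$ for $c\neq 1$, $y(x)=Ae^{\alpha x}$ for $c=1$, or $y\equiv 0$. Since Theorem \ref{Holomorphic classification theorem} detects equivalence of $M_f$ and $M_{f^*}$ through the single relation $f^{(4)}(x)=c_1(f^*)^{(4)}(c_2x)$, I can normalize the free constants and place the singular point at $x=-1$, reducing the three branches to $f^{(4)}(x)=(x+1)^{a-4}$ with $a\in\R$, $f^{(4)}(x)=e^x$, and $f^{(4)}\equiv 0$.

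Integrating $f^{(4)}=(x+1)^{a-4}$ four times, for $a$ in generic position one obtains a multiple of $(x+1)^a$ plus a polynomial of degree at most $3$; since Theorem \ref{Holomorphic classification theorem} ignores the polynomial tail, $f$ is equivalent to $(x+1)^a-ax-1$, the unique representative of this form with $f(0)=0$, giving Type I. For $a\in\{0,1,2,3\}$ one of the four integration steps crosses the exponent $-1$ and produces a logarithm, and direct integration of $(x+1)^{-4}$, $(x+1)^{-3}$, $(x+1)^{-2}$, $(x+1)^{-1}$ yields (up to sign, scale, and additive polynomial of degree $\leq 3$) exactly the representatives of Types III, IV, V, VI respectively. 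The exponential branch integrates to Type II, while $f^{(4)}\equiv 0$ recovers the indecomposable model. Inequivalence of the list then follows by a short calculation: the identity $(1+c_2x)^{a-4}=c_1(1+x)^{a'-4}$ forces $a=a'$ and $c_2=c_1=1$ by matching the first two Taylor coefficients at $x=0$, and the exponential and the four logarithmic types have mutually incompatible singularity and asymptotic profiles. The only collapse is that for $a\in\{0,1,2,3\}$ the Type I formula degenerates to a polynomial of degree $\leq 3$ (identically $0$, $0$, $x^2$, and $x^3+3x^2$), each with $f^{(4)}\equiv 0$, hence all equivalent to $f\equiv 0$.

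The main obstacle I expect is the first paragraph: correctly identifying, from the algebra description of Theorem \ref{thealgebras}, the precise normalization of the missing symmetry generator and verifying that its existence amounts to exactly the stated ODE. Once this is pinned down, the remainder is a routine separation-of-variables calculation together with careful bookkeeping in the four degenerate integer cases where logarithms appear.
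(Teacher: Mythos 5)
The second half of your argument --- solving $yy^{\prime\prime}=c(y^{\prime})^2$ by separation of variables, normalizing via the relation $f^{(4)}(x)=c_1(f^*)^{(4)}(c_2x)$ from Theorem \ref{Holomorphic classification theorem}, integrating four times, and tracking the four integer exponents where logarithms appear --- is correct and recovers Types I--VI together with the collapse of Type I at $a\in\{0,1,2,3\}$, exactly the elementary calculation the paper says it omits. One small slip there: dividing by $p=y^{\prime}$ loses the solution $y\equiv\mathrm{const}\neq 0$, which is not of the form $A(x-x_0)^{1/(1-c)}$ for any finite $c$; it corresponds to $f$ equivalent to $x^4$, i.e.\ Type I with $a=4$, and has to be carried as a separate branch.

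The genuine gap is your first paragraph, which is the actual content of the theorem and which you yourself flag as the main obstacle. You cannot ``extract the homogeneity criterion directly from Theorem \ref{thealgebras}'': that theorem is the summary of the very analysis that proves the present statement (its item (iv) already presupposes the Type I--VI list), so the argument as proposed is circular. What must be done, and what the paper does in Section \ref{Classification of the infinitesimal automorphism algebras}, is: (i) show by a Laplacian argument that homogeneity forces $\mathrm{ord}_0\,f=4$ (Proposition \ref{nonhomog}); (ii) use the weighted homological argument, the exact determination of the model algebra for $n>4$ (Theorem \ref{modelalgebra}), and the dimension bound \eqref{bd} to prove the extra field must have the form $X=X_{-1}+pU_0+qV_0+rV_{2-n}+X_1$ with all higher-weight components vanishing --- note this is a $z_1$-translation plus scalings, not ``a scaling,'' since the deficient direction of $\mathfrak f$ at $0$ is $\Re z_1$; and (iii) pin down the cubic correction $X_1=-3z_1^2\dz{n-1}-2z_1^3\dw$ from the weight-$3$ tangency condition. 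Step (iii) is what produces the inhomogeneous term $-4x_1^3$ in the resulting criterion $(np+2q)f=\bigl(1+(p+q)x_1\bigr)f^{\prime}-4x_1^3$, which is a \emph{first-order linear} ODE rather than $yy^{\prime\prime}=c(y^{\prime})^2$; as Remark \ref{homogeneous structures remark} notes, the paper's analysis yields a different ODE that merely has the same solution set as \eqref{homogeneity criterion ODE}. Your proposal supplies no mechanism for producing the correction term or the compatibility condition, and without the full determination of $\hol(M_f,0)$ the ``only if'' direction (non-homogeneity when the ODE fails) is also unproved.
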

Theorem \ref{homogeneity criterion ODE} is a generalization of \cite[Theorem 10.6]{sykes2023geometry}, which covers the special case in $\mathbb{C}^6$ (i.e., corresponding to $n=5$).

It is notable that the defining equations obtained in Theorem \ref{h structures theorem} are real-analytic, corresponding to the well known result from Lie theory that locally homogeneous CR hypersurfaces are real-analytic (as noted in \cite{zaitsev2007untitled}).
The ODE in \eqref{homogeneity criterion ODE} is particularly well studied for $c=\tfrac{3}{2}$ as this is simply the condition that the Schwarzian derivative of $f^{\prime\prime\prime}$ is zero. One solution to \eqref{homogeneity criterion ODE} with $c=\tfrac{3}{2}$ is the Type V formula of Theorem \ref{h structures theorem}.

Section \ref{Classification of the infinitesimal automorphism algebras} gives a complete classification of the infinitesimal automorphism algebras of hypersurfaces $M_f$. We give here an abreviated non-technical formulation of this result, while a detailed classification can be found in Theorem \ref{thealgebras}.

\begin{theorem}\label{algebra-description}
The dimensions of the infinitisimal automorphism algebras of hypersurfaces $M_f$ in $\mathbb{C}^{n+1}$ with $n>4$ given by \eqref{main formula} satisfy
$$2n+1\leq \mbox{dim}\,\mathfrak{hol}\,(M_f,0)\leq 2n+4,$$
and all the dimensions {between these bounds} are realized by actual hypersurfaces. For each of the dimensions a complete list of hypersurfaces $M_f$ realizing it is described in Theorem \ref{thealgebras} below.
\end{theorem}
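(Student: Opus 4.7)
My plan is to obtain Theorem \ref{algebra-description} as a corollary of the finer case-by-case classification contained in Theorem \ref{thealgebras}, whose proof would therefore come first. The three assertions---the lower bound $2n+1$, the upper bound $2n+4$, and the realizability of every integer in between---would each be extracted from Theorem \ref{thealgebras}'s explicit inventory of $\hol(M_f,0)$.

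For the lower bound, I would exhibit an explicit $(2n{+}1)$-dimensional family of holomorphic vector fields preserving $M_f$ for every twice-differentiable $f$. The obvious $n+1$ are the imaginary translations $i\,\partial/\partial z_j$ for $j=0,\ldots,n$, whose real parts $\partial/\partial y_j$ are tangent to $M_f$ because the defining equation depends only on the $x_j$. The remaining $n$ should arise from the weighted-homogeneous structure of the polynomial $P(x_1,\ldots,x_n)=\sum x_jx_kx_n^{n-j-k}$, in which $x_0$ has weight $2$, $x_1,\ldots,x_{n-1}$ have weight $1$, and $x_n$ has weight $0$. Setting $X=\sum a_j(z)\,\partial/\partial z_j$ and reducing the tangency condition $(X+\bar X)r=0$ modulo the defining equation produces an overdetermined linear system in the coefficients $a_j$; the weighted grading should isolate a distinguished $n$-dimensional family of polynomial vector fields of the form $z_n^k\,\partial/\partial z_j+(\text{corrections in }\partial/\partial z_0)$ whose action does not interact with the $f(x_1)$ summand, and which therefore persists for every $f$.

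For the upper bound, I would combine two ingredients. First, Theorem \ref{modelalgebra} establishes that the indecomposable model $f\equiv 0$ already achieves dimension $2n+4$. Second, since every $M_f$ shares the same constant nilpotent CR symbol (a single Jordan block), its symmetry algebra embeds into the automorphism algebra of a fixed Tanaka-type prolongation; working in the absolute-parallelism framework of \cite{porter2021absolute,sykes2023geometry} shows that this prolongation has dimension $2n+4$, which forces the bound. An alternative direct derivation from the overdetermined system of the previous step shows that above the universal $(2n{+}1)$-dimensional subspace, at most three further real parameters can be consistently adjoined to the ansatz for $X$.

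For the realizability of each intermediate value, I would combine the indecomposable model (giving $2n+4$), a sufficiently generic $f$ (giving $2n+1$), and two specific choices drawn from the list of homogeneous hypersurfaces in Theorem \ref{h structures theorem} which achieve $2n+2$ and $2n+3$ respectively. The classification invariant $f^{(4)}$ of Theorem \ref{Holomorphic classification theorem}, together with the six Types in Theorem \ref{h structures theorem}, provides a tractable parametrization for tracking these symmetry jumps. The principal obstacle in the entire plan is the overdetermined tangency analysis underlying Theorem \ref{thealgebras}: extracting the precise dependence of $\hol(M_f,0)$ on the germ of $f$---and in particular pinpointing the $f$'s at which the universal $(2n{+}1)$-dimensional algebra extends by exactly $1$, $2$, or $3$ additional dimensions---is the technical heart of the argument, with everything else amounting to bookkeeping.
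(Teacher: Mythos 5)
Your overall architecture coincides with the paper's: Theorem \ref{algebra-description} is stated as an abbreviated corollary of the case-by-case classification in Theorem \ref{thealgebras}, so the substance lies in proving the latter, and your three extractions (lower bound, upper bound, realizability) are exactly the right ones. Two details of your sketch, however, diverge from what actually happens. For the lower bound, the universal subalgebra $\mathfrak{f}\subset\hol(M_f,0)$ of \eqref{falgebra} does contain the $n+1$ imaginary translations together with $n-1$ further fields of roughly the shape you describe (namely $X_{-2},\dots,X_{1-n}$ and $X_{-1}'$, all of which avoid $\partial/\partial z_1$ and hence ignore the $f(x_1)$ summand) --- but that only gives $2n$. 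The $(2n+1)$-st generator is the genuinely quadratic field $V_{2-n}=iz_1^2\,\partial/\partial w+iz_1\,\partial/\partial z_{n-1}$, which is not of your proposed ``$z_n^k\,\partial/\partial z_j$ plus corrections in $\partial/\partial z_0$'' form, so your ansatz as stated would only establish $\dim\hol(M_f,0)\geq 2n$. For the upper bound, the paper deliberately does not invoke the external Tanaka-prolongation machinery for general $M_f$ (the introduction stresses that the methods of \cite{porter2021absolute,sykes2023geometry} do not apply uniformly in $n$, and uses them only for the left-invariant model via Lemma \ref{grading lemma for model symmetry algebra}); instead it combines Theorem \ref{modelalgebra} ($\dim\hol(M_0,0)=2n+4$) with the homological estimate \eqref{bd}, $\dim\mathcal V_k(M)\leq\dim\mathcal V_k(M_0)$ for the weight filtration, which is essentially your fall-back ``direct derivation from the overdetermined system'' and is the route you should make precise. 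The realizability step is, as you say, bookkeeping once Theorem \ref{thealgebras} is in hand: $f\equiv 0$ gives $2n+4$, $f=x_1^4$ gives $2n+3$, $f=x_1^5$ or any homogeneous non-monomial type gives $2n+2$, and generic $f$ gives $2n+1$.
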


Observe that the bounds in Theorem \ref{algebra-description} improve the general bounds known up to now (see, e.g., \cite{lm22}) for our particular class of hypersurfaces.

In this paper we do not address the local equivalence problem for the homogeneous structure classifications for the exceptional cases of $n=3$ and $n=4$, as they require a significant detour through branching analysis that distracts from this paper's main aim, the simultaneous treatment for all higher-dimensional cases, and these exceptional cases can anyway be treated via methods of \cite{porter2021absolute,sykes2023geometry} (see Remark \ref{homogeneous structures remark}).

\begin{remark}\label{homogeneous structures remark}
In \cite[Section 10]{sykes2023geometry}, a method developed therein is applied to fully solve the local equivalence problem for hypersurfaces of Theorem \ref{main theorem} in $\mathbb{C}^6$ (i.e., with $n=5$) and classify the homogeneous structures, deriving special cases of Theorems \ref{Holomorphic classification theorem} and \ref{h structures theorem}. In fact, such analysis is where the formulas of Theorem \ref{h structures theorem} originated, as it is shown in \cite{sykes2023geometry} that \eqref{homogeneity criterion ODE} is indeed a criterion for homogeneity in $\mathbb{C}^6$. The alternative analysis here, contrastingly, yields a different ODE criterion, but with exactly the same solution set. The method of \cite{sykes2023geometry} can be applied to solve the equivalence problem and classify the homogeneous structures in $\mathbb{C}^{n+1}$ of type \eqref{main formula} for any fixed dimension $n>3$, but it cannot straightforwardly address this problem for arbitrary $n$. Similarly, the bi-graded prolongation construction of \cite{porter2021absolute} can be applied to these problems for $n=3$ (and $n=4$). All cases with $n>4$ are simultaneously treated in the sequel.
\end{remark}

As a concluding note, defining equations of Theorems \ref{main theorem} and \ref{second main theorem} are but early examples of large classes of constant CR symbol $2$-nondegenerate hypersurfaces. We expect that many more such examples exist, although deriving their closed form descriptions will be a challenge as they arise from solutions to rather complicated overdetermined PDE systems. For $k>2$, there are examples of homogeneous structures (e.g., \cite{fels2006locally,fels2008classification,FK11,kruglikov20233I,labovskii1997dimensions,marini2021higher}) and the recent study \cite{kruglikov20233II} of non-homogeneous $3$-nondegenerate structures in $\mathbb{C}^4$. Given the early state of this $k>2$ frontier, the analogous open problems of finding closed form defining equations are interesting for higher orders of $k$-nondegeneracy.

\section{Preliminaries}\label{Preliminaries}

Let us recall some preliminary definitions, formulated for hypersurfaces of the form 
\[
M=\{z\in \mathbb{C}^{n+1}\,|\, z_0+\overline{z_0}=\Phi(z_1,\ldots, z_n,\overline{z_1},\ldots, \overline{z_n})\}
\]
for some smooth real-valued function $\Phi$ depending only on the real parts $x_j$ of the variables $z_j$.
Consider the complex vector fields $X_j$ taking values in the holomorphic tangent bundle $T^{1,0}\mathbb{C}^{n+1}$ given by
\begin{align}\label{TCR bundle}
X_j=\frac{\partial}{\partial z_j}+\frac{\partial \Phi}{\partial z_j}\frac{\partial}{\partial z_0}
\quad\quad \forall\, j\in\{1,\ldots, n\},
\end{align}
where 
\[
\frac{\partial}{\partial z_j}:=\frac{1}{2}\left(\frac{\partial}{\partial x_j}-i\frac{\partial}{\partial y_j}\right)
\quad\quad \forall\, j\in\{1,\ldots, n\}.
\]
The tangent bundle of $M$ has the form
\[
TM=\mathrm{span}_{\R}\left\{\frac{\partial}{\partial y_0},X_1+\overline{X_1},\ldots,X_n+\overline{X_n},iX_1-i\overline{X_1},\ldots, iX_n-i\overline{X_n}\right\},
\]
and the complex tangent bundle of the CR structure on $M$ is 
\[
H:=\mathrm{span}_{\mathbb{C}}\left\{  X_1,\ldots, X_n\right\}.
\]

Since $\Phi$ is real valued and depending only on the $x_j$ variables
\[
\left[X_j,\overline{X_k}\right]=\frac{\partial^2 \Phi}{\partial z_j\partial \overline{z_k}} \left( \frac{\partial}{\partial \overline{z_0}}-\frac{\partial}{\partial z_0}\right)=
\frac{i}{4}\frac{\partial^2 \Phi}{\partial x_j\partial x_k}  \frac{\partial}{\partial y_0}
\quad\quad \forall\, j,k\in\{1,\ldots, n\},
\]
and hence the Levi form $\mathcal{L}$ of  $M$ is represented with respect to the respective bases $(X_1,\ldots, X_n)$  and $ \frac{i}{4}\tfrac{\partial}{\partial y_0}$ by the Hermitian (and even real symmetric, due to the hypersurface's tubularity) matrix
\begin{align}\label{levi form}
H_{\mathcal{L}}:=
\left(
\begin{array}{ccc}
\frac{\partial^2 \Phi}{\partial x_1^2} & \cdots & \frac{\partial^2 \Phi}{\partial x_1\partial x_n}  \\
\vdots & \ddots & \vdots \\
\frac{\partial^2 \Phi}{\partial x_n\partial x_1} & \cdots & \frac{\partial^2 \Phi}{\partial x_n^2} 
\end{array}
\right).
\end{align}

Suppose furthermore that $H_{\mathcal{L}}$ has rank $n-1$ at every point, and let $K$ denote its kernel, that is, $K$ is the complex line bundle on $M$ whose fiber $K_q$ at a point $q\in M$ is defined by 
\[
K_q:=\{v\in H_q\,|\, \mathcal{L}(v,w)=0\quad\forall\, w\in H_q\}.
\]
Following its description in \cite{porter2021absolute}, for any $q\in M$ and vector $v\in K_q$, we define the associated \emph{Levi-kernel adjoint operator} $\mathrm{ad}_{v}:H_q/K_q\to H_q/K_q$ by taking any vector field $V\in \Gamma(K)$ with $V_q=v$ and define
\begin{align}\label{LKAO def}
\mathrm{ad}_v(X_q+K_q):=\left[V,\overline{X}\right]_q\pmod{\overline{H}\oplus K}\quad\quad\forall X\in \Gamma(H),
\end{align}
where $\Gamma(\cdot)$ denotes sections of a fiber bundle. It is an exercise (analogous to deriving the standard Lie bracket definition of the Levi form) to show that \eqref{LKAO def} depends only on the value of $X$ at $q$. The set $\{\mathrm{ad}_v\,|\, v\in K_q\}$ is a local invariant of the CR structure at $q\in M$.
\begin{remark}[Terminology related to the $\mathrm{ad}_v$ operators]\label{nilpotent symbol definition}
    The CR symbol of $M$ at $q$ is a local invariant defined in \cite{porter2021absolute} in terms of the Levi form at $q$ and the set $\{\mathrm{ad}_v\,|\, v\in K_q\}$. Matching the terminology of \cite{porter2021absolute}, we say that the CR symbol is nilpotent if each $\mathrm{ad}_v$ is nilpotent.

    Structures whose Levi form's kernel is a regular distribution are $2$-nondegenerate if and only if the mapping $v\mapsto \mathrm{ad}_v$ is injective. While this suffices as a definition in our present specialized setting, we refer to \cite{ber} for the broader definitions of $2$-nondegeneracy and more generally $k$-nondegeneracy.
\end{remark}

We will describe now the Levi kernel adjoint operators in terms of $\Phi$. For this suppose that $X_1,\ldots, X_{n-1}$ are all not in the Levi kernel at every point, which can be achieved at least locally without loss of generality by permuting the coordinates because we are also assuming that $\mathcal{L}$ has rank $n-1$. Let $v$ be a nonzero $M$-dependent column vector whose scalar components are functions of $(x_1,\ldots, x_n)$ such that $H_{\mathcal{L}}v=v^TH_{\mathcal{L}}=0$ for every point in $M$. Accordingly,
\begin{align}\label{lk formula}
V:=\sum_{j=1}^n v_j X_j
\end{align}
is a vector field spanning the Levi kernel at every point in $M$. For each $q\in M$, we have
\begin{align}
\mathrm{ad}_{V_q}(X_k)=\sum_{j=1}^nv_j\left[X_j,\overline{X_k}\right]-\overline{X_k}(v_j)X_j 
 = \sum_{j=1}^nv_j\left(H_{\mathcal{L}}\right)_{j,k}-\overline{X_k}(v_j)X_j 
& =- \sum_{j=1}^n\overline{X_k}(v_j)X_j \\
& =- \frac12 \sum_{j=1}^n \frac{\partial v_j}{\partial x_k}X_j,
\end{align}
where $\pmod{\overline H\oplus K}$ has been omitted. Therefore, the antilinear operator $\mathrm{ad}_{V_q}$ is represented by the matrix
\begin{align}\label{LKAO WRT Phi}
A:=\left(
\begin{array}{ccc}
\frac{\partial v_1}{\partial x_1} & \cdots & \frac{\partial v_1}{\partial x_{n-1}}  \\
\vdots & \ddots & \vdots \\
\frac{\partial v_{n-1}}{\partial x_1}& \cdots & \frac{\partial v_{n-1}}{\partial x_{n-1}} 
\end{array}
\right)
\end{align}
with respect to $(X_1,\ldots, X_{n-1})$, where $v_1,\ldots, v_n$ are functions of $(x_1,...,x_n)$ satisfying
\[
0=\sum_{j=1}^nv_j\left(H_{\mathcal{L}}\right)_{j,k}=\sum_{j=1}^nv_j \frac{\partial^2 \Phi}{\partial x_j\partial x_k}
\quad\quad\forall k\in \{1,\ldots, n\},
\]
which are uniquely determined up to a (point-dependent) rescaling of $v$.

\section{Proof of Theorem \ref{main theorem}} \label{Proof of main theorem}
Define the real valued function $\Phi$ in a neighborhood of $0\in \mathbb{C}^n$ by 
\begin{align}\label{main formula alt}
\Phi\left(z_1,\ldots, z_n,\overline{z_1},\ldots, \overline{z_n}\right):=f(x_1)+\sum_{0<j<j+k\leq n}x_jx_kx_n^{n-j-k},
\end{align}
and
\[
M:=\left\{  z\in\mathbb{C}^{n+1}\,\left|\,\Phi\left(z_1,\ldots, z_n,\overline{z_1},\ldots, \overline{z_n}\right)=z_0+\overline{z_0}\right.\right\},
\]
so that $M\subset \mathbb{C}^{n+1}$ is equivalent to the hypersurface defined by \eqref{main formula} via the biholomorphism $z\mapsto \left(\tfrac{z_0}{2}, z_1,\ldots, z_n\right)$.

Noting \eqref{levi form} and \eqref{main formula alt}, we get 
\begin{align}\label{levi form expl}
\bgroup
\arraycolsep=2pt
H_{\mathcal{L}}=
2\left(
\begin{array}{cccccc;{2pt/2pt}c}
x_n^{n-2}+f^{\prime\prime}(x_1) & x_n^{n-3} & \cdots& x_n^2 & x_n& 1 & \sum_{j=1}^{n-2}jx_{n-1-j}x_n^{j-1}\\
x_n^{n-3} & & \iddots & x_n & 1 &0& \vdots \\
\vdots&\iddots & \iddots & \iddots & 0 & 0  &\vdots\\
x_n^{2} & \iddots& \iddots & \iddots &  & \vdots & \sum_{j=1}^{2}jx_{3-j}x_n^{j-1}\\
x_n & \iddots& 0 &  &  & 0 & \sum_{j=1}^{1}jx_{2-j}x_n^{j-1}\\
1 & 0& 0 &\cdots  & 0 & 0 & 0 \\\hdashline[2pt/2pt]
\left( H_{\mathcal{L}}\right)_{n,1} &\cdots & \cdots & \cdots & \cdots & & \sum\limits_{j=1}^{n-3}\binom{j+1}{2}\left(\sum\limits_{r+s=n-1-j}x_rx_sx_n^{j-1}\right)
\end{array}
\right),
\egroup
\end{align}
and using this explicit formula for $H_{\mathcal{L}}$ one can easily calculate that $H_{\mathcal{L}}v=v^TH_{\mathcal{L}}=0$ for the column vector
\begin{align}\label{kernel of levi form expl}
v:=
\left(
\begin{array}{c}
0 \\\hdashline[2pt/2pt]
\sum_{j=1}^1 x_jx_n^{1-j} \\
\sum_{j=1}^2 x_jx_n^{2-j} \\
\vdots \\
\sum_{j=1}^{n-2} x_jx_n^{n-2-j} \\\hdashline[2pt/2pt]
-1
\end{array}
\right).
\end{align}
So the vector field
\[
V:=\sum_{j=1}^n v_j X_j
\]
is in the Levi kernel of $M$. Furthermore, $V$ spans the Levi kernel of $M$ at every point because $H_{\mathcal{L}}$ has a $1$-dimensional kernel everywhere, which can be seen by noting that the upper left $(n-1)\times (n-1)$ block in \eqref{levi form expl} is nondegenerate.

By \eqref{LKAO WRT Phi}, the Levi kernel adjoint operator $\mathrm{ad}_V$ is represented by the matrix
\begin{align}\label{LKAO calculation}
A=\sum_{j=1}^{n-2}x_n^{j-1}T^j_{n-1}
\end{align}
with respect to $(X_1, \ldots, X_{n-1})$, where here $T_k$ denotes the $k\times k$ Toeplitz matrix whose $(r,s)$ entry is given by
\begin{align}\label{fundamental toeplitz}
\left(T_k\right)_{r,s}:= \delta_{r-s,1}
=
\begin{cases}
1 &\mbox{ if } r-s=1\\
0 &\mbox{ if }r-s\neq1.
\end{cases}
\end{align}

Noting Remark \ref{nilpotent symbol definition}, this completes the proof of Theorem \ref{main theorem} because we have shown  that \eqref{main formula} has a rank $1$ Levi kernel and that its Levi kernel adjoint operators are, up to rescaling, represented by the matrix \eqref{LKAO calculation} with respect to $(X_1, \ldots X_{n-1})$, which is indeed a nilpotent matrix having a $1$-dimensional eigenspace.

\section{Linking and extending defining equations}\label{Linking and extending defining forms}

We have seen that \eqref{main formula} defines a uniformly $2$-nondegenerate hypersurface with Levi kernel adjoint operators that admit matrix representations by single nilpotent Jordan blocks. To obtain structures with arbitrary nilpotent Levi kernel adjoint operators, we introduce now processes of \emph{linking} and \emph{extending} defining equations. These processes will be applied to the above tubular hypersurfaces, but they can just as easily be defined in greater generality, which is how we present their definition. 

\begin{definition}
A \emph{$2$-dimensional extension}  $M^\prime$ of a hypersurface $M$ defined by 
\[
M:=\left\{z\in\mathbb{C}^{n+1}\,\left|\,\Re(z_0)=f\left(z_1,\ldots, z_n,\overline{z_1},\ldots, \overline{z_n}\right)\right.\right\}
\]
for some real-valued function $f$, is a hypersurface in $\mathbb{C}^{n+2}$ defined by 
\[
M^\prime=\left\{z\in\mathbb{C}^{n+2}\,\left|\,\Re(z_0)=f\left(z_1,\ldots, z_n,\overline{z_1},\ldots, \overline{z_n}\right)+\epsilon \left(\Re z_{n+1}\right)^2\right.\right\}
\]
for some $\epsilon\in \{-1,1\}$. Similarly $f+\epsilon \left(\Re z_{n+1}\right)^2$ will be called an \emph{extension of the defining form $f$}.
\end{definition}

\begin{definition}\label{link definition}
Let $M_1$ and  $M_2$ be two $2$-nondegenerate hypersurfaces $M_1\subset\mathbb{C}^{n_1+1}$ and $ M_2\subset\mathbb{C}^{n_2+1}$, $n_1,n_2\geq 1$, defined by 
\[
M_1:=\left\{z\in\mathbb{C}^{n_1+1}\,\left|\,\Re(z_0)=\Phi_1\left(z_1,\ldots, z_{n_1},\overline{z_1},\ldots, \overline{z_{n_1}}\right)\right.\right\}
\]
and
\[
M_2:=\left\{z\in\mathbb{C}^{n_2+1}\,\left|\,\Re(z_0)=\Phi_2\left(z_1,\ldots, z_{{n_2}},\overline{z_1},\ldots, \overline{z_{{n_2}}}\right)\right.\right\}
\]
for some defining functions $\Phi_1$ and $\Phi_2$ such that the vector fields $(X_1)_1,\ldots, (X_1)_{n_1-1}$ and $(X_2)_1,\ldots,(X_2)_{n_2-1}$ are not in their respective hypersurface's Levi kernel, where these vector fields are defined by \eqref{TCR bundle} while setting $\Phi=\Phi_j$ and $X_k=(X_j)_k$  for each $j=1,2$.
\emph{The link $\Phi$ of the defining functions $\Phi_1$ and $\Phi_2$} is the function $\Phi:\mathbb{C}\oplus \mathbb{C}^{n_1-1}\oplus \mathbb{C}^{n_2-1}\to \R$ given by
\begin{align}
\Phi( z_1,\ldots, z_{n_1+n_2-2},\xi,\overline{z_1},\ldots, \overline{z_{n_1+n_2-2}},\overline{\xi})&:=\Phi_1\left(z_1,\ldots, z_{n_1-1},\xi,\overline{z_1},\ldots, \overline{z_{n_1-1}},\overline{\xi}\right) \\
& \hspace{.6cm}+\Phi_2\left(z_{n_1},\ldots, z_{n_1+n_2-2},\xi,\overline{z_{n_1}},\ldots, \overline{z_{n_1+n_2-2}},\overline{\xi}\right)
\end{align}
The hypersurface $\{z\in\mathbb{C}^{n_1+n_2}\,|\, \Re(z_0)=\Phi(z_1,\ldots, z_{n_1+n_2-1},\overline{z_1},\ldots, \overline{z_{n_1+n_2-1}})\}$ will be called a \emph{link} of $M_1$ and $M_2$.
\end{definition}
\begin{lemma}\label{link of 2-nondegenerates lemma}
For $\Phi_1$, $\Phi_2$ and $\Phi$ as in Definition \ref{link definition}, the link $\Phi$ of $\Phi_1$ and $\Phi_2$ defines a uniformly $2$-nondegenerate structure.
\end{lemma}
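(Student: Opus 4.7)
The plan is to exploit the fact that in a link $\Phi=\Phi_1+\Phi_2$ the two summands depend on disjoint sets of non-$\xi$ variables, which gives both the Levi form and the Levi kernel adjoint operator a block structure that reduces everything to the corresponding structures on $M_1$ and $M_2$. First, I would write the Hermitian Levi matrix $H_\mathcal{L}$ of the link with respect to the ordering $(z_1,\ldots,z_{n_1-1},z_{n_1},\ldots,z_{n_1+n_2-2},\xi)$. Since the mixed second partial $\partial^2\Phi/(\partial x_r\partial x_s)$ vanishes whenever $r$ and $s$ lie in disjoint variable groups, $H_\mathcal{L}$ takes the block form
\[
H_\mathcal{L}=\begin{pmatrix} H_1 & 0 & c_1 \\ 0 & H_2 & c_2 \\ c_1^T & c_2^T & h_1+h_2\end{pmatrix},
\]
where $H_j$, $c_j$, and $h_j$ denote the natural blocks of the Levi matrix of $M_j$ obtained by splitting its variables into the non-$\xi$ group and the $\xi$ variable.

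Second, I would show that the Levi kernel of the link is rank one and exhibit a spanning kernel vector explicitly. Let $(u_j,t_j)$ span the Levi kernel of $M_j$ written with respect to $(X_1,\ldots,X_{n_j-1},\xi)$. By the standing hypothesis that $(X_j)_1,\ldots,(X_j)_{n_j-1}$ are not in the Levi kernel of $M_j$, each block $H_j$ is nondegenerate at every point and the scalar $t_j$ is nowhere vanishing; we may therefore normalize so that $t_1=t_2=1$. The kernel identities $H_ju_j+c_j=0$ and $c_j^Tu_j+h_j=0$ for $M_j$ then immediately give that $(u_1,u_2,1)$ is annihilated by the block matrix $H_\mathcal{L}$. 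Nondegeneracy of $H_1$ and $H_2$ forces $\ker H_\mathcal{L}$ to be exactly one-dimensional.

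Third, I would apply formula \eqref{LKAO WRT Phi} to the link. Each component $(u_1)_r$ is a function only of the $\Phi_1$ variables (i.e., $x_1,\ldots,x_{n_1-1}$ and $\Re\xi$), while symmetrically $(u_2)_r$ depends only on the $\Phi_2$ variables and $\Re\xi$. Hence partial derivatives with respect to variables in the opposite block vanish, and the Levi kernel adjoint operator matrix of the link with respect to $(X_1,\ldots,X_{n_1+n_2-2})$ takes the block diagonal form
\[
A_\mathrm{link}=\begin{pmatrix} A_1 & 0 \\ 0 & A_2\end{pmatrix},
\]
where $A_j$ is the Levi kernel adjoint operator matrix of $M_j$. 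Since $M_1$ and $M_2$ are uniformly $2$-nondegenerate with rank $1$ Levi kernel, each $A_j$ is nonzero at every point, so $A_\mathrm{link}$ is everywhere nonzero, which together with the rank $1$ Levi kernel established in the previous step yields uniform $2$-nondegeneracy of the link.

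The principal subtlety is verifying that the rescaling aligning the $\xi$-components of the two kernel vectors is globally available; this rests on the fact that under the given hypotheses the $\xi$-component of the $M_j$ kernel vector is nowhere vanishing, since otherwise the kernel would collapse into $\mathrm{span}\{(X_j)_1,\ldots,(X_j)_{n_j-1}\}$. Once this normalization is in place the argument reduces to a routine block-matrix computation, and both the Levi form and Levi kernel adjoint operator block decompositions transfer the $2$-nondegeneracy of the link's two constituents directly to the link itself.
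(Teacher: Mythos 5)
Your proposal is correct and follows essentially the same route as the paper's proof: the same block decomposition of the linked Levi form, the same kernel vector $(v_1',v_2',1)$ obtained by normalizing the $\xi$-components of the two constituent kernel vectors (justified by the hypothesis that the first $n_j-1$ fields are not in the kernel), and the same observation that the Levi kernel adjoint operator of the link is the direct sum of the nonzero adjoint operators of $M_1$ and $M_2$. Your explicit remark that each $v_j'$ depends only on its own block of variables (so the off-diagonal derivative blocks vanish) just spells out what the paper leaves as "clearly seen."
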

\begin{proof}
For $j\in\{1,2\}$, let $H_j$ (respectively $H$) denote the matrix representing the Levi form of $M_j:=\{\Re(z_0)=\Phi_j\}$ (respectively $M:=\{\Re(z_0)=\Phi\}$) given by \eqref{levi form}. Let us partition $H_1$ and $H_2$ into a block decompositions with components labeled $A_j$, $B_j$, and $C_j$ such that 
\[
H_j=
\left(
\begin{array}{cc}
    A_j & B_j \\
    B_j^* & C_j
\end{array}
\right)
\]
and each $C_j$ is a $1\times 1$ matrix. Accordingly,
\begin{align}\label{linked Levi form}
H=
\left(
\begin{array}{ccc}
    A_1 & 0 & B_1 \\
    0 & A_2 & B_2 \\
    B_1^* & B_2^* & C_1+C_2
\end{array}
\right).
\end{align}
Let $v_j$ be the nonzero vector of the form 
\begin{align}\label{linked component kernel formula}
v_j=
\left(
\begin{array}{c}
    v^\prime_j \\
    1
\end{array}
\right)
\end{align}
for some length $n_j-1$ vector $v^\prime_j$, and $H_jv_j=0$, that is,
\begin{align}\label{kernel condition}
A_jv^\prime_j+B_j=0
\quad\mbox{ and }\quad
B_j^*v^\prime_j+C_j=0
\end{align}
Indeed such $v_j$ exist because the properties of $\Phi_j$ given in Definition \ref{link definition} imply that the kernel of each $H_j$ is nonempty and transversal to $\{(z_1,\ldots, z_{n_j})\,|\, z_{n_j}=0\}$. In particular, each $A_j$ is nonsingular, so the matrix in \eqref{linked Levi form} has at most a $1$-dimensional kernel. By \eqref{linked component kernel formula} and \eqref{kernel condition} the vector 
\begin{align}\label{linked kernel formula}
    v=
    \left(
\begin{array}{c}
    v^\prime_1 \\
    v^\prime_2 \\
    1
\end{array}
\right)
\end{align}
is in the right kernel of $H$ (i.e., $Hv=0$). A matrix representing one of the Levi kernel adjoint operators of $M$ (respectively $M_j$) is given by \eqref{LKAO WRT Phi} with $v$ as in \eqref{linked kernel formula} (respectively \eqref{linked component kernel formula}), from which it is clearly seen that the matrix is a direct sum of matrices representing nonzero Levi kernel adjoint operators of $M_1$ and $M_2$. Hence $M=\{\Re(z_0)=\Phi\}$ is uniformly $2$-nondegenerate with a rank 1 Levi kernel.
\end{proof}

We will describe now in more detail structures arising from linking and extending the defining equations in \eqref{main formula}.

To state the following theorem, we introduce notation, for a  function $f:\R\to\R$ and integer $n>0$, labeling as $P_{f, n}(x_1,\ldots, x_n)$ the function of $n+1$ real variables $\left(x_1,\ldots, x_n,\Re(\xi)\right)$ given by
\begin{align}\label{main defining function}
P_{f, n}(x_1,\ldots, x_n):=f(x_1)+\sum_{j+k\leq n+1}x_jx_k\left(\Re \xi\right)^{n+1-j-k},
\end{align}
where $0<j, 0<k$.
We also index, for a sequence of positive integers $n_1,\ldots, n_\mu$, the complex coordinates of $\mathbb{C}^2\oplus\mathbb{C}^{n_1}\oplus\cdots \oplus\mathbb{C}^{n_\mu}$ by
\[
(z_0,\xi,z_{1,1},\ldots, z_{1,n_1},\ldots, ,z_{\mu,1},\ldots, z_{\mu,n_\mu}).
\]

\begin{theorem}\label{second main theorem}
For any nilpotent matrix $A$, by linking and extending defining equations of the type in \eqref{main formula}, one can obtain defining equations of uniformly $2$-nondegenerate hypersurfaces with rank $1$ Levi kernel having Levi kernel adjoint operators represented by $A$ at every point. In particular for any twice differentiable functions $f_1,\ldots, f_\mu$ and positive integers $n_1,\ldots, n_\mu$, the hypersurface
\[
M:=\{z\in \mathbb{C}^2\oplus\mathbb{C}^{n_1}\oplus\cdots \oplus\mathbb{C}^{n_\mu}\,|\, \Re(z_0)=\Phi \}
\]
defined with
\[
\Phi\left(z_{1,1},\ldots, z_{\mu,n_\mu},\xi,\overline{z_{1,1}},\ldots, \overline{z_{\mu,n_\mu}},\overline{\xi}\right):=\sum_{j=1}^\mu P_{f_j,n_j}(\Re z_{j,1},\ldots, \Re z_{j,n_j})
\]
has Levi kernel adjoint operators at every point represented by the nilpotent matrix
\begin{align}\label{nilpotent sum}
A=T_{n_1}\oplus\cdots \oplus T_{n_\mu}
\end{align}
where $P_{f_j,n_j}$ are defined in \eqref{main defining function} and $T_k$ are the nilpotent matrices defined in \eqref{fundamental toeplitz}.
\end{theorem}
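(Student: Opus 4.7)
The plan is to reduce the theorem to Theorem \ref{main theorem} via iterated application of the linking construction from Lemma \ref{link of 2-nondegenerates lemma}. The first statement will follow from the second together with the Jordan normal form for nilpotent matrices: any nilpotent $A$ is conjugate to a direct sum $T_{n_1}\oplus\cdots\oplus T_{n_\mu}$ of Jordan blocks, and since the Levi kernel adjoint operator is an invariant only up to change of basis on $H/K$, realizing any single conjugate of $A$ suffices. I will therefore focus on the second claim.

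For each $j$, the partial defining function $P_{f_j,n_j}$ together with the shared variable $\xi$ describes a hypersurface $M_j\subset\mathbb{C}^{n_j+2}$ that is precisely of the form treated in Theorem \ref{main theorem}, with $\xi$ playing the role of the Levi-kernel-direction variable $z_n$ and with the parameter $n$ of Theorem \ref{main theorem} equal to $n_j+1$. That theorem then immediately delivers uniform $2$-nondegeneracy, a rank $1$ Levi kernel, and an LKAO represented in the natural basis by the nilpotent matrix $\sum_{k=1}^{n_j-1}(\Re\xi)^{k-1}T_{n_j}^k$, which is similar to $T_{n_j}$ at every point. For the boundary case $n_j=1$, where $P_{f_j,1}(x)=f_j(x)+x^2$ and the hypothesis $n>2$ of Theorem \ref{main theorem} fails, the contribution to the Levi form is just a strictly (in)definite direction orthogonal to the rest, yielding an empty $T_1=(0)$ block, which can be verified directly from \eqref{levi form} and \eqref{LKAO WRT Phi}.

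Next, the global defining function $\Phi=\sum_{j=1}^\mu P_{f_j, n_j}$ is obtained by iterating Definition \ref{link definition}'s construction $\mu-1$ times with $\xi$ as the shared Levi-kernel variable. A straightforward induction on $\mu$, applying Lemma \ref{link of 2-nondegenerates lemma} at each step, will establish that $M$ is uniformly $2$-nondegenerate with rank $1$ Levi kernel; the inductive step uses the key observation that after each link $\xi$ continues to serve as the Levi-kernel direction, so the hypothesis of Definition \ref{link definition} remains satisfied. The main computational step, and what I expect to be the primary obstacle, will be verifying that the matrix representation \eqref{LKAO WRT Phi} of a link's LKAO is the \emph{direct sum} of the matrix representations of the constituents' LKAOs, rather than some more intricate block-triangular combination. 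This will follow by unpacking the block structures \eqref{linked Levi form} and \eqref{linked kernel formula} and noting that the entries of the Levi-kernel vector $v^\prime_j$ for the $j$-th constituent depend only on coordinates of its own block, so that all cross-derivatives vanish. Iterating then yields that the LKAO of $M$ is represented at every point, up to similarity, by $T_{n_1}\oplus\cdots\oplus T_{n_\mu}$, as claimed.
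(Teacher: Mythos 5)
Your proposal is correct and follows essentially the same route as the paper, which likewise omits a detailed proof on the grounds that the result is obtained by combining Lemma \ref{link of 2-nondegenerates lemma} with the computation in the proof of Theorem \ref{main theorem}: the block-diagonal Levi form, the block form of the kernel vector, and the resulting direct-sum structure of the matrix \eqref{LKAO WRT Phi} are exactly the points the paper's sketch highlights. Your explicit attention to the $n_j=1$ summands (which are extensions rather than links, since a one-variable block is Levi-nondegenerate and so falls outside Definition \ref{link definition}) and to the persistence of the transversality hypothesis under iterated linking fills in details the paper leaves implicit.
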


We omit a detailed proof of Theorem \ref{second main theorem} because it is essentially outlined in the proof of Lemma \ref{link of 2-nondegenerates lemma} and Theorem \ref{main theorem} detailed already above.  The only meaningful difference between this and Theorem \ref{main theorem} is that instead of having \eqref{levi form expl}, the upper left $(n-1)\times (n-1)$ block of $H_{\mathcal{L}}$ in \eqref{levi form expl} should be replaced with a block diagonal matrix having $\mu$ blocks on the diagonal of sizes $n_1\times n_1,\ldots, n_\mu\times n_\mu$ (e.g., as in \eqref{linked Levi form} for the case $\mu=2$), of which each has the same general form as the upper left $(n-1)\times (n-1)$ in \eqref{levi form expl}. And, similarly, the last column and last row of $H_{\mathcal{L}}$ change from the formula in \eqref{levi form expl} as follows: one replaces the upper right $(n-1)\times 1$ block (respectively, lower left $1\times (n-1)$ block) with $\mu$ blocks of sizes $n_1\times 1,\ldots, n_\mu\times 1$ (respectively, $1\times n_1,\ldots , 1\times n_\mu$), whose exact formulas are straightforward to calculate.  Noting that \eqref{kernel of levi form expl} defines a vector in the kernel of the matrix in \eqref{levi form expl}, and noting the similarities between this new $H_{\mathcal{L}}$ corresponding to Theorem \ref{second main theorem} and the matrix in \eqref{levi form expl}, one can easily deduce the general form of a field of vectors $v$ in the kernel of this new $H_{\mathcal{L}}$. Lastly, applying \eqref{LKAO WRT Phi} to any such nonzero $v$, one finds directly that $A$ is a direct sum of nilpotent  Jordan blocks with the appropriate sizes, implying that the Levi kernel adjoint operators can each be represented by \eqref{nilpotent sum} with respect to some basis.

\section{2-nondegeneracy and formula (\ref{FGF})}\label{Uniqueness of the defining equation formula}

In this section we derive Theorem \eqref{solution to S theorem}
classifying, up to local equivalence, all uniformly $2$-nondegenerate hypersurfaces with rank $1$ Levi kernels defined by \eqref{FGF}.

\begin{lemma}\label{normalizing quartic terms}
Every {smooth} hypersurface $M\subset \mathbb{C}^{n+1}$ of the type described in Theorem \ref{main theorem} given by \eqref{main formula} is equivalent via a holomorphic change of coordinates to a hypersurface of the form given by \eqref{main formula} with $f(0)=f^\prime(0)=f^{\prime\prime}(0)=f^{(3)}(0)=0$ and $f^{(4)}(0)\in\{0,1\}$.
\end{lemma}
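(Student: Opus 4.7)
I propose to normalize the low-order Taylor coefficients of $f$ at the origin by composing four explicit holomorphic transformations of $\mathbb{C}^{n+1}$, each one preserving the form \eqref{main formula} while modifying $f$ in a controlled way.

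First, the affine biholomorphism $z_0 \mapsto z_0 - f(0) - f'(0) z_1$ kills the constant and linear Taylor coefficients: since $f(0) + f'(0) z_1$ has real coefficients, its real part $f(0) + f'(0) x_1$ depends only on $x_1$, so in the new coordinates the defining function becomes $\Phi - f(0) - f'(0) x_1$, and the new $\tilde f = f - f(0) - f'(0) x_1$ vanishes to second order at the origin. Next, the linear change $z_{n-1} \mapsto z_{n-1} + \tfrac{f''(0)}{4} z_1$ kills the quadratic coefficient: in the polynomial part $P = \sum x_j x_k x_n^{n-j-k}$, the variable $x_{n-1}$ appears only through the single term $2 x_1 x_{n-1}$ (no other $(j,n-1)$-pairs satisfy $j+k \le n$ when $n > 2$), so substituting $x_{n-1} \mapsto x_{n-1} - \tfrac{f''(0)}{4} x_1$ modifies the defining function by $-\tfrac{f''(0)}{2} x_1^2$, which is absorbed into $f$, killing $f''(0)$ while preserving $f(0)$ and $f'(0)$.

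The cubic coefficient is killed by combining the two holomorphic shifts $z_{n-1} \mapsto z_{n-1} + \tfrac{f'''(0)}{8} z_1^2$ and $z_0 \mapsto z_0 + \tfrac{f'''(0)}{12} z_1^3$. The first shift alone introduces an $(\Im z_1)^2$-dependence in the defining function through $\Re(z_1^2) = (\Re z_1)^2 - (\Im z_1)^2$, but the $z_0$-shift contributes $\Re\bigl(\tfrac{f'''(0)}{12} z_1^3\bigr) = \tfrac{f'''(0)}{12}\bigl[(\Re z_1)^3 - 3(\Re z_1)(\Im z_1)^2\bigr]$, tuned precisely so that its $(\Im z_1)^2$-term cancels that from the first shift. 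The net effect is a pure $x_1^3$ modification of $f$ killing $f'''(0)$, while $f(0), f'(0), f''(0)$ are unchanged.

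Finally, apply the real two-parameter scaling $z_j \mapsto c_1 c_n^{j-1} z_j$ for $1 \le j \le n-1$, $z_n \mapsto c_n z_n$, and $z_0 \mapsto c_1^2 c_n^{n-2} z_0$. A direct verification using the weighted structure of $P$ shows this preserves the polynomial part and transforms $f$ by $\tilde f(x_1) = c_1^2 c_n^{n-2} f(x_1/c_1)$, giving $\tilde f^{(4)}(0) = c_1^{-2} c_n^{n-2} f^{(4)}(0)$ while preserving the lower normalizations. A suitable choice of $c_1, c_n \in \mathbb{R}^\times$ normalizes $|\tilde f^{(4)}(0)|$ to $0$ or $1$. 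The main obstacle is adjusting the sign of $f^{(4)}(0)$ when $n$ is even, since $c_1^{-2} c_n^{n-2} > 0$ for all real $c_1, c_n$ in that case; this requires composing with an additional biholomorphism combining $P$-compatible coordinate reflections with a compensating $z_0$-shift to flip $f$ along the $x_1$-axis, which is the delicate step after the explicit normalizations above.
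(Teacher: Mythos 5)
Your steps killing $f(0),f'(0),f''(0),f^{(3)}(0)$ are correct and are essentially the paper's own argument: the paper performs them in a single stroke via the map \eqref{remove}, shifting $z_{n-1}$ by a polynomial in $z_1$ (exploiting, exactly as you do, that $x_{n-1}$ enters the polynomial part $P$ only through the term $2x_1x_{n-1}$) and compensating the $(\Im z_1)^2$-contamination of $\Re(z_1^2)$ by a cubic shift of $z_0$; your explicit constants check out. Your two-parameter scaling in the last step is likewise the paper's weighted dilation in different clothing, and it correctly normalizes $\lvert f^{(4)}(0)\rvert$ to $0$ or $1$.

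The gap is the one you yourself flag: the sign of $f^{(4)}(0)$ when $n$ is even, and your sketched repair does not work. ``Flipping $f$ along the $x_1$-axis'' replaces $f(x)$ by $f(-x)$ and therefore leaves $f^{(4)}(0)$ unchanged; what is actually needed is $f\mapsto -f$, i.e.\ a transformation with $x_0\mapsto -x_0$, and no sign pattern $x_m\mapsto(-1)^{\epsilon_m}x_m$ achieves this while preserving $P$ when $n$ is even: the diagonal term $x_1^2x_n^{n-2}$ forces $\epsilon_0\equiv (n-2)\epsilon_n\equiv n\epsilon_n\pmod 2$, which is $0$ for even $n$. (For odd $n$ the reflection $z_0\mapsto -z_0$, $z_j\mapsto(-1)^jz_j$, $\zeta\mapsto-\zeta$ does flip the sign of $f$ --- although your scaling with $c_n<0$ already covers that case.) A compensating $z_0$-shift cannot rescue this, since it only adds pluriharmonic terms whereas $-2P$ is not pluriharmonic. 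You should be aware that the paper's own proof quietly has the same problem: it raises $f^{(4)}(0)$ to the fractional powers $w(x_j)$, e.g.\ $3/(n-2)$, which is not a well-defined real number when $f^{(4)}(0)<0$ and $n-2$ is even. So for even $n$ either an essentially different (non--graded-linear) equivalence must be exhibited, or the conclusion should be weakened to $f^{(4)}(0)\in\{-1,0,1\}$; as written, neither your argument nor the paper's establishes $f^{(4)}(0)\in\{0,1\}$ in that case.
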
 

\begin{proof}
Let
\begin{align}\label{homogeneous defining function}
F(z)=f(x_1)+\sum_{0<j<j+k\leq n}x_jx_kx_n^{n-j-k}.
\end{align}
Firstly, 
\begin{equation}\label{remove}
z\mapsto \left(z_0+f(0)-\frac{f^{(3)}(0)}{12}z_1^3,z_1,\ldots, z_{n-1}-\frac{4f^\prime(0)+2f^{\prime\prime}(0)z_1+f^{(3)}(0)z_1^2}{8},z_n\right)
\end{equation} 
transforms $x_0=F$ into $x_0=F-f(0)-f^\prime(0)x_1-\tfrac{f^{\prime\prime}(0)}{2}x_1^2-\tfrac{f^{(3)}(0)}{6}x_1^3$, so indeed we can achieve $f(0)=f^\prime(0)=f^{\prime\prime}(0)=f^{(3)}(0)=0$.

Now to normalize $f^{(4)}(0)$, consider the weight system
\[
w(x_0)=5,\quad
w(x_n)=\frac{3}{n-2},
\quad\mbox{ and }\quad
w(x_j):=\frac{n-5+3j }{n-2}
\quad\forall\, j\in\{1,\ldots, n-1\},
\]
assigning weights to the variables $x_0,\ldots, x_n$.
If $f^{(4)}(0)\neq 0$, then the corresponding weighted dilation
\[
x\mapsto \left(\left(f^{(4)}(0)\right)^{w(x_0)}x_0,\left(f^{(4)}(0)\right)^{w(x_1)}x_1,\ldots, \left(f^{(4)}(0)\right)^{w(x_n)} x_n\right),
\]
transforms $x_0=F$  into
\[
x_0=f\left(f^{(4)}(0)x_1\right)\left(f^{(4)}(0)\right)^{-5}+\sum_{0<j<j+k\leq n}x_jx_kx_n^{n-j-k},
\] 
normalizing $f^{(4)}(0)$ because
\[
\left.\frac{d^4}{dx_1^4}\left( f\left(f^{(4)}(0)x_1\right)\right)\left(f^{(4)}(0)\right)^{-5}\right|_{x_1=0}=1.
\]
\end{proof}

\begin{lemma}\label{solutions to S lemma}
If $M\subset \mathbb{C}^{n+1}$ is the hypersurface defined by \eqref{FGF} with
\begin{align}\label{general coeficient matrix}
S=
\left(
\begin{array}{ccccc}
    0 & \ldots & \ldots & 0 & S_{n_\mu}\\
    \vdots &  & \iddots & S_{n_{\mu-1}} & 0\\
    \vdots & \iddots & \iddots & \iddots & \vdots\\
    0 & S_{n_2} & \iddots &  & \vdots\\
    S_{n_1} & 0 & \ldots & \ldots & 0
\end{array}
\right)
\end{align}
for some positive integers $n_1,\ldots, n_\mu$ satisfying $n_1+\cdots+n_\mu=n-1$ and $n_j=n_{\mu+1-j}$ for all $j$, where $S_{l}$ denotes the $l\times l$ matrix whose $(j,k)$ entry is
\[
(S_l)_{j,k}:=
\begin{cases}
1 & \mbox{ if }j+k\leq l+1\\
0 & \mbox{ otherwise,}
\end{cases}
\]
then either 
\begin{itemize}
\item $\mu=1$ and $M$ is the $2$-nondegenerate hypersurface defined by a special case of the general form \eqref{main formula}, or
\item $\mu>1$ and $M$ is the $2$-nondegenerate hypersurface defined by a combination of links and extensions of lower-dimensional defining functions also of the form in \eqref{main formula}, as defined in Section \ref{Linking and extending defining forms}.
\end{itemize}
\end{lemma}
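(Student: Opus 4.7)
My strategy splits on the number of blocks $\mu$. For $\mu = 1$, the matrix $S$ reduces to the single block $S_{n-1}$, whose entries satisfy $(S_{n-1})_{j,k} = 1$ iff $j + k \leq n$, so direct substitution into \eqref{FGF} yields exactly \eqref{main formula} with $f \equiv 0$, giving the first alternative. For $\mu > 1$, I would decompose the right-hand side of \eqref{FGF} into contributions from the middle diagonal block (when $\mu$ is odd) and each off-diagonal block pair of $S$, identify each contribution with a piece of a link or extension construction from Section \ref{Linking and extending defining forms}, and then assemble them using the shared variable $z_n$.

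The technical backbone of the $\mu > 1$ case is the arithmetic identity $N_c + N_{\mu+1-c} = n - n_c - 1$, where $N_c := n_1 + \cdots + n_{c-1}$. This identity, which follows from the palindrome condition $n_j = n_{\mu+1-j}$ together with $n_1 + \cdots + n_\mu = n - 1$, ensures that the powers of $x_n$ appearing in each block's contribution align exactly with the weights in the functions $P_{0, n_c}$ of Theorem \ref{second main theorem}. For the middle diagonal block (present only for odd $\mu$ at $c_0 := (\mu+1)/2$), the identity specializes to $n - 2 N_{c_0} = n_{c_0} + 1$; setting $y_r := x_{N_{c_0}+r}$, the block's contribution becomes $\sum_{r+s \leq n_{c_0}+1} y_r y_s \, x_n^{n_{c_0}+1-r-s}$, which is precisely $P_{0, n_{c_0}}$ applied to $(y_1, \ldots, y_{n_{c_0}})$ with shared variable $\xi = z_n$, a defining function of the form \eqref{main formula}. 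For each off-diagonal block pair at position $c < c_0$, with $u_r := x_{N_c+r}$ and $v_r := x_{N_{\mu+1-c}+r}$, the contribution reduces to the bilinear expression $2 \sum_{r+s \leq n_c+1} v_r u_s \, x_n^{n_c+1-r-s}$. My plan here is to apply a linear change of coordinates rotating $(v, u)$ into sum and difference variables $y_r := (v_r + u_r)/\sqrt{2}$, $y'_r := (v_r - u_r)/\sqrt{2}$, so as to decompose this bilinear form into a $P_{0, n_c}$-piece in the $y$-variables together with suitably signed extensions in the $y'$-variables. With the middle and off-diagonal pieces all sharing the variable $z_n$, the link construction then naturally assembles them, realising $M$ as defined by a combination of links and extensions of \eqref{main formula}-type defining functions of lower dimension.

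\textbf{Main obstacle.} The delicate point is the off-diagonal pair treatment for $n_c \geq 2$. A naive real-linear rotation $v_r \pm u_r$ produces a $-P_{0, n_c}(y')$ summand that is not itself of the form \eqref{main formula}, and extensions by definition provide only quadratic modifications. The resolution requires a carefully designed biholomorphism, potentially combining sign reversal of $z_0$ together with coordinate sign reversals of $y'$-variables and of the shared variable, together with extensions using both choices of $\epsilon \in \{-1, 1\}$, so that the negative summand is absorbed as $\lfloor n_c/2 \rfloor$ positive and $\lceil n_c/2 \rceil$ negative extensions augmenting a single $P_{0, n_c}$-piece. The guiding invariant is the Levi signature: the bilinear off-diagonal contribution has signature $(n_c, n_c)$ on its block variables, which matches the signature of a $P_{0, n_c}$-piece (namely $(\lceil n_c/2 \rceil, \lfloor n_c/2 \rfloor)$) augmented by those extensions. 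Explicitly exhibiting this biholomorphism for arbitrary $n_c$, and verifying that the resulting defining function fits within the link/extension framework of Section \ref{Linking and extending defining forms} as a tube form with the shared variable $z_n$, is the main computational step.
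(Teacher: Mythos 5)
Your overall route is the same as the paper's: split off the $\mu=1$ case, group the anti-diagonal blocks of $S$ into pairs $(c,\mu+1-c)$ plus a possible middle block, and apply the block rotation $y=(v+u)/\sqrt2$, $y'=(v-u)/\sqrt2$ to convert each pair's bilinear contribution $2\sum_{r+s\leq n_c+1}v_ru_s\,x_n^{n_c+1-r-s}$ into $P_{0,n_c}(y)-P_{0,n_c}(y')$. That computation, and the bookkeeping identity $N_c+N_{\mu+1-c}=n-n_c-1$ aligning the powers of $x_n$, are exactly the content of the paper's matrices $Q_j$, $\widehat Q_j$ and the orthogonal block matrix $X$ satisfying $Q_j=X^T\widehat Q_jX$. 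Up to this point the proposal is sound.

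The gap is in your ``main obstacle'' and its proposed resolution. You plan to absorb the summand $-P_{0,n_c}(y')$ into $\lfloor n_c/2\rfloor$ positive and $\lceil n_c/2\rceil$ negative \emph{extensions} augmenting a single $P_{0,n_c}$-piece. For $n_c\geq 2$ this cannot work: an extension adds only a term $\epsilon(\Re z)^2$ with no $x_n$-dependence, hence contributes a trivial $1\times 1$ block $T_1=0$ to the Levi kernel adjoint operator, whereas $-P_{0,n_c}(y')$ contains terms such as $-(y_1')^2x_n^{n_c-1}$ and contributes a full nilpotent Jordan block $T_{n_c}$. Since the Jordan type of the adjoint operator is a biholomorphic invariant (this is precisely what Theorems \ref{main theorem} and \ref{second main theorem} compute), the hypersurface with symbol block $T_{n_c}\oplus T_{n_c}$ coming from the pair is not equivalent to one with block $T_{n_c}\oplus T_1^{\oplus n_c}$; your signature count matches but signature does not determine the symbol, so no ``carefully designed biholomorphism'' can realize the proposed absorption. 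The correct (and much simpler) move is to keep $-P_{0,n_c}(y')$ as the \emph{second summand of a link}: Definition \ref{link definition} and Lemma \ref{link of 2-nondegenerates lemma} apply to it verbatim, because $\{\Re z_0=-P_{0,n_c}\}$ is carried to $\{\Re z_0=P_{0,n_c}\}$ by $z_0\mapsto -z_0$ and has the same Levi kernel, so it is an admissible defining function of the form \eqref{main formula} up to this trivial sign change. Extensions are needed only for the pairs with $n_c=1$, where $2v_1u_1=y_1^2-(y_1')^2$ is genuinely a pair of opposite-sign extensions. With that correction the argument closes and coincides with the paper's.
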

\begin{proof}
Clearly if $\mu=1$ then $M$ is of the form in \eqref{main formula}, so we will suppose that $\mu>1$.

We are going to define now a second hypersurface defined by a combination of linking and extending defining equations and prove that it is equivalent to the hypersurface $M$ defined by \eqref{FGF} with \eqref{general coeficient matrix}. For each $j,k\in\mathbb{N}$, let $Q_{j;k}$ denote the $j\times j$ matrix whose $(r,s)$ entry is
\[
\left(Q_{j;k}\right)_{r,s}=\delta_{r+s,j+2-k}.
\]
Define the $(n-1)\times (n-1)$ matrices ${Q}_1,\ldots,{Q}_{n-2}$ by
\[
{Q}_j:=
\left(
\begin{array}{ccccc}
    0 & \ldots & \ldots & 0 & Q_{n_\mu;j}\\
    \vdots &  & \iddots & Q_{n_{\mu-1};j} & 0\\
    \vdots & \iddots & \iddots & \iddots & \vdots\\
    0 & Q_{n_2;j} & \iddots &  & \vdots\\
    Q_{n_1;j} & 0 & \ldots & \ldots & 0
\end{array}
\right)
\]
Notice that 
\begin{align}\label{quadric representation of F}
F(z):=\sum_{j=1}^{n-1}
\left[(x_1,\ldots,x_{n-1}) Q_{j}
\left(\begin{array}{c}
    x_1  \\
    \vdots \\
    x_{n-1}
\end{array}
\right)
\right]x_n^{j-1}
\end{align}
is exactly the right side of \eqref{FGF} with $S$ as in \eqref{general coeficient matrix}, so $M$ is defined by $x_0=F(z)$.

For $j\in\{1,\ldots,\mu\}$ define $F_j:\mathbb{C}^{n_j+1}\to \mathbb{R}$ by
\[
F_j(z_1,\ldots, z_{n_j},\xi):=\sum_{j=1}^{n_j}
\left[(x_1,\ldots,x_{n_j}) Q_{n_j;k}
\left(\begin{array}{c}
    x_1  \\
    \vdots \\
    x_{n_j}
\end{array}
\right)
\right](\Re\xi)^{j-1}.
\]
Relabeling the coordinates $(z_0,\ldots, z_n)$ as
\[
(z_0,\ldots, z_n)=(z_0,z_{1,1},\ldots,z_{1,n_1},\ldots,z_{\mu,1},\ldots,z_{\mu,n_{\mu}}, z_n),
\] define $\widehat{F}:\mathbb{C}^{n+1}\to \mathbb{R}$ by
\begin{align}\label{F hat definition}
\widehat{F}(z):=\left(\sum_{j=1}^{\lceil \mu/2\rceil}F_j(x_{j,1},\ldots, x_{j,n_j},x_n)\right)-\sum_{j=\lceil \mu/2\rceil+1}^{\mu}F_j(x_{j,1},\ldots, x_{j,n_j},x_n).
\end{align}
The function $\widehat{F}$ is a combination of links and extensions of defining functions of the form described in Theorem \ref{second main theorem}, and hence $x_0=\widehat{F}$ defines a uniformly $2$-nondegenerate hypersurface in $\mathbb{C}^{n+1}$. Notice that coeffiecients of powers of $x_n$ in $\widehat{F}$ are all quadratic functions of $(x_1,\ldots, x_{n-1})$, and hence there exist symmetric matrices $\widehat{Q}_{1},\ldots \widehat{Q}_{n-1}$ such that 
\begin{align}\label{quadric representation of F hat}
\widehat{F}(z):=\sum_{j=1}^{n-1}
\left[(x_1,\ldots,x_{n-1}) \widehat{Q}_{j}
\left(\begin{array}{c}
    x_1  \\
    \vdots \\
    x_{n-1}
\end{array}
\right)
\right]x_n^{j-1},
\end{align}
where indeed we only need to consider $n-1$ such $\widehat{Q}_j$ matrices because the degree of $\widehat{F}$ with respect to $x_n$ is at most $n-2$.
Comparing \eqref{F hat definition} and \eqref{quadric representation of F hat} yields
\[
\widehat{Q}_j:=
\left(
\begin{array}{cccccc}
    Q_{n_1;j} & 0 & \cdots & \cdots & \cdots & 0\\
    0 & \ddots & \ddots &  &  & \vdots\\
    \vdots & \ddots & Q_{n_{\lceil\mu/2\rceil};j} & \ddots &  & \vdots\\
    \vdots &  & \ddots & -Q_{n_{\lceil\mu/2\rceil}+1;j} & \ddots & \vdots\\
    \vdots &  &   & \ddots & \ddots & 0 \\
    0 & \cdots & \cdots & \cdots & 0 & -Q_{n_{\mu};j}
\end{array}
\right).
\]
The matrix $X$ given by
\[
X:=
\frac{1}{\sqrt{2}}
\left(
\begin{array}{cccccc}
    \mathrm{Id}(n_1) & 0 & \cdots & \cdots & 0 &  \mathrm{Id(n_1)}\\
    0 & \ddots &  &  & \iddots & 0\\
    \vdots &  & \mathrm{Id}(n_{\mu/2}) & \mathrm{Id}(n_{\mu/2}) & & \vdots\\
    \vdots &  & \mathrm{Id}(n_{\mu/2}) & -\mathrm{Id}(n_{\mu/2}) & & \vdots\\
    0 & \iddots &   &   & \ddots& 0\\
    \mathrm{Id(n_1)} & 0 & \cdots & \cdots & 0 & -\mathrm{Id(n_1)}
\end{array}
\right)
\]
or, depending on the parity of $\mu$,
\[
X:=
\frac{1}{\sqrt{2}}
\left(
\begin{array}{ccccccc}
    \mathrm{Id}(n_1) & 0 & \cdots & \cdots & \cdots & 0 &  \mathrm{Id(n_1)}\\
    0 & \ddots &  & &  & \iddots & 0\\
    \vdots &  & \mathrm{Id}(n_{(\mu-1)/2}) & 0 & \mathrm{Id}(n_{(\mu-1)/2}) & & \vdots\\
    \vdots &  & 0  & \sqrt{2}\mathrm{Id}(n_{(\mu+1)/2}) & 0  & &  \vdots\\
    \vdots &  & \mathrm{Id}(n_{(\mu-1)/2}) & 0 & -\mathrm{Id}(n_{(\mu-1)/2}) & & \vdots\\
    0 & \iddots &   & &   & \ddots& 0\\
    \mathrm{Id(n_1)} & 0 & \cdots & \cdots & \cdots & 0 & -\mathrm{Id(n_1)}
\end{array}
\right),
\]
where $\mathrm{Id}(k)$ denotes the $k\times k$ identity matrix, satisfies
\[
Q_j=X^T\widehat{Q}_jX
\quad\quad\forall\, j.
\]
Hence, by \eqref{quadric representation of F} and \eqref{quadric representation of F hat},
if 
\[
\Psi(z_0,\ldots, z_n):=
\left(
\begin{array}{ccc}
    1 & 0 & 0 \\
    0 & X & 0 \\
    0 & 0 & 1
\end{array}
\right)
\left(
\begin{array}{c}
    z_0 \\
    \vdots \\
    z_n
\end{array}
\right)
\]
then
\[
\widehat{F}\big(\Psi(z)\big)= \sum_{j=1}^{n-1}
\left[(x_1,\ldots,x_{n-1}) X^{T}\widehat{Q}_{j}X
\left(\begin{array}{c}
    x_1  \\
    \vdots \\
    x_{n-1}
\end{array}
\right)
\right]x_n^{j-1}
=F(z).
\]
Therefore $F$ and $\widehat{F}$ define equivalent hypersurfaces, having a $2$-nondegenerate structure by Theorem \ref{second main theorem}.
\end{proof}

Lemma \ref{solutions to S lemma} actually describes all uniformly $2$-nondegenerate structures with a rank $1$ Levi kernel given by \eqref{FGF}, which is the content of the next Theorem.
\begin{theorem}\label{solution to S theorem}
A hypersruface defined by \eqref{FGF} is uniformly $2$-nondegenerate with a rank $1$ Levi kernel if and only if it
is equivalent to a hypersurface defined by \eqref{FGF} with $S$ having the form \eqref{general coeficient matrix}. In particular, \eqref{FGF} describes, up to local equivalence, at most $2^{p}-1$ uniformly $2$-nondegenerate hypersurfaces in $\mathbb{C}^{n+1}$ having rank $1$ Levi kernels, where $p=\left\lceil \tfrac{n-2}{2}\right\rceil$.
\end{theorem}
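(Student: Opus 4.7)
The ``if'' direction follows immediately from Lemma \ref{solutions to S lemma} together with Theorem \ref{second main theorem}: any $S$ of the form \eqref{general coeficient matrix} yields a hypersurface equivalent to one built by linking and extending defining functions from \eqref{main formula}, hence uniformly $2$-nondegenerate with rank $1$ Levi kernel---except for the all-ones composition $(1,1,\ldots,1)$, for which the quadratic part of $\Phi$ contains no factor of $x_n$ and the Levi kernel adjoint operator vanishes identically. The main task is therefore the converse together with the upper bound $2^p - 1$.

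For the converse, the plan is to analyze the Levi form \eqref{levi form} directly from \eqref{FGF}: its top-left $(n-1)\times(n-1)$ block is $2M(x_n)$ with $M(x_n)_{j,k}=S_{j,k}x_n^{n-j-k}$ for $j+k\leq n$ (and $0$ otherwise), which at the origin reduces to the antidiagonal matrix with entries $S_{j,n-j}$. Rank $n-1$ at the origin therefore forces $S_{j,n-j}\ne 0$ for all $j$, and a diagonal rescaling $x_j\mapsto \lambda_j x_j$ with $\lambda_j\lambda_{n-j}=1/S_{j,n-j}$ (using a $\mathbb{C}$-square-root when $n$ is even at the middle index) normalizes the antidiagonal to $1$. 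Next, exploiting that $\Phi$ is weighted-homogeneous of degree $n$ under the grading $w(x_j)=j$ for $j<n$, $w(x_n)=1$, I would use the corresponding weighted biholomorphic changes of coordinates on $\mathbb{C}^{n+1}$ to simplify the remaining entries $S_{j,k}$ with $j+k<n$.

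The main obstacle is showing that $2$-nondegeneracy forces $S$, after such transformations, into the antidiagonal block shape of \eqref{general coeficient matrix}. My strategy is to first compute the Levi kernel vector field $v$ from $M(x_n)v^\prime=0$ together with the last-row-and-column compatibility conditions, then derive the adjoint operator matrix $A$ via \eqref{LKAO WRT Phi}. Using the $2$-nondegeneracy constraint $A\ne 0$, I would iteratively eliminate any ``cross-block'' entries $S_{j,k}$ whose presence would obstruct a block-antidiagonal decomposition. The surviving nonzero pattern should force a splitting of $\{1,\ldots,n-1\}$ into consecutive intervals of sizes $(n_1,\ldots,n_\mu)$, and the palindromic constraint $n_j=n_{\mu+1-j}$ then emerges from the symmetry $S_{j,k}=S_{k,j}$.

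For the count $2^p-1$: palindromic compositions of $n-1$ correspond bijectively to subsets of the $n-2$ ``gap positions'' $\{1,\ldots,n-2\}$ between consecutive summands that are symmetric under the involution $j\mapsto n-1-j$. Since this involution has $\lceil(n-2)/2\rceil = p$ orbits, there are exactly $2^p$ such symmetric subsets and hence $2^p$ palindromic compositions. Excluding the all-ones composition, whose structure fails $2$-nondegeneracy as noted in the first paragraph, yields at most $2^p-1$ non-equivalent uniformly $2$-nondegenerate rank-$1$-kernel hypersurfaces described by \eqref{FGF}.
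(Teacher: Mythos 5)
Your opening normalization (rank $n-1$ at the origin forces $S_{j,n-j}\neq 0$, then rescale) and your count of palindromic compositions of $n-1$ via symmetric subsets of the $n-2$ gap positions are both correct, and you correctly identify the all-ones composition as the excluded degenerate case (it is exactly the case $S_{j,k}=0$ for all $j+k=n-1$, where $\Phi$ loses its dependence on $z_n$ and the hypersurface becomes holomorphically degenerate). However, the heart of the converse is missing. The decisive mechanism in the paper is that uniform Levi degeneracy means $\det H_{\mathcal{L}}$ vanishes \emph{identically} as a polynomial in $x_1,\ldots,x_n$; expanding the determinant of \eqref{general levi form expl} and equating coefficients of each power $x_n^m$ yields the recursion \eqref{LForm determinant formula xnm coef}, which expresses every antidiagonal $\{S_{r,s}: r+s=n-2-m\}$ as a universal function of the higher antidiagonals $\{S_{j,k}: j+k>n-2-m\}$. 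Hence $S$ is completely \emph{determined} by its top two antidiagonals, and after normalizing those to $\{0,1\}$ (with the $j+k=n$ entries forced to be $1$ and the $j+k=n-1$ entries not all $0$), the $2^p-1$ admissible settings reproduce exactly the block matrices \eqref{general coeficient matrix}. Your plan instead proposes to use weighted coordinate changes to ``simplify the remaining entries $S_{j,k}$ with $j+k<n$'' and to ``iteratively eliminate cross-block entries.'' This misreads the target: in \eqref{general coeficient matrix} the entries below the main antidiagonal that lie \emph{inside} a block are equal to $1$, not $0$ (e.g., the indecomposable model has $S_{j,k}=1$ for \emph{all} $j+k\le n$), so these entries cannot be normalized away --- they are invariantly forced by the degeneracy condition, and without an argument of the type \eqref{LForm determinant formula xnm coef} you have no way to conclude that the forced values are precisely the block pattern. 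The phrases ``should force a splitting'' and ``I would iteratively eliminate'' are placeholders for exactly the step that carries the proof.

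Two smaller points. First, your ``$\mathbb{C}$-square-root'' for a negative middle entry $S_{n/2,n/2}$ does not work: a non-real dilation $z_{n/2}\mapsto \lambda z_{n/2}$ mixes $x_{n/2}$ and $y_{n/2}$ and takes the hypersurface out of the family \eqref{FGF}; the paper instead flips the sign of the whole defining function by rescaling $z_0$ by $c_0=\pm 1$. Second, note that the normalization of the $j+k=n-1$ antidiagonal to $\{0,1\}$ also requires real diagonal scalings (the paper interleaves the two antidiagonals via the permutation $\sigma$ so that the available parameters $c_1,\ldots,c_{n-1}$ suffice); this is where the $p=\lceil(n-2)/2\rceil$ binary choices actually live, and it is these choices --- not a choice of composition made a priori --- that index the $2^p-1$ equivalence classes.
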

\begin{proof}

Let  $S$ be fixed such that \eqref{FGF} defines a uniformly $2$-nondegenerate hypersurface with a rank 1 Levi kernel. The terms in the set
\begin{align}\label{first set to normalize}
\left\{S_{j,k}x_jx_k\,|\, n-1\leq j+k\leq n,\, j\leq k\right\}
\end{align}
are normalized such that the coefficients $S_{j,k}$ belong to $\{0,1\}$ under the holomorphic change of variables $z\mapsto (c_0z_0,\ldots, c_{n-1}z_{n-1}, z_n)$ where the constant coefficients $\{c_j\}$ are defined recursively as follows. Define $\sigma:\{1,\ldots, n-1\}\to\{1,\ldots, n-1\}$ to be the permutation given by 
\begin{align}
\sigma\left(j\right):=
\left\lfloor\tfrac{n}{2}\right\rfloor-(-1)^{n+j}\left\lfloor \tfrac{j}{2} \right\rfloor\quad\quad\forall\, j\in\{1,\ldots, n-1\}.
\end{align}
Notice that the set in \eqref{first set to normalize} has the convenient ordering
\[
\left(S_{\sigma(1),\sigma(1)}x_{\sigma(1)}x_{\sigma(1)},S_{\sigma(1),\sigma(2)}x_{\sigma(1)}x_{\sigma(2)},\ldots, S_{\sigma(n-2),\sigma(n-1)}x_{\sigma(n-2)}x_{\sigma(n-1)}\right).
\]
Set
\[
c_0:=
\begin{cases}
1 & \mbox{ if }S_{\left\lfloor\tfrac{n}{2}\right\rfloor,\left\lfloor\tfrac{n}{2}\right\rfloor}\geq 0 \\
-1 & \mbox{ if }S_{\left\lfloor\tfrac{n}{2}\right\rfloor,\left\lfloor\tfrac{n}{2}\right\rfloor}< 0
\end{cases}
\quad\mbox{ and }\quad
c_{\sigma(1)}:=
\begin{cases}
1 & \mbox{ if }S_{\left\lfloor\tfrac{n}{2}\right\rfloor,\left\lfloor\tfrac{n}{2}\right\rfloor}= 0 \\
\left|S_{\left\lfloor\tfrac{n}{2}\right\rfloor,\left\lfloor\tfrac{n}{2}\right\rfloor}\right|^{\tfrac{-1}{2}} & \mbox{ if }S_{\left\lfloor\tfrac{n}{2}\right\rfloor,\left\lfloor\tfrac{n}{2}\right\rfloor}\neq 0
\end{cases}
\]
and, after defining $c_{\sigma(1)},\ldots, c_{\sigma(j-1)}$ for some $j\in\{2,\ldots, n-1\}$, define
\[
c_{\sigma(j)}:=
\begin{cases}
1 & \mbox{ if }S_{\sigma(j-1),\sigma(j)}= 0 \\
\frac{1}{c_{\sigma(j-1)}S_{\sigma(j-1),\sigma(j)}} & \mbox{ if }S_{\sigma(j-1),\sigma(j)}\neq 0.
\end{cases}
\]
Notice that this choice of $c_0$ normalizes the $S_{\sigma(1),\sigma(1)}$ coefficient making it non-negative, and indeed the subsequent choices of $c_1,\ldots,c_{n-1}$ ensure 
\begin{align}\label{bd normalization a}
S_{j,k}\in\{0,1\}\quad\quad\forall\,n-1\leq j+k\leq n.
\end{align}
Furthermore,
\begin{align}\label{bd normalization b}
S_{j,k}=1\quad\quad\forall\  j+k= n
\end{align}
because we are assuming that the Levi kernel is $1$-dimensional everywhere, and having $S_{j,k}=0$ for some $j+k=n$ would result in a higher-dimensional kernel at the point $z=0$.

By \eqref{levi form} with
\begin{align}\label{general formula alt}
\Phi\left(z_1,\ldots, z_n,\overline{z_1},\ldots, \overline{z_{n}}\right):=\sum_{0<j<j+k\leq n}S_{j,k}x_jx_kx_n^{n-j-k},
\end{align}
the hypersurface $M=\left\{\left.z\in\mathbb{C}^{n+1}\,\right|\,\Re(z_0)=\Phi(z_1,\ldots,z_{n},\overline{z_1},\ldots, \overline{z_{n}})\right\}$ has Levi form represented by the matrix $H_{\mathcal{L}}$ equal to
\begin{align}\label{general levi form expl}
\bgroup
\arraycolsep=3pt
\left(
\begin{array}{ccccc;{2pt/2pt}c}
S_{1,1}x_n^{n-2}& S_{1,2}x_n^{n-3} & \cdots & S_{1,n-2}x_n& S_{1,n-1} & \sum_{j=1}^{n-2}jS_{n-1-j,1}x_{n-1-j}x_n^{j-1}\\
S_{2,1}x_n^{n-3} & & \iddots & S_{2,n-2} &0& \vdots \\
\vdots & \iddots& \iddots & \iddots   & \vdots & \sum_{j=1}^{2}jS_{3-j,n-3}x_{3-j}x_n^{j-1}\\
S_{n-2,1} x_n & S_{n-2,2}& \iddots & &  \vdots & \sum_{j=1}^{1}jS_{2-j,n-2}x_{2-j}x_n^{j-1}\\
S_{n-1,1} & 0& 0 &\cdots & 0 & 0 \\\hdashline[2pt/2pt]
\left( H_{\mathcal{L}}\right)_{n,1} &\cdots & \cdots & \left( H_{\mathcal{L}}\right)_{n,n-1} & 0 & \sum\limits_{j=1}^{n-3}\binom{j+1}{2}\left(\sum\limits_{ r+s=n-1-j}S_{r,s}x_rx_sx_n^{j-1}\right)
\end{array}
\right).
\egroup
\end{align}

Applying \eqref{bd normalization b} to simplify $H_{\mathcal{L}}$, its determinant simplifies to
\begin{align}\label{LForm determinant formula}
|H_{\mathcal{L}}|&=(-1)^n\left(H_{\mathcal{L}}\right)_{n,n}+\sum_{j=2}^{n-2}(-1)^{n+j+1}\left(H_{\mathcal{L}}\right)_{n-j,n}\left(H_{\mathcal{L}}\right)_{n,j},
\end{align}
which is equivalent to
\begin{align}\label{LForm determinant formula rearanged}
\left(H_{\mathcal{L}}\right)_{n,n}=\sum_{j=2}^{n-2}(-1)^{j}\left(H_{\mathcal{L}}\right)_{n-j,n}\left(H_{\mathcal{L}}\right)_{n,j}
\end{align}
because the Levi-degeneracy assumption implies $|H_{\mathcal{L}}|=0$.
For every nonnegative integer $m$, we can equate the coefficients of $x_n^m$ in \eqref{LForm determinant formula rearanged}, obtaining
\begin{align}\label{LForm determinant formula xnm coef}
\left(\sum\limits_{r+s=n-2-m}S_{r,s}x_rx_s\right)=\frac{2}{(m+2)!}\left.\frac{\partial^m}{\partial z_n^m}\right|_{\{z_n=0\}}\left(\sum_{j=2}^{n-2}(-1)^{j}\left(H_{\mathcal{L}}\right)_{n-j,n}\left(H_{\mathcal{L}}\right)_{n,j}\right).
\end{align}
The assumption that $|H_{\mathcal{L}}|=0$ is equivalent to \eqref{LForm determinant formula xnm coef} holding for all $m$.

Notice that, for each $r+s=n-2-m$, the coefficient of $x_{r}x_s$ on the left side of \eqref{LForm determinant formula xnm coef} is either $S_{r,s}$ or $2S_{r,s}$, whereas coefficients of $x_{r}x_s$ on the right side of \eqref{LForm determinant formula xnm coef} are functions of the parameters in $\{S_{j,k}\,|\,j+k>n-2-m\}$. This latter fact is easily observed from the general formula for $H_{\mathcal{L}}$ recorded above. 

Notice also that if $S_{j,k}=0$ for all $j+k= n-1$ then $\Phi$ does not define a uniformly $2$-nondegenerate hypersurface, that is, noting \eqref{bd normalization a},
\begin{align}\label{bd normalization c}
\sum_{j+k= n-1} S_{j,k}\neq0.
\end{align}
This can be seen in two ways, one by directly checking the criteria for $2$-nondegeracy at $z=0$, and the other by observing that \eqref{LForm determinant formula xnm coef} with $S_{j,k}=0$ for all $j+k= n-1$  implies  $S_{j,k}=0$ for all $j+k\leq n-1$, consequently implying that $\Phi$ defines a holomorphically degenerate hypersurface because it has no dependence on $z_n$.

In other words, by \eqref{LForm determinant formula xnm coef}, $S$ is fully determined by $\{S_{j,k}\,|\,j+k>n-2\}$. We normalized this set $\{S_{j,k}\,|\,j+k>n-2\}$ such that \eqref{bd normalization a}, \eqref{bd normalization b} and \eqref{bd normalization c} hold. The $2^p-1$ solutions, with $p=\left\lceil \tfrac{n-2}{2}\right\rceil$, referred to in the theorem's statement thus correspond to the $2^p-1$ possible parameter settings that can be assigned to $\{S_{j,k}\,|\,j+k>n-2\}$ satisfying \eqref{bd normalization a}, \eqref{bd normalization b} and \eqref{bd normalization c}. Each solution corresponds to one of the possible formulas for $S$ in \eqref{general coeficient matrix}.
\end{proof}

\section{Left-invariant structures}\label{Left-invariant structures on Lie groups}
The most symmetric among the homogeneous hypersurfaces of Theorem \ref{h structures theorem} turn out to be left-invariant structures on nilpotent Lie groups. In this section we describe these left-invariant structures along with their symmetry groups, obtaining preliminary results that will then be applied in Section \ref{Holomorphic classification}.

For the purposes of this section and Section \ref{Holomorphic classification} only, we switch notations, favoring \eqref{alternate notation}, and denote the coordinates as 
\[
(w,z_1,\ldots, z_{n-1},\zeta)\in\mathbb{C}^{n+1},\quad \mbox{ with }\quad \z=s+it 
\quad\mbox{ and }w=u+iv.
\]

We will analyze now the \emph{indecomposable model} $M_0$ given by 
\begin{align}\label{model}
M_0:=\left\{u=\sum_{0<j<j+k\leq n}x_jx_ks^{n-j-k}\right\}.
\end{align}
Consider the gradings
\begin{equation}\label{grading}
[\z]=1,\quad [w]=n,\quad [z_j]=j,\quad\quad\forall\,j=1,...,n-1.   \end{equation}
and 
\begin{equation}\label{grading0}
[\z]=0,\quad [w]=2,\quad [z_j]=1,\quad\quad\forall\,j=1,...,n-1. 
\end{equation}
The model $M_0$ has a weighted homogeneous defining equation with respect to both gradings \eqref{grading} and \eqref{grading0}, which has strong implications we will see for the structure of its algebra $\hol(M_0,0)$ of infinitesimal symmetries at $0$, namely, that the algebra admits a corresponding bi-grading.

This infinitesimal automorphism algebra $\hol(M_0,0)$ at the point $0\in M_0$ contains the bi-graded subalgebra
\begin{equation}\label{bi-graded}
\g=\g_{-n,-2}\oplus\left(\bigoplus_{j=1-n}^{-1}\g_{j,-1}\right)\oplus\g_{2-n,0}\oplus\g_{-1,0}\oplus\g_{0,0}  
\end{equation}
given by 
\begin{align}\label{gj}
&\g_{-n,-2}=\mbox{span}\left\{ Y_{-n} \right\},\quad \g_{-j,-1}=\mbox{span}\left\{ X_{-j},Y_{-j} \right\},\quad \forall\, j=1,...,n-1,\\
&\g_{2-n,0}=\mbox{span}\left\{V_{2-n}\right\},\quad\g_{-1,0}=\mbox{span}\left\{X'_{-1},Y'_{-1} \right\},\quad\mbox{ and }\quad \g_{0,0}=\mbox{span}\left\{ U_{0},V_{0} \right\},
\end{align}
where
\begin{align}\label{symmetry formulas a}
Y_{-n}=i\dw,\,Y'_{-1}=i\dzz,\,Y_{-j}=i\dz{j},\quad\forall\,j=1,...,n-1,
\end{align}
\begin{align}\label{symmetry formulas b}
X_{1-n}=\dz{n-1}+2z_1\dw,\,X_{-j}=\dz{j}-\z\dz{j+1}+2z_{n-j}\dw,\quad\forall\,j=1,...,n-2,
\end{align}
\begin{align}\label{symmetry formulas c}
X'_{-1}=2\dzz+\sum_{j=2}^{n-1}(2j-2-n)z_{j-1}\dz{j},\,V_{2-n}=iz_1^2\dw+iz_1\dz{n-1},
\end{align}
\begin{align}\label{symmetry formulas d}
U_0=\sum_{j=1}^{n-1}jz_j\dz{j}+\z\dzz+nw\dw,
\quad\mbox{ and }\quad
V_0=\sum_{j=1}^{n-1}z_j\dz{j}+2w\dw.
\end{align}

The bi-grading in \eqref{bi-graded} is naturally associated with the two gradings \eqref{grading} and \eqref{grading0} via the rule that vector fields are assigned weights such that
\begin{align}\left[\dzz\right]=-[\zeta],\quad \left[\dw\right]=-[w],\quad\left[\dz{j}\right]=-[z_j],\quad\quad\forall\,j=1,...,n-1,
\end{align}
yielding a different grading for each convention \eqref{grading} and \eqref{grading0}.
The grading \eqref{grading} is associated with the vector field $U_0$, as the coordinates in \eqref{grading} parameterize subspaces in the common eigenspaces of the weighted dilations generated by $U_0$ and the tuple $([\zeta],[w],[z_1],\ldots, [z_{n-1}])$ is a rescaling of the corresponding tuple of natural logarithms of eigenvalues for any one of those dilations. Similarly, \eqref{grading0} is associated with $V_0$.

The following lemma is obtained from straightforward, direct calculation, so we omit its proof here.
\begin{lemma}\label{model Lie group structure}$\,$

\begin{enumerate}
\item The vector fields
\[
e_j:=X_{-j}+iY_{-j},\quad e_{n-1+j}:=X_{-j}-iY_{-j}\quad\forall\,j=1,\ldots,n-1,
\]
\[
e_0:=4iY_{-n},\quad e_{2n-1}:=-X^\prime_{-1}-2iY^\prime_{-1}
\quad\mbox{ and }e_{2n}:=-X^\prime_{-1}+2iY^\prime_{-1},
\]
span a Lie algebra $\mathfrak{m}:=\mbox{span}_{\mathbb{C}}\{e_0,\ldots,e_{2n}\}$, with a graded decomposition $\mathfrak{m}=\mathfrak{m}_{-2}\oplus \mathfrak{m}_{-1}\oplus \mathfrak{m}_{0}$ given by $\mathfrak{m}_{-2}=\mbox{span}_{\mathbb{C}}\{e_{0}\}$, $\mathfrak{m}_{-1}=\mbox{span}_{\mathbb{C}}\{e_1,\ldots,e_{2n-2}\}$, and $\mathfrak{m}_{0}=\mbox{span}_{\mathbb{C}}\{e_{2n-1},e_{2n}\}$.
\item The subalgebra $\mathfrak{m}_{-}:=\mathfrak{m}_{-2}\oplus \mathfrak{m}_{-1}$, is a Heisenberg algebra with nonzero Lie brackets given by 
\[
[e_j,e_{2n-1-j}]=e_0\quad\forall\,j=1,\ldots, n-1.
\]
\item To describe the other Lie brackets in $\mathfrak{m}$, note that the adjoint representation of $\mathfrak{m}$ identifies $\mathfrak{m}_{0}$ with an abelian subalgebra in $\mathfrak{der}(\mathfrak{m}_{-})\cong\mathfrak{csp}(\mathfrak{m}_{-1})\subset \mathfrak{gl}(\mathfrak{m}_{-1})$ whose matrix representation with respect to the basis $(e_1,\ldots, e_{2n-2})$ is given by
\[
e_{2n-1}=
\left(
\begin{array}{c;{2pt/2pt}c}
 \Delta_{2-n,4-n,\ldots, n-2}  T_{n-1} & 2T_{n-1} \\\hdashline[2pt/2pt]
   0  & T_{n-1}\Delta_{2-n,4-n,\ldots, n-2}
\end{array}
\right)
\]
and
\[
e_{2n}=
\left(
\begin{array}{c;{2pt/2pt}c}
T_{n-1}\Delta_{2-n,4-n,\ldots, n-2} & 0 \\\hdashline[2pt/2pt]
   2T_{n-1}  &  \Delta_{2-n,4-n,\ldots, n-2}  T_{n-1}
\end{array}
\right)
\]
where $T_k$ denotes the $k\times k$ nilpotent Jordan matrix in \eqref{fundamental toeplitz} and $\Delta_{a_1,\ldots, a_k}$ denotes the diagonal matrix with values $a_1,\ldots, a_k$ along its main diagonal.
\item
The adjoint representation $\mbox{ad}_{V_0}$ of $V_0$ on $\mathfrak{m}$ has eigenspaces $\mathfrak{m}_{-2}$, $\mathfrak{m}_{-1}$, and $\mathfrak{m}_{0}$ with eigenvalue $j$ on $\mathfrak{m}_{j}$.
\end{enumerate}
\end{lemma}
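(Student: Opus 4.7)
The entire lemma reduces to direct bracket computations using the explicit formulas in \eqref{symmetry formulas a}--\eqref{symmetry formulas d}, and my plan is to let the bi-grading of \eqref{bi-graded} do most of the bookkeeping. As a first step I would record the weight of each proposed basis vector under the grading \eqref{grading0}: $e_0$ has weight $-2$, each of $e_1,\ldots,e_{2n-2}$ has weight $-1$, and $e_{2n-1},e_{2n}$ have weight $0$. Since the defining equation of $M_0$ is weighted homogeneous with respect to \eqref{grading0}, the Lie bracket preserves these weights; combined with the absence of a weight $-3$ summand, this alone forces all brackets with at least one factor in $\mathfrak{m}_{-2}$ to vanish and produces the graded decomposition of part (1) as soon as closure under the bracket is verified.

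For part (2), the only brackets requiring genuine computation are $[X_{-j},Y_{-k}]$ for $j,k\in\{1,\ldots,n-1\}$. Among the three summands of $X_{-j}$, only $2z_{n-j}\partial_w$ contributes against $i\partial_{z_k}$, yielding $[X_{-j}, i\partial_{z_k}] = 2i\delta_{k,n-j}\partial_w$. Expanding $e_k = X_{-k}+iY_{-k}$ and $e_{2n-1-k} = X_{-(n-k)}-iY_{-(n-k)}$ and collecting the four contributions gives $[e_k,e_{2n-1-k}] = -4\partial_w = e_0$, while every other bracket within $\mathfrak{m}_{-1}$ vanishes, either because neither side contributes or because the two contributions cancel by antisymmetry.

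Part (3) is the main computational obstacle. I would first simplify: computing $-X'_{-1}\mp 2iY'_{-1}$ directly shows that the $\partial_\zeta$ contributions cancel in $e_{2n-1}$ but reinforce in $e_{2n}$, so that $e_{2n-1}=\sum_{k=2}^{n-1}(n-2k+2)z_{k-1}\partial_{z_k}$ while $e_{2n}=-4\partial_\zeta+\sum_{k=2}^{n-1}(n-2k+2)z_{k-1}\partial_{z_k}$. A Leibniz-rule calculation then yields $[e_{2n-1},X_{-j}] = aX_{-(j+1)}+bY_{-(j+1)}$ and $[e_{2n-1},Y_{-j}] = cY_{-(j+1)}$ for explicit integer scalars $a,b,c$ depending on $j$ and $n$, and analogously for $e_{2n}$. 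Passing to the basis $(e_{j+1}, e_{n+j})$ via $X_{-(j+1)} = \tfrac{1}{2}(e_{j+1}+e_{n+j})$ and $iY_{-(j+1)} = \tfrac{1}{2}(e_{j+1}-e_{n+j})$ assembles the columns of the two block matrices asserted in the lemma; commutativity $[e_{2n-1},e_{2n}]=0$ and the inclusion in $\mathfrak{csp}(\mathfrak{m}_{-1})\subset \mathfrak{gl}(\mathfrak{m}_{-1})$ then follow by direct inspection of the resulting block matrices together with the symplectic structure of part (2). The most error-prone aspect is matching the subdiagonal convention for $T_{n-1}$ to the direction of the shift $\partial_{z_j}\mapsto \partial_{z_{j+1}}$ and tracking signs through the complex change of basis.

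For part (4), the eigenvalue claim follows immediately from the weight assignments of the first paragraph together with the observation that $V_0$ is the Euler vector field generating the one-parameter dilation associated with \eqref{grading0}: on any vector field homogeneous of weight $\ell$ under \eqref{grading0} the bracket $[V_0,\cdot]$ acts by multiplication by $\ell$. A one-line verification using $[V_0,\partial_{z_j}]=-\partial_{z_j}$ and $[V_0,\partial_w]=-2\partial_w$ confirms the stated eigenvalues on each weight summand.
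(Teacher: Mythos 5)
Your proposal is correct and coincides with the paper's approach: the paper explicitly omits the proof of Lemma \ref{model Lie group structure} as a ``straightforward, direct calculation,'' and your weight-graded organization of that calculation checks out — in particular $[e_{2n-1},e_j]=(2j+2-n)e_{j+1}$ and $[e_{2n-1},e_{n-1+j}]=2e_{j+1}+(2j-n)e_{n+j}$ reproduce exactly the stated block matrices, and $e_{2n}-e_{2n-1}=-4\frac{\partial}{\partial\zeta}$ makes $[e_{2n-1},e_{2n}]=0$ immediate. The only blemish is the intermediate sign $[X_{-j},i\frac{\partial}{\partial z_{k}}]=+2i\delta_{k,n-j}\frac{\partial}{\partial w}$, which should be $-2i\delta_{k,n-j}\frac{\partial}{\partial w}$; your stated final value $[e_k,e_{2n-1-k}]=-4\frac{\partial}{\partial w}=e_0$ is the one consistent with the correct sign (reading $e_0:=4iX_{-n}$ as the evident typo for $4iY_{-n}$).
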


The holomorphic fields $\{Y_{-n},X_{-j},Y_{-j},X^\prime_{-1},Y^\prime_{-1}\,|\,j=1,\ldots,n-1\}$ span a real form of the complex nilpotent Lie algebra $\mathfrak{m}$ in Lemma \ref{model Lie group structure}, so 
\[
\Re \mathfrak{m}:=\mathrm{span}_{\mathbb{R}}\left\{\Re(Y_{-n}),\Re(X_{-j}),\Re(Y_{-j}),\Re\left(X^\prime_{-1}\right),\Re\left(Y^\prime_{-1}\right)\,|\,j=1,\ldots,n-1\right\}
\]
is a $\dim(M_0)$-dimensional Lie algebra of nowhere vanishing infinitesimal symmetries on $M_0$. By equipping $M_0$ with the Lie group structure such that the fields in $\Re \mathfrak{m}$ are right-invariant, the CR structure on $M_0$ becomes left-invariant.

\begin{lemma}\label{grading lemma for model symmetry algebra}
The symmetry algebra $\mathbb{C}\otimes\hol(M_0,0)$ is $\mathbb{Z}$-graded through the decomposition into eigenspaces of $\mbox{ad}_{V_0}$ assigning respective eigenvalues as weights. In particular this grading is compatible with the secondary indices in the bi-grading \eqref{bi-graded}. The nontrivial negatively weighted components in this grading are 
\begin{align}\label{negative parts of symmetry alg}
\left(\mathbb{C}\otimes\hol(M_0,0)\right)_{-2}=\mathfrak{m}_{-2}
\quad\mbox{ and }\quad
\left(\mathbb{C}\otimes\hol(M_0,0)\right)_{-1}=\mathfrak{m}_{-1}
,
\end{align}
and if $n>4$ then the positively weighted components are all trivial.
\end{lemma}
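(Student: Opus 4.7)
My plan has three stages. First, I would note that the defining function $\Phi = \sum x_j x_k s^{n-j-k}$ of $M_0$ is weighted homogeneous of weight $2$ with respect to \eqref{grading0}, so $V_0$ (the weighted Euler field of \eqref{grading0}) belongs to $\hol(M_0,0)$ and its flow $\delta_t$ consists of CR-dilations preserving $M_0$. Consequently $\mathrm{ad}_{V_0}$ is a derivation of $\hol(M_0,0)$. Given any symmetry $X$, expanding its coefficients in formal Taylor series and regrouping by weighted homogeneity yields $X = \sum_k X_k$ with $[V_0, X_k] = k X_k$; since $\delta_t$ sends symmetries to symmetries and the scalings $e^{kt}$ for distinct integer $k$ are linearly independent, each $X_k$ is itself in $\hol(M_0,0)$. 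This produces the $\mathbb{Z}$-grading, and compatibility with the secondary index in \eqref{bi-graded} follows by directly checking the weight of each generator from the formulas \eqref{symmetry formulas a}--\eqref{symmetry formulas d}.

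Second, I would identify the negative components by enumerating weight-$(-k)$ vector fields. Any holomorphic field $X = f_\z \dzz + \sum_j f_j \dz{j} + f_w \dw$ with $[V_0, X] = -kX$ satisfies $w(f_i) = w(z_i) - k$, so the nonnegativity of $w(f_i)$ for polynomial $f_i$ severely restricts the possibilities. For $k = 2$ the only surviving term is $X = g(\z) \dw$ with holomorphic $g$, and the tangency condition $(X + \bar X)(w + \bar w - 2\Phi)|_{M_0} = 0$ reduces to $\Re g(\z) \equiv 0$, forcing $g$ to be an imaginary constant and yielding $(\mathbb{C}\otimes\hol(M_0,0))_{-2} = \mathbb{C} \cdot Y_{-n} = \mathfrak{m}_{-2}$. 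For $k = 1$, the general admissible form is $X = \sum_j a_j(\z) \dz{j} + \bigl(\sum_\ell b_\ell(\z) z_\ell\bigr) \dw$, and extracting from the tangency condition the polynomial identities in $x_1,\ldots,x_{n-1},s,t$ produces a linear system whose solution space is precisely the $(2n-2)$-dimensional $\mathfrak{m}_{-1}$ exhibited in \eqref{symmetry formulas a}--\eqref{symmetry formulas b}.

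The third stage, which I expect to be the main obstacle, is proving $(\mathbb{C}\otimes\hol(M_0,0))_{+k} = 0$ for all $k \geq 1$ when $n > 4$. I would first handle the weight $+1$ part by the same enumeration technique applied to a substantially larger ansatz: the coefficients $f_\z$, $f_j$, $f_w$ are polynomials of weighted degrees $1$, $j+1$, $3$ respectively with coefficients in $\z$. Substituting the ansatz into $(X + \bar X)(w + \bar w - 2\Phi)|_{M_0} = 0$ and matching coefficients of each monomial $x_1^{\alpha_1}\cdots x_{n-1}^{\alpha_{n-1}} s^\beta t^\gamma$ produces a highly overdetermined linear system in the remaining free constants. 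The content of the condition $n > 4$ is precisely that this system admits only the trivial solution, while for $n \in \{3,4\}$ additional positive-weight symmetries appear, accounting for the exceptional behavior noted in Remark \ref{homogeneous structures remark}. Once the weight $+1$ part is shown trivial, the vanishing of higher positive weights follows: a hypothetical nonzero weight-$k$ field $X$ with $k \geq 2$ would, after $k - 1$ iterated brackets with elements of $\mathfrak{m}_{-1}$ chosen so as to repeatedly lower the weight, produce a nonzero weight-$1$ element (the faithfulness of this action rests on the standard filtration argument that a symmetry whose adjoint action kills all of $\mathfrak{m}_-$ must vanish, since $\mathfrak{m}_-$ already spans the tangent space of $M_0$ at the origin). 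The weight $+1$ enumeration is the technically delicate step, but its combinatorial structure is uniform across all $n > 4$.
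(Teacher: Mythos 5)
Your overall route is genuinely different from the paper's: the paper does not compute anything here, but instead observes that $\mathfrak{m}$ (as described in Lemma \ref{model Lie group structure}) satisfies the axioms of an abstract reduced modified symbol in the sense of \cite[Definition 6.1]{sykes2023geometry}, so that $M_0$ is the flat structure with that symbol, and then imports the grading, the negative components, and (for $n>4$) the vanishing of the positive components from \cite[Theorem 6.2]{sykes2023geometry} and \cite[Theorem 3.7]{sykes2022maximal}. Your first two stages --- the grading via the flow of $V_0$ and the enumeration of the weight $-1$ and $-2$ components from the tangency condition --- are correct and would go through as routine computations; a self-contained argument of this kind would be a legitimate alternative to the citation-based proof.

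The gap is in your third stage, which is exactly where the hypothesis $n>4$ does real work. First, the weight $+1$ vanishing is the crux of the lemma and you do not prove it: you set up the ansatz and assert that the resulting overdetermined linear system has only the trivial solution for $n>4$, but that assertion \emph{is} the claim to be established, and nothing in your write-up verifies it or locates where $n\in\{3,4\}$ fail (cf.\ the extra symmetry \eqref{c5 exceptional symmetry} for $n=4$). Second, your bootstrap from weight $+1$ to all positive weights rests on a false statement: $\Re\mathfrak{m}_-$ evaluated at the origin spans only a codimension-two subspace of $T_0M_0$ (it misses the $\partial_s$ and $\partial_t$ directions, which come from $X'_{-1},Y'_{-1}\in\mathfrak{m}_0$), so the principle ``a symmetry whose adjoint action kills $\mathfrak{m}_-$ vanishes because $\mathfrak{m}_-$ spans the tangent space'' is not available. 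Moreover the induction as stated does not close at $k=2$: for a weight-$2$ field $X$ the bracket $[Y_{-n},X]$ lands in weight $0$, which is not controlled by the vanishing of weight $+1$. The reduction can be repaired --- if $[Y_{-j},X]=0$ for $j=1,\dots,n-1$ then the coefficients of $X$ depend only on $w$ and $\zeta$, and positivity of the weight together with the brackets $[X_{-j},X]=0$ (and, for $k\geq 3$, $[Y_{-n},X]=0$) forces $X=0$ --- but as written both the justification and the induction step are incorrect, and the central weight-$+1$ computation is missing.
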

\begin{proof}
Noting Lemma \ref{model Lie group structure}, this result follows readily from the main theorems in \cite{sykes2022maximal} and \cite{sykes2023geometry}.

In more detail, from the description of the subalgebra $\mathfrak{m}\subset \mathbb{C}\otimes\hol(M_0,0)$ in Lemma \ref{model Lie group structure}, it is immediately seen that $\mathfrak{m}$ satisfies the axioms of an \emph{abstract reduced modified symbol (ARMS)} given in \cite[Definition 6.1]{sykes2023geometry}, and hence the CR structure on $M_0$ is the flat structure with this ARMS, that is, a structure of the type described in \cite[part 3 of Theorem 6.2]{sykes2023geometry}. In particular, \cite[part 3 of Theorem 6.2]{sykes2023geometry} implies that $\mathbb{C}\otimes\hol(M_0,0)$ has the $\mathbb{Z}$-grading conferred by $\mbox{ad}_{V_0}$ with negative parts given by \eqref{negative parts of symmetry alg}. Vanishing of the positive weight components follows immediately from \cite[Theorem 3.7]{sykes2022maximal} and \cite[Theorem 6.2]{sykes2023geometry}. We require $n>4$ for this last conclusion so that \cite[Theorem 3.7]{sykes2022maximal} applies. Specifically, \cite[Theorem 3.7]{sykes2022maximal} does not apply if $n=3$ or $n=4$ because it is in exactly these cases that there structure's CR symbol is \emph{regular}.
\end{proof}

\begin{corollary}\label{model symmetries lemma corollary}
If $n>4$ then there exists a subspace $\mathfrak{I}\subset \{X\in\mathfrak{hol}(\mathbb{C}^{n+1},0)\,|\,[X,V_{0}]=0,X(0)=0\}$ such that
\[
\hol(M_0,0)=\mbox{span}_{\mathbb{R}}\{Y_{-n},X_{-j},Y_{-j},X^\prime_{-1},Y^\prime_{-1},U_0,V_0,V_{2-n}\,|\,j=1,\ldots,n-1\}\oplus \mathfrak{I}.
\]
\end{corollary}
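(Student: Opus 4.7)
The plan is to combine the $V_0$-weight decomposition from Lemma \ref{grading lemma for model symmetry algebra} with an evaluation-at-origin argument. Since $n>4$, only weights $-2,\,-1,\,0$ occur in $\mathbb{C}\otimes\hol(M_0,0)$, and by \eqref{negative parts of symmetry alg} the weight $-2$ and $-1$ components of the complexification are exactly $\mathfrak{m}_{-2}$ and $\mathfrak{m}_{-1}$. Using the explicit descriptions in Lemma \ref{model Lie group structure} and the fact that $V_0$ is itself real so that the grading descends from $\mathbb{C}\otimes\hol(M_0,0)$ to $\hol(M_0,0)$, the real forms of these two components are identified with $\mbox{span}_{\mathbb{R}}\{Y_{-n}\}$ and $\mbox{span}_{\mathbb{R}}\{X_{-j},Y_{-j}\,|\,j=1,\ldots,n-1\}$, respectively. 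So everything outside weight $0$ is already accounted for by the explicit list.

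It therefore suffices to describe the weight-$0$ component $\g_0:=\{X\in\hol(M_0,0)\,|\,[X,V_0]=0\}$. Direct inspection of the formulas \eqref{symmetry formulas a}--\eqref{symmetry formulas d} confirms that $X'_{-1},Y'_{-1},V_{2-n},U_0,V_0$ all commute with $V_0$ and hence lie in $\g_0$. The key step is to show that the evaluation map $\epsilon_0\colon \g_0\to T_0\mathbb{C}^{n+1}$ has image contained in the real span of $\dzz|_0$ and $i\,\dzz|_0$. Writing an arbitrary $X=a_w\dw+\sum_{j=1}^{n-1}a_j\dz{j}+a_\zeta\dzz$ and imposing $[V_0,X]=0$ yields the ODEs $V_0(a_w)=2a_w$, $V_0(a_j)=a_j$, and $V_0(a_\zeta)=0$; i.e., $a_w$, $a_j$, $a_\zeta$ are weighted-homogeneous of respective degrees $2$, $1$, $0$ with respect to $[w]=2$, $[z_j]=1$, $[\z]=0$. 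Evaluated at the origin these degree constraints give $a_w(0)=a_j(0)=0$ while leaving $a_\zeta(0)$ free, so $\epsilon_0(\g_0)\subset\mbox{span}_{\mathbb{R}}\{\dzz|_0,\,i\,\dzz|_0\}$. Since $X'_{-1}(0)=2\dzz|_0$ and $Y'_{-1}(0)=i\,\dzz|_0$ already span this subspace over $\mathbb{R}$, the restriction $\epsilon_0|_{\mbox{span}_{\mathbb{R}}\{X'_{-1},Y'_{-1}\}}$ is surjective onto $\epsilon_0(\g_0)$.

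To conclude, choose $\mathfrak{I}$ to be any $\mathbb{R}$-linear complement of $\mbox{span}_{\mathbb{R}}\{V_{2-n},U_0,V_0\}$ inside $\ker\epsilon_0\cap\g_0$. By construction $\mathfrak{I}\subset\{X\in\mathfrak{hol}(\mathbb{C}^{n+1},0)\,|\,[X,V_0]=0,\,X(0)=0\}$, and since $\mbox{span}_{\mathbb{R}}\{X'_{-1},Y'_{-1}\}$ surjects onto the image of $\epsilon_0$ while the remaining listed weight-$0$ fields generate $\mbox{span}_{\mathbb{R}}\{V_{2-n},U_0,V_0\}\subset\ker\epsilon_0\cap\g_0$, one obtains
\[
\g_0=\mbox{span}_{\mathbb{R}}\{X'_{-1},Y'_{-1},V_{2-n},U_0,V_0\}\oplus\mathfrak{I}.
\]
Assembling this with the identifications of the weight $-2$ and $-1$ components above gives the stated decomposition of $\hol(M_0,0)$. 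The only technical point requiring care is the descent of the grading of Lemma \ref{grading lemma for model symmetry algebra} from the complexification to the real form, which is ensured by the reality of $V_0$ and by its integer eigenvalues.
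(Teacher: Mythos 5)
Your proof is correct and follows essentially the same route the paper intends: Lemma \ref{grading lemma for model symmetry algebra} kills the positive weights and identifies the negative ones with $\mathfrak{m}_{-2}\oplus\mathfrak{m}_{-1}$, and the weight-zero part is handled exactly as in the paper's subsequent proof of Theorem \ref{modelalgebra}, where a field commuting with $V_0$ is shown to have the form \eqref{Catlin multitype weight zero field} and hence can only be nonzero at the origin in the $\dzz$ direction, which $X'_{-1}$ and $Y'_{-1}$ already account for. The evaluation-map bookkeeping and the descent of the $\mathbb{Z}$-grading to the real form are both handled correctly, so nothing is missing.
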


We shall now prove
\begin{theorem}\label{modelalgebra}
For $n>4$, the symmetry algebra of the indecomposable model $M_0\subset\mathbb{C}^{n+1}$ (as in \eqref{model}) is exactly $\g$ (as in \eqref{bi-graded}), that is,
\[
\hol(M_0,0)=\mbox{span}_{\mathbb{R}}\{Y_{-n},X_{-j},Y_{-j},X^\prime_{-1},Y^\prime_{-1},U_0,V_0,V_{2-n}\,|\,j=1,\ldots,n-1\}.
\]
\end{theorem}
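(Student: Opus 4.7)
By Corollary \ref{model symmetries lemma corollary}, the theorem reduces to showing that $\mathfrak{I} = \{0\}$. My plan is to refine the $V_0$-grading from Lemma \ref{grading lemma for model symmetry algebra} by introducing a second $\mathbb{Z}$-grading on $\mathbb{C}\otimes\hol(M_0,0)$ induced by $\mathrm{ad}_{U_0}$. Since $M_0$ is weighted homogeneous with respect to the grading \eqref{grading} (directly verified via $(U_0 + \bar U_0)(u - \Phi) = n(u - \Phi)$), the field $\Re U_0$ is an infinitesimal automorphism of $M_0$, so $\mathrm{ad}_{U_0}$ diagonalizes $\mathbb{C}\otimes\hol(M_0,0)$ with integer weights. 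Because $[U_0, V_0] = 0$, the two gradings commute, producing a bi-grading of $\mathbb{C}\otimes\hol(M_0,0)$ refining \eqref{bi-graded}. The subspace $\{X \in \hol(M_0, 0) : [X, V_0] = 0, X(0) = 0\}$ is $\mathrm{ad}_{U_0}$-invariant (because $U_0(0)=0$ and $X(0)=0$ imply $[U_0, X](0) = 0$), so we may select a complement $\mathfrak{I}$ of $\mathfrak{g}$ in $\hol(M_0, 0)$ inside this subspace that is also $\mathrm{ad}_{U_0}$-invariant, and decompose $\mathbb{C}\otimes\mathfrak{I} = \bigoplus_k \mathfrak{I}_k$ by $U_0$-weight.

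For each $k$, elements $X \in \mathfrak{I}_k$ have a very restricted coordinate form: the condition $[V_0, X] = 0$ together with $X(0) = 0$ forces
$$X = \alpha(\zeta)\partial_\zeta + \sum_\ell\Bigl(\sum_j z_j b_{j\ell}(\zeta)\Bigr)\partial_{z_\ell} + \Bigl(w\, d(\zeta) + \sum_{j,\ell} z_j z_\ell c_{j\ell}(\zeta)\Bigr)\partial_w,$$
with $\alpha(0)=0$ and coefficients holomorphic in $\zeta$; imposing $[U_0, X] = kX$ further reduces each of $\alpha, b_{j\ell}, c_{j\ell}, d$ to a single monomial in $\zeta$ (with its exponent determined by the corresponding weight equation and required to be non-negative), so that $\mathfrak{I}_k$ is finite-dimensional. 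The tangency condition $(X + \bar X)(u - \Phi)|_{M_0} = 0$, after substituting $u = \Phi$, becomes a polynomial identity in $(x_1, \ldots, x_{n-1}, s, t)$ with $\zeta = s+it$; matching monomial coefficients yields a homogeneous linear system on the free parameters of $X$.

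The main technical work — and the principal obstacle — is resolving this linear system at every bi-weight $(k, 0)$ and verifying that its solution space coincides with $\mathbb{C}\otimes\mathfrak{g}_{k,0}$. For $k$ outside the set $\{2-n, -1, 0\}$, where $\mathfrak{g}_{k, 0}$ vanishes, the $U_0$-homogeneity constraints leave only a short list of candidates, and a direct substitution into the tangency identity eliminates them. For the three exceptional values the solution space must match the known spans of $V_{2-n}$, $\{X'_{-1}, Y'_{-1}\}$, and $\{U_0, V_0\}$ respectively; the most efficient way to carry out this matching is to pass to the bracket action $\mathrm{ad}_X|_{\mathfrak{m}_-}$, which for $X \in \mathfrak{I}_k$ is a grading-preserving derivation of the Heisenberg algebra $\mathfrak{m}_-$ of bi-weight $(k, 0)$, and then invoke parts 2--3 of Lemma \ref{model Lie group structure} to identify the complete list of such derivations with those already implemented by elements of $\mathfrak{g}_{k, 0}$. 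Since $\mathfrak{I}$ is by construction a direct complement to $\mathfrak{g}$ inside $\hol(M_0, 0)$, each $\mathfrak{I}_k$ must then be zero, establishing the theorem.
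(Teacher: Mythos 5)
Your reduction via Corollary \ref{model symmetries lemma corollary} and the idea of refining the $V_0$-grading by the $\mathrm{ad}_{U_0}$-eigenspace decomposition are sound (and consonant with the bi-grading \eqref{bi-graded} that the paper itself emphasizes), but there is a genuine gap: the step that carries the entire content of the theorem is not performed. You yourself identify ``resolving this linear system at every bi-weight $(k,0)$'' as the main technical work and principal obstacle, and then dispose of it with ``a direct substitution into the tangency identity eliminates them.'' Note that for every $k\geq 0$ the list of candidate monomial coefficients has size of order $n^2$ -- there is no ``short list'' for large $k$, and no uniform-in-$k$ elimination is offered. Worse, the one concrete mechanism you do propose for the exceptional weights $k\in\{2-n,-1,0\}$ fails: the space of bi-grading-preserving derivations of $\mathfrak{m}_-$ of bi-weight $(k,0)$ is strictly larger than $\mathrm{ad}(\g_{k,0})|_{\mathfrak{m}_-}$. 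For instance, at bi-weight $(0,0)$ the map $D$ with $De_1=e_1$, $De_{2n-2}=-e_{2n-2}$, $De_j=0$ otherwise and $De_0=0$ is a derivation of $\mathfrak{m}_-$ preserving both gradings (check the Leibniz rule on $[e_1,e_{2n-2}]=e_0$), yet it lies outside $\mathrm{span}\{\mathrm{ad}_{U_0}|_{\mathfrak{m}_-},\mathrm{ad}_{V_0}|_{\mathfrak{m}_-}\}$; so parts 2--3 of Lemma \ref{model Lie group structure} cannot ``identify the complete list of such derivations with those already implemented by $\g_{k,0}$.'' Ruling out the extra derivations forces you back to the tangency condition, i.e., exactly the computation you deferred. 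A further omission: even granting $\mathrm{ad}_X|_{\mathfrak{m}_-}=\mathrm{ad}_{X_0}|_{\mathfrak{m}_-}$ for some $X_0\in\g_{k,0}$, you still need the transitivity of $\Re\mathfrak{m}$ on $M_0$ together with $X(0)=X_0(0)$ to conclude $X=X_0$.

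For comparison, the paper closes precisely this gap by a different device: it writes the general $V_0$-weight-zero field \eqref{Catlin multitype weight zero field} with $\zeta$-dependent coefficients and exploits that, by Lemma \ref{grading lemma for model symmetry algebra}, the brackets $[Y^\prime_{-1},X]$, $[Y_{-j},X]$, $[X_{-j},X]$ must lie in $\mathfrak{m}_{-1}$ and $[X_{-k},[X_{-j},X]]$ in $\mathrm{span}\{i\dw\}$. These conditions first bound the $\zeta$-degrees of all coefficients and then pin them down completely, yielding $X\in\mathrm{span}\{U_0,V_0,V_{2-n}\}$ after the normalizations. If you wish to retain your bi-graded framework, you must carry out an analogous explicit elimination within each bi-weight; the framework by itself does not supply it.
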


\begin{proof}
Given Corollary \ref{model symmetries lemma corollary}, it will suffice to describe all holomorphic vector fields $X$ on $\mathbb{C}^{n+1}$ that vanish at the origin, commute with $V_{0}$, and generate symmetries of $M_0$, so proceeding let $X$ be such a field.

Since $X$ commutes with $V_{0}$, it has the form
\begin{align}\label{Catlin multitype weight zero field}
 X=\left(a_{0,0}w+\sum_{j,k=1}^{n-1}a_{j,k}z_jz_k\right)\dw+\sum_{k=1}^{n-1}\left(\sum_{j=1}^{n-1}b_{j,k}z_j\right)\dz{k}+a\dzz,
\end{align}
where $a_{j,k}$, $b_{j,k}$, and $a$ are holomorphic functions of $\zeta$, and for simplicity we assume
\begin{align}\label{symmetry assumption}
a_{j,k}=a_{k,j}.
\end{align}
Indeed, \eqref{Catlin multitype weight zero field} clearly gives the general formula for vector fields that are weight zero with respect to \eqref{grading0}, which is an alternate characterization of the fields that commute with $V_0$. 



We start by considering the Lie bracket $$[Y_{-n},X]=ia_{0,0}\dw\in\mathfrak{m}_{-2}=\mbox{span}\{Y_{-n}\}$$
(we make use of Lemma \ref{grading lemma for model symmetry algebra}), so that $a_{0,0}$ is a real constant. This allows us to assume $a_{0,0}=0$, which can be done without losing generality by replacing $X$ with $X-\tfrac{a_{0,0}}{2}V_0$.

By Lemma \ref{grading lemma for model symmetry algebra}, $[Y_{-j},X]\in\mathfrak{m}_{-1}=\mbox{span}\{X_{-k},Y_{-k}\,|\,k=1,\ldots,n-1\}$ for all $j=1,\ldots,n-1$. 
Since
\begin{align}\label{YjX bracket}
[Y_{-j},X]=i\sum_{k=1}^{n-1}2a_{j,k}z_k\dw+i\sum_{k=1}^{n-1}b_{j,k}\dz{k},
\end{align}
it follows that $ia_{j,k}$ are all real constants and
\begin{align}\label{bjk coef equiv}
b_{j,1}=r_{j,1}+ a_{j,n-1},
\quad
b_{j,k}=r_{j,k}+ a_{j,n-k}- a_{j,n+1-k}\zeta, \quad\forall\, k=2,\ldots, n-1
\end{align}
for some real constants $r_{j,k}$.

We have
\begin{align}\label{XjX bracket a}
[X_{-j},X]&=\sum_{k=1}^{n-1}2(a_{j,k}-a_{j+1,k}\zeta)z_k\dw+\sum_{k=1}^{n-1}(b_{j,k}-b_{j+1,k}\zeta)\dz{k}\\
&\quad\quad+a\dz{j+1}-2\sum_{k=1}^{n-1}b_{k,n-j}z_k\dw\\
&=\left(\sum_{k=1}^{n-1}(b_{j,k}-b_{j+1,k}\zeta)\dz{k}\right)+a\dz{j+1}-2\sum_{k=1}^{n-1}r_{k,n-j}z_k\dw\quad\forall\, j<n-1,
\end{align}
 where the second equality holds by \eqref{symmetry assumption} and \eqref{bjk coef equiv}. Similarly,
\begin{align}\label{XjX bracket b}
[X_{1-n},X]
&=\sum_{k=1}^{n-1}b_{n-1,k}\dz{k}-2\sum_{k=1}^{n-1}r_{n-1,k}z_k\dw.
\end{align}
By Lemma \ref{grading lemma for model symmetry algebra}, $[X_{-j},X]\subset\mathfrak{m}_{-1}$, so, in particular, the coefficients in \eqref{XjX bracket a} should be at most linear in $\zeta$. Accordingly, from the $\dz{k}$ terms in \eqref{XjX bracket a} we get that $b_{j,k}$ is constant for all $j>1$ and that $a$ is linear in $\zeta$ (in fact $a(\zeta)=a'\zeta$ with $a'$ constant since $X(0)=0$). All this together with \eqref{bjk coef equiv} implies that all $b_{j,k}$ are constant and
\begin{align}\label{a coef are zero}
a_{j,k}=0 \quad\quad\forall\, 2\leq j,k\leq n-1\mbox{ with }j\neq n+1-k.
\end{align}
Examining \eqref{YjX bracket} again, since $[Y_{-j},X]\subset\mathfrak{m}_{-1}$, constancy of the $b_{j,k}$ implies 
\[
[Y_{-j},X]\subset\mbox{span}\{X_{1-n},Y_{-1},\ldots, Y_{1-n}\},
\]
which in turn implies $b_{j,n-1}$ is real for all $j>1$. Consequently, $a_{j,k}=0$ whenever $(j,k)\neq(1,1)$. Summarizing these conclusions, \eqref{Catlin multitype weight zero field} simplifies to
\begin{align}\label{Catlin multitype weight zero field a}
 X=\left(a_{1,1}z_1^2\right)\dw+a_{1,1}z_{1}\dz{n-1}+a\dzz+\sum_{j,k=1}^{n-1}r_{j,k}z_j\dz{k}.
\end{align}
where $a_{1,1}$ is purely imaginary. 

For simplicity, let us further replace $X$ with $X+ia_{1,1}V_{2-n}$, which allows us to assume $a_{1,1}=0$, that is, we will proceed assuming
\begin{align}\label{Catlin multitype weight zero field b}
 X=a'\zeta\dzz+\sum_{j,k=1}^{n-1}r_{j,k}z_j\dz{k},
\end{align}
for some real numbers $r_{j,k}$ and constant $a^\prime$. In addition, computing $$[Y_{-1}',X]=ia'\dzz\in\mathfrak m_{-1},$$ we see that $a'$ is real.

Applying these simplifications to \eqref{XjX bracket a} and noting $[X_{-j},X]\subset\mathfrak{m}_{-1}$, we get
\begin{align}\label{r coef constraint b}
r_{j,k}=r_{j+1,k+1}-\delta_{j,k}a^\prime\quad\quad\forall\, j<k<n-1,
\end{align}
where $\delta_{j,k}$ is the Kronecker delta. 

Evaluating $[X_{-k},[X_{-j},X]]$ with the simplifications of \eqref{Catlin multitype weight zero field b} yields
\[
[X_{-k},[X_{-j},X]]=-2\left(r_{k,n-j}-r_{k+1,n-j}\zeta+r_{j,n-k}-r_{j+1,n-k}\zeta\right)\dw 
\quad\quad\forall\, j,k<n-1,
\]
\[
[X_{1-n},[X_{-j},X]]=-2\left(r_{n-1,n-j}+r_{j,1}-r_{j+1,1}\zeta\right)\dw 
\quad\quad\forall\, j<n-1,
\]
and
\[
[X_{1-n},[X_{1-n},X]]=-2r_{n-1,1}\dw,
\]
and, since $[X_{-k},[X_{-j},X]]\subset\mbox{span}\{i\dw\}$ by Lemma \ref{grading lemma for model symmetry algebra}, these last three identities imply
\begin{align}\label{r coef constraint a}
r_{j,n-k}=-r_{k,n-j}\quad\quad\forall\, j,k.
\end{align}
Combining \eqref{r coef constraint b} and \eqref{r coef constraint a} we get
\[
r_{j,k}=
\begin{cases}
0 &\mbox{ if }j\neq k\\
\frac{2j-n}{2}a^\prime  &\mbox{ if }j= k
\end{cases}
\]
for all $1\leq j,k\leq n-1$. Applying this to simplify \eqref{Catlin multitype weight zero field b} further we get
\[
X=a^\prime U_0-\frac{na^\prime}{2}V_0\in \mathfrak{g},
\]
which completes the proof as we have shown an arbitrary field in $\hol(M_0,0)$ vanishing at $0$ belongs to $\mathfrak{g}$.
\end{proof}

\begin{remark}\label{modelalgebra remark}
Theorem \ref{modelalgebra} applies to models in $\mathbb{C}^N$ with $N>5$, excluding the $\mathbb{C}^4$ and $\mathbb{C}^5$ examples. These two lower-dimensional cases are exceptional as their symmetry algebras have more symmetries than just
\begin{align}\label{sym alg remark}
\mathfrak{g}=\mbox{span}\{Y_{-n},X_{-j},Y_{-j},X^\prime_{-1},Y^\prime_{-1},U_0,V_0,V_{2-n}\,|\,j=1,\ldots,n-1\}.
\end{align}
The exceptional behavior should be expected because the $\mathbb{C}^4$ and $\mathbb{C}^5$ structures considered in this paper have regular CR symbols, distinguishing them from all of the considered higher dimensional structures, and differences between the symmetry algebras of regular and nonregular symbol structures are well known \cite{porter2021absolute,sykes2023geometry}.

For the model $M_0$ in $\mathbb{C}^4$, its $16$-dimensional symmetry algebra is fully described in \cite[Section 6]{belweighted}. For the model $M_0$ in $\mathbb{C}^5$, its 14-dimensional symmetry algebra is spanned by \eqref{sym alg remark} and the fields
\begin{align}\label{c5 exceptional symmetry}
iz_1\dz{1}-iz_1\zeta\dz{2}+iz_3\dz{3}+i\zeta\dzz+2iz_1z_3\dw\quad\mbox{ and }\quad iz_1^2\dz{2}-2iz_1\dzz,
\end{align}
which can be verified by first checking that these formulas indeed define symmetries and then noting that $14$ is the upper bound for this structure's symmetry group dimension given by \cite[Theorem 6.2]{sykes2023geometry}; getting this latter bound requires computing an algebraic Tanaka prolongation, which incidentally is the same technique that we applied to derive the formula in \eqref{c5 exceptional symmetry}.
\end{remark}

\section{Classification of the infinitesimal automorphism algebras}\label{Classification of the infinitesimal automorphism algebras}

In this section, we provide a complete classification of the infinitesimal automorphism algebras $\hol(M,0)$ of all ({smooth}) hypersurfaces given by \eqref{main formula}, which through Lemma \ref{moving points to origin} (below) is reduced to the classification of such algebras at the origin for hypersurfaces given by \eqref{main formula} passing through the origin. These results include, in particular, the holomorphic classification of all the locally homogeneous hypersurfaces in \eqref{main formula}. We shall remind that, even in case $f$ is smooth, the Levi extension of CR functions from $M$ to its full neighborhood in $\CC{n+1}$ implies the analyticity of the infinitesimal automorphism algebra, i.e. every infinitesimal CR automorphism is the real part of an element of $\hol(M,0)$.

It turns out that in the non-homogeneous case the desired algebra $\hol(M,0)$ is a subalgebra in the model algebra $\g$, while in the homogeneous case one has to add to an (actually fixed) subalgebra in $\g$ a one-dimensional subalgebra. Our immediate goal is identifying such a one-dimensional subalgebra. As a bi-product, we shall obtain the holomorphic classification of the homogeneous hypersurfaces in \eqref{main formula}.  

Recall that $\hol(M,0)$ always contains the subalgebra $\mathfrak{f}\subset \mathfrak{g}$ spanned by 
\begin{equation}\label{falgebra}
Y_{-n}, \ldots,Y_{-1},Y_{-1}^{\prime},X_{1-n},\ldots,X_{-2},X^\prime_{-1},V_{2-n}
\end{equation}
(employing the notations of Section 6), and that the evaluation of $\mathfrak{f}$ at $0$ is a codimension $1$ subspace in $T_0M$. In view of that, the homogeneity of $M$ at $0$ amounts to the existence of a holomorphic vector field 
\[
X=\sum_{j=1}^{n-1}f_j(z,\z,w)\dz{j}+g(z,\z,w)\dzz+h(z,\z,w)\dw
\]
with $$\re f_1(0)\neq 0.$$ Subtracting further a linear combination of the vector fields in \eqref{falgebra}, we may assume 
\[
f_1(0)=1 
\quad\mbox{ and }\quad
f_2(0)=...=f_{n-1}(0)=g(0)=h(0)=0.
\]

Recall that, in view of the above Lemma \ref{normalizing quartic terms}, we may assume the vanishing of the derivatives of $f$ at $0$ up to order $3$, {meaning $f(0)=\cdots=f^{(3)}(0)=0$, and we denote this order of vanishing by $\mathrm{ord}_0\,f$}. We need next

 \begin{proposition}\label{nonhomog}
 If a hypersurface \eqref{main formula} is locally homogeneous at $0$, then either $f$ identically vanishes, or $\mathrm{ord}_0\,f=4$. 
 \end{proposition}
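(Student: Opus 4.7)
The plan is to argue by contradiction: assume $M$ is locally homogeneous at $0$ with $\mathrm{ord}_0 f = k \ge 5$, and deduce $f^{(k)}(0) = 0$. By the preceding discussion, homogeneity at $0$ yields an infinitesimal symmetry $X=\sum f_j\partial_{z_j}+g\partial_\zeta+h\partial_w$ that, after subtracting an $\mathfrak f$-element and rescaling, satisfies $f_1(0)=1$ and $f_2(0)=\cdots=f_{n-1}(0)=g(0)=h(0)=0$. I apply the weight grading \eqref{grading0}, in which $\Phi$ splits as $\Phi_0+f(x_1)$ with $\Phi_0$ of weight $2$ and $f(x_1)$ of weight $\ge k$. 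Decomposing $X=X^{(-1)}+Z$ with $X^{(-1)}$ of weight $-1$ and $Z$ of nonnegative weight, the leading-order tangency forces $X^{(-1)}$ to be a weight-$(-1)$ symmetry of $M_0$; by Lemma~\ref{grading lemma for model symmetry algebra} and Theorem~\ref{modelalgebra}, $\hol(M_0,0)_{-1}=\mathfrak m_{-1}=\mathrm{span}\{X_{-j},Y_{-j}\}$, and using the freedom to subtract elements of $\mathfrak f\cap\mathfrak m_{-1}$ together with the normalization, I obtain $X^{(-1)}=X_{-1}$.

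Next, a direct computation with the formulas $\partial_{x_1}\Phi=f'(x_1)+2\sum_{k=1}^{n-1}x_kx_n^{n-1-k}$ and $\partial_{x_2}\Phi=2\sum_{k=1}^{n-2}x_kx_n^{n-2-k}$ from Section~\ref{Proof of main theorem} yields $2\re(X_{-1}\rho)=2x_{n-1}-\partial_{x_1}\Phi+x_n\partial_{x_2}\Phi=-f'(x_1)$ on $M$, so the correction $Z$ must satisfy $2\re(Z\rho)\equiv f'(x_1)\pmod\rho$ on $M$. A weight-by-weight analysis proceeds as follows. At weight $2$, the equation forces $Z^{(0)}\in\hol(M_0,0)_0=\mathrm{span}\{U_0,V_0\}$; at each weight $\ell$ with $3\le\ell\le k-2$, it forces $Z^{(\ell-2)}\in\hol(M_0,0)_{\ell-2}$, which vanishes for $\ell-2\ge 1$ by Lemma~\ref{grading lemma for model symmetry algebra} (valid for $n>4$). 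The remaining nontrivial constraint sits at weight $k-1$:
\[
2\re(Z^{(k-3)}\rho^{(2)})\equiv kc_k\,x_1^{k-1}\pmod{\rho^{(2)}}\text{ on }M_0,\qquad c_k=\frac{f^{(k)}(0)}{k!}.
\]

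To derive the contradiction, I restrict this last obstruction to the totally real slice
\[
N:=\{z_j=0\ \forall j\ge 2,\;\zeta=0,\;w=iv\}\cap M_0,
\]
parametrized by $(x_1,y_1,v)\in\mathbb R^3$ (the condition $\re w=\Phi_0$ on $M_0$ is automatic because $\Phi_0=0$ there). For $n>4$, a direct computation gives $\partial_{x_j}\Phi_0|_N=0$ for all $j\ne n-1$, $\partial_{x_{n-1}}\Phi_0|_N=2x_1$, and $\partial_{x_n}\Phi_0|_N=0$, so only the $\partial_w$- and $\partial_{z_{n-1}}$-components of $Z^{(k-3)}$ contribute on $N$. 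Writing their restrictions as weight-homogeneous polynomials $\gamma_0(w,z_1)$ of weight $k-1$ and $\alpha_{n-1,0}(w,z_1)$ of weight $k-2$, the obstruction on $N$ reduces to
\[
\re(\gamma_0(iv,z_1))-2x_1\re(\alpha_{n-1,0}(iv,z_1))=kc_kx_1^{k-1}.
\]
Setting $v=0$ and expanding $z_1=x_1+iy_1$, matching the coefficients of $x_1^{k-1-2j}y_1^{2j}$ for $j=0,1,2$ (all admissible since $k\ge 5$) yields the linear system
\[
\binom{k-1}{2j}\re(a_{0,k-1})-2\binom{k-2}{2j}\re(b_{0,k-2})=\delta_{j,0}kc_k,
\]
where $a_{0,k-1},b_{0,k-2}$ denote the leading $z_1^{k-1}$- and $z_1^{k-2}$-coefficients of $\gamma_0,\alpha_{n-1,0}$. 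Since $\binom{k-1}{2j}/\binom{k-2}{2j}=(k-1)/(k-1-2j)$ takes distinct values at $j=1,2$ whenever $k\ge 5$, the two homogeneous equations are independent and force $\re(a_{0,k-1})=\re(b_{0,k-2})=0$; substituting into the $j=0$ equation yields $c_k=0$, contradicting $\mathrm{ord}_0 f=k$. The main technical obstacle is this final slice analysis: the slice $N$ is chosen precisely because $n>4$ ensures enough of the $\partial_{x_j}\Phi_0|_N$ vanish to leave only two holomorphic unknowns, which then admit overdetermined matching conditions exactly in the regime $k\ge 5$.
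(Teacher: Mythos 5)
Your proof is correct and reaches, after all the machinery, exactly the same key identity as the paper: an equation of the form $\re(Az_1^{k-1})=Bx_1^{k-1}+x_1\re(Cz_1^{k-2})$ with $B$ a nonzero real multiple of $f^{(k)}(0)$, which is impossible. The paper gets there much more directly: it isolates, in the tangency identity \eqref{tang}, the monomials of degree $m-1$ depending on $x_1,y_1$ only, observes that for $n>3$ only three sources can produce such monomials (namely $f'(x_1)\cdot\re f_1(0)$, the term $P_{x_{n-1}}\re f_{n-1}=2x_1\re f_{n-1}$, and $\re h$), and then applies the univariate Laplacian twice; your coefficient matching on $x_1^{k-1-2j}y_1^{2j}$ for $j=0,1,2$ is an equivalent linear-algebra rendering of that double-Laplacian step. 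What you do differently is to route the reduction through the weight grading \eqref{grading0} and the rigidity of $\hol(M_0,0)$ (Lemma \ref{grading lemma for model symmetry algebra}, Theorem \ref{modelalgebra}); this is sound, and it mirrors the homological argument the paper uses \emph{after} this proposition to establish \eqref{expandX}, but for the proposition itself it is heavier than needed. In particular, the step killing the intermediate components $Z^{(1)},\dots,Z^{(k-4)}$ is superfluous: since $\rho^{(b)}=0$ for $3\le b\le k-1$, the weight-$(k-1)$ identity involves only $Z^{(k-3)}$ and $X_{-1}$ regardless of whether those intermediate components vanish, so you could drop the appeal to vanishing of positive-weight symmetries (and with it the only place where $n>4$ enters -- the paper's direct argument works for all $n>3$). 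Two minor imprecisions, neither fatal: $\hol(M_0,0)_0$ is spanned by $U_0,V_0,V_{2-n},X'_{-1},Y'_{-1}$, not just $U_0,V_0$ (the extra elements lie in $\mathfrak f$ and can be absorbed, but as written the claim is off); and, like the paper, your contradiction assumes $f$ has finite order $k\ge 5$, so the statement is really about $f$ not vanishing to order $\ge 5$.
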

 \begin{proof}
 For the ``extra'' infinitesimal automorphism $X$ as above with $f_1(0)\neq 0,$  we have the tangency condition
 \begin{equation}\label{tang}
 \re h=\left.f_{x_1}\cdot\re f_1+\sum\nolimits_{j=1}^{n-1} P_{x_j}\cdot\re f_j+P_{s}\cdot\re g\right|_{w=f+P+iv},    
 \end{equation}
 where $f+P$ is the right hand side of \eqref{main formula}. 
 
 Now assume, to produce a contradiction, that the order is $m>4$. 
 We now  collect within \eqref{tang} all monomials in $x_1,y_1$ only of degree $m-1$. Then we see that such terms come from three sources: the differentiated $x_1^m$ term in $f$ combined with $f_1(0)$, the polynomial $2x_1$ multiplied by the real part of the $z_1^{m-2}$ term within $f_{n-1}$, and finally the real part  of the $z_1^{m-1}$ term within $h$. This means that for some complex numbers $A,C$ and a real number $B\neq 0$ it holds that:
 $$\re(Az_1^{m-1})=Bx_1^{m-1}+x_1\re(Cz_1^{m-2}).$$
 Applying the univariate (in $z_1$) Laplacian to this equation twice yields the contradiction $x_1^{m-5}=0$.
 \end{proof}
 In view of Proposition \ref{nonhomog}, we may assume that {either $f=0$ or} $f^{(4)}(0)=1$ for a homogeneous hypersurface and further $f^{(5)}(0)=\sigma,$ where $\sigma$ is either $0$ or $1$ (which is accomplished by applying scalings $U_0$, $V_0$ in the notations of Section 6). 
 
 We shall now refer again to the tangency condition \eqref{tang}. Using the multitype weights (corresponding to the vector field $V_0$), we collect in \eqref{tang} terms of weights $k$ for $k=0,1,...$ (replacing for this purpose $f$ by its Taylor series at $0$). We recall the standard ``homological'' argument: the respective identities in each weight $k$ have the form 
 $$L(f_1^{k-1},...,f_{n-1}^{k-1},g^{k-2},h^k)=\cdots,$$
 where $L$ is the homological operator 
 $$L(f_{1},\ldots, f_{n-1},g,h):=\re h-\sum_{j=1}^{n-1}P_{x_{j}}\re f_j-P_s\re g\Bigr|_{w=P+iv},$$
 and dots stand for a linear expression in lower weight terms, if any (the upper indices here stand for the respective weighted homogeneous components). Then one successively determines each collection $(f_1^{k-1},...,f_{n-1}^{k-1},g^{k-2},h^k)$ by knowing the previous ones, and such a determination is unique modulo the {\em kernel} of the operator, while the latter precisely corresponds to the infinitesimal automorphism algebra $\g$ of the model.   The vanishing of $f(0)=f^\prime(0)=f^{\prime\prime}(0)=f^{(3)}(0)$ implies now that the  identities under discussion in the weights $0,1,2$ are {\em identical} to that in the case of the model $M_0$, which is why the collections $(f_1^{k-1},...,f_{n-1}^{k-1},g^{k-2},h^k)$ corresponding to $k=0,1,2$ coincide with that coming from the model algebra $\g$. Now, subtracting if necessary a constant multiple of the vector field $V_{2-n}$, we conclude that
 \begin{equation}\label{expandX}
X=X_{-1}+pU_0+qV_0+O(1),     
 \end{equation}
 where $X_{-1}$ comes from the model algebra $\g$ as in \eqref{symmetry formulas c}, $p,q$ are real numbers, and $O(1)$ stands for a vector field whose expansion at $0$ contains only terms of weights greater than or equal to $1$.
 
 We next deal with the stability subalgebra $\aut(M,0)$. 
 
\begin{proposition}\label{genalgebra}
 For a hypersurface \eqref{main formula} with $f\not\equiv 0$ and $n>4$, either of the following holds:

 \smallskip

 (I) The function $f(x_1)$ is a monomial $ax_1^m,\,m
 \geq 4$ and then $\aut(M,0)$ is spanned by $V_{2-n}$ and the vector field 
 \begin{equation}\label{mfield}
 U_0^m:=\sum_{j=1}^{n-1}\left(\frac{n-m}{n-2}+\frac{m-2}{n-2}j\right)z_j\dz{j}+\frac{m-2}{n-2}\z\dzz+mw\dw,    
 \end{equation}
 or
  \smallskip
  
(II)  the function $f(x_1)$ is not a monomial, and $\aut(M,0)$ is spanned by $V_{2-n}.$

 \end{proposition}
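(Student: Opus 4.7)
The plan is to exploit the multitype grading \eqref{grading0} associated with $V_0$ together with the graded structure of the model's symmetry algebra from Section \ref{Left-invariant structures on Lie groups}. Given $X \in \aut(M, 0)$, I would first expand $X$ in multitype-weight components and apply the homological reduction outlined in the paragraphs preceding the proposition. Since the normalization of Lemma \ref{normalizing quartic terms} ensures $\mathrm{ord}_0 f \geq 4$, the perturbation $f$ contributes to tangency only at multitype weights $\geq 4$, so the tangency constraints at weights $\leq 3$ coincide with those of the model. Combined with $X(0) = 0$, which excludes $X'_{-1}$ and $Y'_{-1}$ (the only weight-$0$ elements of $\mathfrak{g}$ not vanishing at the origin), this puts $X$ in the normal form
\[
X \;=\; pU_0 + qV_0 + rV_{2-n} + X_+
\]
for real scalars $p, q, r$ and $X_+$ of strictly positive multitype weight.

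Next I would substitute this normal form into the tangency relation \eqref{tang}. The Euler identities $\sum_{j} x_j P_{x_j} = 2P$ and $\sum_{j} j x_j P_{x_j} + x_n P_{x_n} = nP$ (expressing the weighted homogeneity of $P$ under $V_0$ and $U_0$ respectively) annihilate all $P$-dependent contributions, while the $V_{2-n}$-contribution cancels algebraically exactly as in the model case. What remains is the single polynomial identity in $x_1$,
\[
(pn + 2q)\,f(x_1) \;=\; (p+q)\,x_1\,f'(x_1).
\]
Writing $f = \sum_{m \geq 4} c_m x_1^m$ converts this into $\bigl((n-m)p + (2-m)q\bigr) c_m = 0$ for every $m$. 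If $f = a x_1^m$ is a monomial, this single constraint admits a one-parameter family of solutions $(p, q) \propto (m-2, n-m)$, giving, up to normalization, precisely the field $U_0^m$ of \eqref{mfield}. If instead $f$ involves two or more distinct powers $m_1 \neq m_2$, the $2 \times 2$ linear system has determinant $(m_1 - m_2)(n-2) \neq 0$, forcing $p = q = 0$.

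Finally I would show $X_+ = 0$. Both $V_{2-n}$ and (in the monomial case) $U_0^m$ are genuine holomorphic symmetries of $M$: the former because its tangency reduces to an algebraic identity independent of $f$, the latter by the weighted-dilation argument showing that $w$ and $f = ax_1^m$ rescale by the common factor $e^{mt}$ under the corresponding flow. Hence $X_+ = X - X^{(0)}$ lies in $\aut(M, 0)$ and has no weight-$0$ component. I would then induct on $k \geq 1$: supposing $X_+^{(j)} = 0$ for all $1 \leq j < k$, the tangency equation at weight $k + 2$ collapses to $L(X_+^{(k)}) = 0$, where $L$ is the model's homological operator, since the lower components of $X_+$ contribute nothing by hypothesis and the weight-matching constraint from the previous paragraph annihilates every $f$-induced source coming from $X^{(0)}$. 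By Lemma \ref{grading lemma for model symmetry algebra}, $L$ is injective at positive multitype weights for $n > 4$, so $X_+^{(k)} = 0$, completing the induction.

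The subtlest point is this final induction: one must verify that once $(p,q)$ satisfies the constraint of the second paragraph, no residual $f$-induced term survives at any higher weight to obstruct the identification $L(X_+^{(k)}) = 0$. This uniform cancellation is a direct consequence of the algebraic identity from the second paragraph being satisfied identically in $x_1$, and the hypothesis $n > 4$ is indispensable via Lemma \ref{grading lemma for model symmetry algebra}, which guarantees the needed injectivity of $L$; in the lower dimensions $n \in \{3,4\}$ the positive-weight part of $\hol(M_0, 0)$ is non-trivial and the present strategy breaks down.
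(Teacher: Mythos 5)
Your strategy is essentially the paper's: reduce $X\in\aut(M,0)$ to the normal form $pU_0+qV_0+rV_{2-n}+(\text{positive weight})$ via the homological/weighted argument, extract the scalar constraint on $(p,q)$ from the $f$-dependent part of the tangency condition, and kill the positive-weight tail using the triviality of the positively graded part of $\hol(M_0,0)$ for $n>4$. Your algebra is also correct: the condition $(np+2q)f=(p+q)x_1f'$ gives $\bigl((n-m)p+(2-m)q\bigr)c_m=0$ per monomial, the $2\times2$ determinant $(m_1-m_2)(n-2)$ forces $p=q=0$ when two powers occur, and $(p,q)\propto(m-2,n-m)$ recovers $(n-2)U_0^m=(m-2)U_0+(n-m)V_0$.

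The one place your write-up is not self-supporting is the logical ordering between your second and fourth paragraphs. In paragraph two you assert that after substituting the normal form into \eqref{tang} ``what remains is the single polynomial identity,'' but the tangency relation for $X$ also contains the contribution $\Re(X_+)(u-P-f)\bigr|_{M}$, and nothing said at that stage rules out that $X_+$ produces terms depending on $x_1$ alone (e.g.\ from a $z_1^k$ coefficient in $h$ or in $f_{n-1}$ paired with $P_{x_{n-1}}=2x_1+\cdots$) which could compensate a failure of the identity. Your final induction, in turn, takes the identity as input to conclude $X^{(0)}\in\aut(M,0)$ and hence $X_+\in\aut(M,0)$ --- so as written the two steps each presuppose the other. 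The fix is to run a single induction on the weight $\ell$: assuming $X^{(1)}=\cdots=X^{(\ell-3)}=0$, the weight-$\ell$ tangency identity reads $L\bigl(X^{(\ell-2)}\bigr)=\bigl((n-\ell)p+(2-\ell)q\bigr)c_\ell x_1^{\ell}$, and the observation that a nonzero multiple of $x_1^{\ell}$ ($\ell\geq4$) never lies in the image of $L$ --- proved by the same double-Laplacian trick used in Proposition \ref{nonhomog}, since the only candidate sources are $\Re(Az_1^{\ell})$ and $x_1\Re(Cz_1^{\ell-1})$, which are annihilated by $\Delta^2$ while $x_1^{\ell}$ is not --- forces both the coefficient constraint and, by injectivity of $L$ in positive weights, $X^{(\ell-2)}=0$. (The paper instead disposes of the tail first via the bound $\dim\mathcal V_1(M)\leq\dim\mathcal V_1(M_0)=0$ and then checks directly when the dilation $pU_0+qV_0$ preserves $M_f$; either order works once this image-of-$L$ step is made explicit.)
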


 \begin{proof}
 Taking a vector field $X\in\aut(M,0)$ and arguing as in the proof of the expansion \eqref{expandX}, we obtain an analogous expansion
 $$X=pU_0+qV_0+rV_{2-n}+Y,$$
 where $Y$ is a vector field whose expansion at $0$ contains only terms of weights $\geq m-3$ (recall that $X(0)=0$). However, observe that $Y$ actually vanishes identically. Indeed, it vanishes identically in the case of the model $M_0$, as follows from the description of $\g$, while the above homological argument implies the bound 
 \begin{equation}\label{bd}
 \mbox{dim}\,\mathcal V_k(M) \leq    \mbox{dim}\,\mathcal V_k(M_0)\quad \forall k\in\mathbb Z,  
 \end{equation}
 where $\mathcal V_k(M)$ denotes the linear subspace in $\hol (M,0)$ spanned by vector fields whose expansion at $0$ do not contain terms of weight less than $k$ (compare with Remark 12 in \cite{belweighted}). This implies $\mbox{dim}\,\mathcal V_{m-3}(M)=0$ and hence $Y\equiv 0$ (recall that $Y$ is a analytic at $0$, unlike $f$ which can be merely smooth).
 
We conclude  that $X-rV_{2-n}$ is the scaling field $p U_0 +q V_0$, so that $p U_0 +q V_0\in\aut(M,0)$. It is straightforward to verify that for $(p,q)\neq (0,0)$ $M$ is stable under dilations in $p U_0 +q V_0$ if and only if $f$ is a {\em monomial} and $p U_0 +q V_0$ is proportional to the above $U_0^m$ (the latter is true even in case $f$ is merely smooth). This finally implies the assertion of the proposition.
 \end{proof}

 Returning to the homogeneous case, the expansion \eqref{expandX} appears to be very useful for determining the structure of the additional vector field $X$ as above. Let us further expand
 $$X=X_{-1}+pU_0+qV_0+rV_{2-n}+X_1+X_2+
 \cdots,$$
 where $X_j$ is a vector field of weight $j$. 
 Now, since all the commutators of $X$ with $Y_j$ have to be spanned by vector fields from $\hol(M,0)$, we first consider within $[Y_n,X]$ the $(-1)$ weighted component. In view of the above, the latter has to be spanned by $\{X_{-j},Y_{-j}\,|\,1\leq j\leq n-1\}$.  On the other hand, this component equals $[Y_{-n},X_1]$. This implies  $$(F_1)_w=c_1, \, (F_2)_w=c_2+C_2\cdot\zeta,  ...(F_{n-1})_w=c_{n-1}+C_{n-1}\cdot\zeta,\,G_w=0, \,  H_w=A(z)$$
 for appropriate constants $c_j,C_j$ and a real linear function $A(z)$,  where $$X_1=\sum_{j=1}^{n-1} F_j(z,\z,w)\dz{j}+G(z,\z,w)\dzz+H(z,\z,w)\dw.$$
 We proceed with a similar consideration for $[Y_{-j},X],\,j=1,...,n-1$, which all have to be linear combinations of elements of the algebra $\hol(M,0)$. Picking the weight $0$ component of the bracket, we conclude that $[Y_{-j},X_1]$ is a linear combination of $U_0,V_0,V_{2-n},Y_{-1}',X_{-1}'$. This allows, in view of the already established conditions, the following freedom for the components of $X_1$:
 \[
 H=2tz_1^3,\,F_1=c_1w+iQ_1(z_1),\,F_j=(c_j+C_j\z) w+iQ_j(z_{j-1},z_j)\quad \forall\,j=2,...,n-2,
 \]
 $$F_{n-1}=(c_{n-1}+C_{n-1}\z)+3tz_1^2+iQ_{n-1}(z_{n-2},z_{n-1}),\,G=l_1(\z)z_1+...+l_{n-1}(\z)z_{n-1}$$
 for an appropriate constant $t\in\RR{}$, affine polynomials $l_j$, and real homogeneous quadratic polynomials $Q_j$ (the linear function $A(z)$ vanishes in view of its reality). 
 
 Considering now the Lie bracket of $X$ with $Y_{-1}'$, we pick the weight $1$ component within it. This component equals $[Y_{-1}',X_1]$ and is at the same time a constant multiple of $X_1$. We immediately conclude that this Lie bracket vanishes, all the $l_j$ are constant, and all the $C_j$ vanish as well.  Coming back to the Lie brackets $[Y_{-j},X_1]$, we consider the $\dzz$ component and see (from the constancy of $l_j$) that the expansion of none of these Lie brackets contains $U_0$. At the same time, for each $j=1,...,n-1$ we are able to find some $k$ such that $F_k$ is independent of $z_j$ (as follows from the above description). This implies that the expansion of none of the Lie brackets $[Y_{-j},X_1]$ contains $V_0$ (and for a similar reason none contain $X_{-1}'$). It follows from here that all the $Q_j$ vanish. Thus,
 $$X_1=c_1w\dz{1}+...+c_{n-2}w\dz{n-2}+(c_{n-1}w+3tz_1^2)\dz{n-1}+(l_1z_1+...+l_{n-1}z_{n-1})\dzz+2tz_1^3\dw.$$
 We now consider the commutator $[X_{1-n},X]$, pick the $0$ weighted component (which equals $[X_{1-n},X_1]$), and obtain
 $$2z_1c_1\dz{1}+2z_1c_2\dz{2}+...+2z_1c_{n-1}\dz{n-1}+l_{n-1}\dzz-2c_1w\dw.$$
 But  $[X_{1-n},X]$ has to be in the span of $U_0,V_0,V_{2-n},Y_{-1}',X_{-1}'$, as above, so $c_1,..,c_{n-1}$ must all vanish.
 
 We finally take the commutators $[X_{-j},X],\,j=2,...,n-2$,  again pick the $0$ weighted component (which equals $[X_{-j},X_1]$), and obtain:
 $$(l_1z_1+...+l_{n-1}z_{n-1})\dz{j+1}+(l_j-l_{j+1}\z)\dzz.$$
 But  each $[X_{-j},X]$ has to be in the span of $U_0,V_0,V_{2-n},Y_{-1}',X_{-1}'$, as above, and thus all the $l_j$ vanish.

This finally gives
 \begin{equation}\label{X1}
 X_1=3tz_1^2\dz{n-1}+2tz_1^3\dw,\quad t\in\RR{}.
 \end{equation}
Observe that the sum of all the negatively graded terms of the commutator $[Y_{-1},X]$ is spanned by the vector fields $Y_{-1}$ and $X_{1-n}$. This implies that the expansion of  $[Y_{-1},X]$ in the Lie algebra $\hol(M,0)$ does {\em not} contain $X$. 
 
 We claim now that one has $X_2=X_3=...=0$. For the component $X_2$, we consider the brackets $[Y_{-j},X]$ and their weighted component of the weight $1$. The latter coincide with respectively $[Y_{-j},X_2]$ and have to be a constant multiple of $X_1$. The latter is possible only if, up to a nonzero real multiple,
 \begin{equation}\label{X2}
 X_2=itz_1^3\dz{n-1}+\left(\frac{i}{2}tz_1^4+A(\z)w^2\right)\dw.
 \end{equation}
 The consideration of the bracket $[Y_{-n},X]$ easily gives $A=0$. And the above observation on the expansion of $[Y_{-1},X]$ gives lastly $X_2=0$. 
 
 We use the conclusion  $X_2=0$ to prove that all the components of $X_3$ are independent of $z_j$ (by commuting with the respective $Y_{-j}$), and then the commutator with $Y_{-n}$ to conclude that $X_3=0$. After that the conclusion $X_4=X_5=...=0$ follows by induction (after commuting $X$ with all the $Y_{-j}$).
 
Summarizing these observations, we have shown 
 
 \smallskip
 
 {\em \noindent for a homogeneous at $0$ hypersurface \eqref{main formula} the above extra vector field $X$ must have the form:
 \begin{equation}\label{extra}
X=X_{-1}+pU_0+qV_0+rV_{2-n}+X_1,     
 \end{equation}
 where $X_1$ is as in \eqref{X1}}.
 
 We consider now a hypersurface $M$ with 
 \begin{equation}\label{normalf}
 f(x_1)=x_1^4+\sigma x_1^5+\sum_{j=6}^\infty a_jx^j,\quad a_j\in\RR{},
 \end{equation}
 normalized as above, with its infinitesimal automorphism \eqref{extra}, and study its  tangency condition \eqref{tang}. By substracting $rV_{2-n}$ from $X$, we may always assume that $r=0$, for convenience. As discussed above, the tangency condition is satisfied in the weights $1,2$.  In weight $3$, the condition gives
  \begin{equation}\label{weight3}
2t\re(z_1^3)=4x_1^3+6tx_1\re(z_1^2),
 \end{equation}
 which is equivalent to requiring $t=-1$.
 
 Finally,
 for all the weights greater than $3$, considered {\em at once}, we get:
 \begin{equation}\label{weights4}
(np+2q)f(x_1) = \bigl(1+(p+q)x_1\bigr)f'(x_1)-4x_1^3.      
 \end{equation}
 Particularly in the weight $4$ we have the compatibility condition 
 $$np+2q=5\sigma+4(p+q).$$
 We now have to describe all solutions for the family of 1st order ODEs \eqref{weights4} satisfying \eqref{normalf} (and hence the compatibility conditions). The respective ODEs are all linear inhomogeneous and can be solved by elementary methods explicitely, that is why we omit the calculation of solutions and just present the final outcome. It will be convenient to not insist on the normalization \eqref{normalf} at this point and require merely $f(0)=f'(0)=0$ (recall, by Lemma \ref{normalizing quartic terms}, terms of degrees $2$ and $3$ in $x_1$ can be removed from $f$ without changing the form \eqref{main formula} and the higher order terms within $f$). Acting on the set of solutions as above by the transformation group generated by:
 \begin{equation}\label{Ggroup}
     f\mapsto f+ax_1^2+bx_1^3,\quad f\mapsto c_1f(c_2x_1),\quad a,b,c_1,c_2\in\RR{},\,c_1,c_2\neq 0,
     \end{equation}
 we obtain the following final classification (also summarized in the introduction's Theorem \ref{h structures theorem}).
 
 {\bf Type I}. Here $f(x)=(x+1)^a-ax-1 $ for some $a\in \mathbb{R},\,a\neq\{0,1,2,3\}$. In this case we compute the extra vector field $X$ as
\begin{align}
V_f&:=\left(aw+a(a-1)z_1+2z_{n-1}\right)\dw+\frac{\partial}{\partial z_{1}}-\z\frac{\partial}{\partial z_{2}}+\frac{a-2}{n-2}\z\dzz+\\
&\quad\quad+\sum_{j=1}^{n-1}\left(\frac{n-a}{n-2}+\frac{a-2}{n-2}j\right)z_j\frac{\partial}{\partial z_{j}}.
\end{align}

 {\bf Type II}. Here $f(x)=e^x-x-1$ and we compute the extra vector field $X$ as
\begin{align}
V_f&:=\left(w+z_1+2z_{n-1}\right)\dw+\frac{\partial}{\partial z_{1}}-\z\frac{\partial}{\partial z_{2}}+\frac{1}{n-2}\z\dzz+\sum_{j=1}^{n-1} \frac{j-1}{n-2}z_j\frac{\partial}{\partial z_{j}}.
\end{align}

 {\bf Type III}. Here $f(x)=\ln(x+1)-x$ and we compute the extra vector field $X$ as
\begin{align}
V_f&:=\left(2z_{n-1}-z_1\right)\dw+\frac{\partial}{\partial z_{1}}-\z\frac{\partial}{\partial z_{2}}-\frac{2}{n-2}\z\dzz+\sum_{j=1}^{n-1} \frac{n-2j}{n-2}z_j\frac{\partial}{\partial z_{j}}.
\end{align}

 {\bf Type IV}. Here $f(x)=(x+1)\ln(x+1)-x$ and we compute the extra vector field $X$ as
\begin{align}
V_f&:=\left(w+z_1+2z_{n-1}\right)\dw+\frac{\partial}{\partial z_{1}}-\z\frac{\partial}{\partial z_{2}}-\frac{1}{n-2}\z\dzz+\sum_{j=1}^{n-1} \frac{n-j-1}{n-2}z_j\frac{\partial}{\partial z_{j}}.
\end{align}

 {\bf Type V}. Here $f(x)=2(x+1)^2\ln(x+1)-(x+1)^2+1$ and we compute the extra vector field $X$ as
\begin{align}
V_f&:=\left(2w+4z_1+2z_{n-1}\right)\dw+\frac{\partial}{\partial z_{1}}-\z\frac{\partial}{\partial z_{2}}+(z_{n-1}-z_1)\frac{\partial}{\partial z_{n-1}}+\sum_{j=1}^{n-2} z_j\frac{\partial}{\partial z_{j}}.
\end{align}

 {\bf Type VI}. Here $f(x)=6(x+1)^3\ln(x+1)-2(x+1)^3+2$ and we compute the extra vector field $X$ as
\begin{align}
V_f&:=\left(3w+18z_1-3z_1^3 + 2z_{n-1}\right)\dw+\frac{\partial}{\partial z_{1}}-\z\frac{\partial}{\partial z_{2}}-\left(9z_1-\frac{9}{2}z_1^2\right)\frac{\partial}{\partial z_{n-1}}\\
&\quad\quad+\frac{1}{n-2}\z\dzz+\sum_{j=1}^{n-1} \frac{n-3+j}{n-2}z_j\frac{\partial}{\partial z_{j}}.
\end{align}
Note that, in turn, for the homogeneous case, excluding $f\equiv 0$ and $f\equiv x_1^4$, we always have $\sigma=1$ in \eqref{normalf}.

Having found $\mathfrak{aut}(M,0)$ for all $M$ given by \eqref{main formula}, descriptions of $\mathfrak{aut}(M,z^*)$ for arbitrary $z^*\in M$ are given by the following Lemma.
\begin{lemma}\label{moving points to origin}
    For a hypersurface $M_f$ of the form in \eqref{main formula} and a point $z^*=(z_0^*,\ldots,z_n^*)\in M_f\subset \mathbb{C}^{n+1}$, there exists a Lie algebra isomorphism identifying
    \[
\mathfrak{aut}(M_{f(x)},z^*)\cong\mathfrak{aut}(M_{f(x-x_1^*)-f(-x_1^*)},0),
    \]
where $x_1^*$ denotes $\Re(z_1^*)$.
\end{lemma}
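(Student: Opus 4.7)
The plan is to construct an explicit biholomorphism $\Psi\colon(\mathbb{C}^{n+1},z^*)\to(\mathbb{C}^{n+1},0)$ mapping $M_{f(x)}$ onto $M_{f(x-x_1^*)-f(-x_1^*)}$; the pushforward $\Psi_*$ will then restrict to the required Lie algebra isomorphism of stability subalgebras. Since \eqref{main formula} depends on $z$ only through the real parts $x_j=\Re z_j$, every imaginary translation $z_j\mapsto z_j+ic_j$ is an automorphism of $M_f$, and composing with these reduces the problem to the case $z^*\in\mathbb{R}^{n+1}$.

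The key step is to recognize that many of the infinitesimal symmetries of the model from Section \ref{Left-invariant structures on Lie groups} are in fact \emph{universal}, preserving every hypersurface of the form \eqref{main formula} rather than only $M_0$. By a direct computation checking that $X\phi$ is purely imaginary on all of $\mathbb{C}^{n+1}$ (for $\phi$ the real defining function of $M_f$), I would establish this universality for the fields $Y_{-n}$, $Y_{-j}$, $Y'_{-1}$, the $X_{-j}$ with $j=2,\ldots,n-2$, $X_{1-n}$, and $X'_{-1}$; the computation for $X_{-j}$ reduces via the identity
\[
P_{x_j}-z_nP_{x_{j+1}}=2x_{n-j}-2iy_n\sum_{m=1}^{n-j-1}x_mx_n^{n-j-1-m}
\]
to $X_{-j}\phi\in i\mathbb{R}$. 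The only negatively-graded field whose flow is \emph{not} universal is $X_{-1}$: its explicit time-$t$ flow $(z_0,z_1,z_2,\ldots,z_n)\mapsto(z_0+2z_{n-1}t,\,z_1+t,\,z_2-z_n t,\,z_3,\ldots,z_n)$ transforms $M_g$ into $M_{g(\cdot-t)}$ for every $g$, as one verifies by direct substitution into \eqref{main formula}.

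I would then compose the universal-symmetry flows in the order $X'_{-1},X_{1-n},X_{-(n-2)},\ldots,X_{-2}$ at appropriate real times, chosen inductively, to produce a biholomorphism preserving $M_f$ that sends $z^*$ to a point of the form $(f(x_1^*),x_1^*,0,\ldots,0)$; the $x_0$-coordinate is forced by membership in $M_f$ together with the vanishing of the polynomial part of \eqref{main formula} at such a point. The $X_{-1}$-flow at time $-x_1^*$ then moves the basepoint to $(f(x_1^*),0,\ldots,0)$ and transforms $M_f$ into $M_{f(\cdot+x_1^*)}$, and a concluding translation $z_0\mapsto z_0-f(x_1^*)$ arrives at the origin and yields $M_{f(\cdot+x_1^*)-f(x_1^*)}$, which matches the $\tilde f$ in the lemma up to the sign convention $x_1^*\leftrightarrow -x_1^*$ that reflects the chosen direction of the $X_{-1}$-flow.

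The main technical obstacle is verifying that the chained flows in the third paragraph really do zero out $x_n^*,x_{n-1}^*,\ldots,x_2^*$ in succession without disturbing coordinates already normalized in preceding steps. This works out neatly because, once the initial $X'_{-1}$-flow sets $z_n=0$, each subsequent $X_{-j}$-flow perturbs $z_{j+1}$ only by a term proportional to $z_n$ and hence leaves $z_{j+1}$ unchanged; the only residual effect of the chain on previously-zeroed coordinates is on $z_0$, which is then absorbed by the final translation.
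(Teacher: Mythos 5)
Your proposal is correct and follows essentially the same route as the paper: both realize the isomorphism as the pushforward of a composition of flows of the model symmetries $Y_{-j}$, $Y'_{-1}$, $X_{-2},\dots,X_{1-n}$, $X'_{-1}$ (which preserve every $M_f$) and of $X_{-1}$ (whose flow shifts the argument of $f$), followed by a translation in $z_0$; your reordering of the flows so that $\zeta$ is zeroed first, and your pointwise verification that $X\phi$ is purely imaginary, are just a cleaner packaging of the paper's computation with explicit flow formulas and the components of the Levi-kernel vector $v(z^*)$ as flow times. The sign discrepancy you flag is real but harmless: the construction as carried out produces $M_{f(x+x_1^*)-f(x_1^*)}$, and the paper's stated $f(x-x_1^*)-f(-x_1^*)$ corresponds to the opposite sign convention for the flow times.
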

\begin{proof}
We will consider how $M_f$ transforms under the symmetries of the indecomposable model $M_0$ (i.e., the hypersurface given by \eqref{main formula} with $f=0$), considering, in particular, flows of the infinitesimal symmetries $X_{-1},\ldots, X_{1-n}$, $Y_{-1},\ldots, Y_{-n}$, $X_{-1}^\prime$, and $Y_{-1}^\prime$ given by \eqref{symmetry formulas a},\eqref{symmetry formulas b}, and \eqref{symmetry formulas c}. 

Fixing some notation, using coordinates $z=(z_0,\ldots, z_n)$ of $\mathbb{C}^{n+1}$, let $\pi_j(z):=z_j$ denote the coordinate projections, and let $\mathrm{Fl}^X_t:\mathbb{C}^{n+1}\to \mathbb{C}^{n+1}$  denote the flow of the real part of a holomorphic vector field $X\in \mathfrak{hol}(\mathbb{C}^{n+1})$, that is
\begin{align}\label{flow formula}
\left.\tfrac{d}{dt}\mathrm{Fl}^{X}_{t}(z)\right|_{t=0}=\Re(X)(z)\quad\quad\forall\, z\in \mathbb{C}^{n+1}.
\end{align}
For all $t\in\mathbb{R}$ and $j\in\{1,\ldots, n-2\}$, the index $r$ component of $\mathrm{Fl}^{X_{-j}}_{t}$ is given by
\[
\pi_r\circ\mathrm{Fl}^{X_{-j}}_{t}(z)=z_r+\delta_{r,j}t-\delta_{r,j+1}tz_n+\delta_{r,0}(2tz_{n-j}+\alpha_{n,j,t}),
\]
where $\delta_{r,j}$ is the Kronecker delta and 
\[
\alpha_{n,j,t}:=
\begin{cases}
    t^2&\mbox{ if }n=2j\\
    -t^2z_n&\mbox{ if }n=2j+1\\
    0 & \mbox{ otherwise.}
\end{cases}
\]
Additionally we have
\[
\pi_r\circ\mathrm{Fl}^{Y_{-1}^\prime}_{t}(z)=z_r+\delta_{r,n}it,
\quad
\pi_r\circ\mathrm{Fl}^{Y_{-j}}_{t}(z)=z_r+\delta_{r,j}it
\quad\quad\forall\, j\in\{1,\ldots, n-1\},
\]
\[
\pi_r\circ\mathrm{Fl}^{Y_{-n}}_{t}(z)=z_r+\delta_{r,0}it,
\quad
\pi_r\circ\mathrm{Fl}^{X_{1-n}}_{t}(z)=z_{r}+\delta_{r,n-1}t-\delta_{r,0}2tz_{1},
\]
and
\[
\pi_r\circ\mathrm{Fl}^{X_{-1}^\prime}_{t}(z)=z_{r}+\delta_{r,n}2t+\beta_{n,r}(t),
\]
where $\beta_{n,0}(t),\ldots,\beta_{n,n-1}(t)$ are functionals on $\mathbb{C}^{n+1}$ parameterized by $t$ defined recursively as $\beta_{n,0}(t)=\beta_{n,1}(t)=\beta_{n,n}(t)=0$, $\beta_{n,2}(t)=(2-n)z_1t$ and
\[
\frac{\partial}{\partial t}\beta_{n,j}=(2j-2-n)(z_{j-1}+\beta_{n,j-1})
\quad\mbox{ and }\quad
\beta_{n,j}(0)=0\quad\quad\forall j\in\{3,\ldots, n-1\}.
\]
It is easily checked that these formulas all satisfy \eqref{flow formula}.

For any $j\in\{1,\ldots,n-1\} $ and $v\in \mathbb{C}^N$ with $\pi_r(v)=0$ for all $r=1,\ldots, j-1$, we have
\[
\pi_r\circ\mathrm{Fl}_{-y_1}^{Y_{-j}}\circ\mathrm{Fl}_{-x_1}^{X_{-j}}(v)=0
\quad\quad\forall\, r=1,\ldots, j,
\]
whereas if $\pi_r(v)=0$ for all $r=1,\ldots, n-1$ then
\[
\pi_r\circ\mathrm{Fl}_{-y_n}^{Y_{-1}^\prime}\circ\mathrm{Fl}_{-\frac{x_n}{2}}^{X_{-1}^\prime}(v)=0
\quad\quad\forall\, r=1,\ldots, n.
\]

Hence we can apply such transformations in sequence to the point $z^*\in \mathbb{C}^N$ to transform it to a point of the form $(w,0,\ldots,0)$. Specifically, there exist vectors $v,u\in\mathbb{C}^n$ such that 
\[
\Psi:=\mathrm{Fl}_{u_n}^{Y_{-1}^\prime}\circ\mathrm{Fl}_{v_n}^{X_{-1}^{\prime}}\circ\mathrm{Fl}_{-u_{n-1}}^{Y_{1-n}}\circ\mathrm{Fl}_{-v_{n-1}}^{X_{1-n}}\circ\cdots\circ\mathrm{Fl}_{u_2}^{Y_{-2}}\circ\mathrm{Fl}_{v_2}^{X_{-2}}\circ\mathrm{Fl}_{u_1}^{Y_{-1}}\circ\mathrm{Fl}_{v_1}^{X_{-1}}
\]
satisfies $\pi_1\circ\Psi(z)=z_1-z_1^*$, and $\Psi(z^*)=(\tilde w,0,\ldots, 0)$ for some number $\tilde w$.  In particular, using the property $F\circ \Psi^{-1}=F$ for 
\[
F(z)=x_0-\sum_{0<j<j+k\leq n}x_jx_kx_n^{n-j-k},
\]
which holds because $\Psi$ is constructed from flows of vector fields annihilating $F$, we have $\Psi(z^*)=(f(x_1^*),0,\ldots, 0)$. Thus,
\begin{align}\label{isotropy isom}
\Psi(M_{f(x)})=M_{{f(x+x_1^*)}}
\quad\mbox{ and }\quad
\Psi(z^*)=(f(x_1^*),0,\ldots,0).
\end{align}
Hence, letting $\Phi$ denote the translation $\Phi(z)=(z_0-f(x_1^*),z_1,\ldots, z_n)$, the induced map $(\Phi\circ\Psi)_*:\mathfrak{aut}(M_{f(x)},z^*)\to\mathfrak{aut}(M_{f(x+x_1^*)-f(x_1^*)},0)$ is an isomorphism.
\end{proof}

We summarize the outcomes of this section in the following classification theorem.

\begin{theorem}\label{thealgebras}
For a {smooth} hypersurface $M$ as in \eqref{main formula} and a point $z^*\in M$, there is a holomorphic change of coordinates $\Psi$ (given in the proofs of Lemmas \ref{normalizing quartic terms} and \ref{moving points to origin}) such that $\Psi(z^*)=0$ and $\Psi$ transforms $M$ into a {smooth} hypersurface as in \eqref{main formula} normalized to satisfy $f(0)=f^\prime(0)=f^{\prime\prime}(0)=f^{(3)}(0)=0$. After such normalization, its infinitesimal automorphism algebra $\hol(M,0)$ is as follows.

\smallskip 

\noindent (i) If $f\equiv 0$, then $\hol(M,0)$ is the model algebra $\g$, as in Theorem \ref{modelalgebra}.

\smallskip 

\noindent (ii) If $f$ is a monomial $ax_1^4$ with $a\neq 0,$ then $M$ is locally homogeneous at $0$, the algebra $\hol(M,0)$ is spanned by the algebra $\mathfrak f$, as in \eqref{falgebra}, the vector field $U_0^4$, as in Proposition \ref{genalgebra}, and the vector field
\[
\widehat X_{-1}:=
\left(2z_{n-1}-2az_1^3\right)\frac{\partial}{\partial z_{0}}+\frac{\partial}{\partial z_{1}}-z_n\frac{\partial}{\partial z_{2}}-3az_1^2\frac{\partial}{\partial z_{n-1}}.
\]

\smallskip 

\noindent (iii) If $f$ is a monomial $ax_1^m$ with $m\geq 5$ and $a\neq 0$, then $\hol(M,0)$ is spanned by the algebra $\mathfrak f$, as in \eqref{falgebra}, and the vector field $U_0^m$, as in Proposition \ref{genalgebra}.

\smallskip 

\noindent (iv) If $M$ is locally homogeneous at $0$, and the respective $f$ is not identically $0$ and not a monomial $ax_1^4$, then $f$ is under the group action of \eqref{Ggroup}, equivalent to one of the functions I - VI above (with $a\neq 4$), while $M$ is  $\phi^{-1}$-biholomorphic to the respective $M_f$, where $\phi$ is a composition of the flows of $U_0,V_0$ and the map \eqref{remove}. The algebra $\hol(M,0)$ is spanned by the algebra $\mathfrak f$, as in \eqref{falgebra}, and the $\phi$-pushforward of the respective vector field $V_f$ (which necessarilly has the form \eqref{extra}). 

\smallskip 

\noindent (v) If $M$ is not as in (i)-(iv), then $\hol(M,0)=\mathfrak f$.

\smallskip

In particular, any locally homogeneous at $0$ hypersurface \eqref{main formula} satisfies either (i), (ii), or (iv), and all the respective homogeneous models are biholomorphically inequivalent (including type I hypersurfaces with different $a$).

\end{theorem}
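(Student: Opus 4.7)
The plan is to assemble the theorem from results already established: the two reduction Lemmas \ref{normalizing quartic terms} and \ref{moving points to origin}, the model computation Theorem \ref{modelalgebra}, the stability classification Proposition \ref{genalgebra}, Proposition \ref{nonhomog}, and the ODE analysis leading to \eqref{weights4} together with the explicit Types I--VI. First, by Lemmas \ref{moving points to origin} and \ref{normalizing quartic terms}, one reduces without loss of generality to a hypersurface $M=M_f$ of the form \eqref{main formula} with $z^*=0$ and $f$ vanishing through order three at $0$. A direct verification using the tangency condition \eqref{tang} shows that every vector field listed in \eqref{falgebra} is tangent to every such $M_f$ irrespective of $f$, so $\mathfrak f\subset\hol(M,0)$ unconditionally. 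Because $\mathfrak f(0)$ has codimension one in $T_0M$, the algebra $\hol(M,0)$ decomposes as $\mathfrak f$ together with $\aut(M,0)$ and at most one further ``extra'' vector field $X_{-1}$ witnessing local homogeneity at $0$.

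The stability subalgebra $\aut(M,0)$ is supplied directly by Proposition \ref{genalgebra}: it is $\mathrm{span}\{V_{2-n},U_0^m\}$ when $f=ax_1^m$ with $a\neq 0$, and $\mathrm{span}\{V_{2-n}\}$ otherwise. For the extra vector field, case (i) is immediate from Theorem \ref{modelalgebra}. In the remaining cases, Proposition \ref{nonhomog} forces $\mathrm{ord}_0 f=4$ at a homogeneous point, and the weighted homological analysis carried out earlier in this section shows any such $X_{-1}$ must take the form \eqref{extra} with $t=-1$ in \eqref{X1}; the tangency condition then collapses to the ODE \eqref{weights4} subject to the weight-$4$ compatibility $np+2q=5\sigma+4(p+q)$. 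Substituting $f=ax_1^4$ confirms by direct calculation that $\widehat X_{-1}$ of case (ii) solves this system; substituting $f=ax_1^m$ with $m\geq 5$ admits no nontrivial solution, producing case (iii); solving \eqref{weights4} in general modulo the group action \eqref{Ggroup} yields exactly the six normal forms Types I--VI with the explicit extras $V_f$, giving case (iv). Case (v) is then the residue: when $f$ is neither a monomial nor $G$-equivalent to any Type I--VI, no extra vector field exists, and the only non-stability contribution is $\mathrm{span}\{V_{2-n}\}\subset\mathfrak f$.

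The main remaining obstacle is the concluding inequivalence assertion. I would resolve it using Corollary \ref{Holomorphic classification corollary}, which reduces the question to comparing fourth derivatives: two normalized $M_f$ and $M_{f^*}$ are biholomorphic at some pair of points (equivalently, at the origin by homogeneity) if and only if $f^{(4)}(x)=c_1(f^*)^{(4)}(c_2 x+c)$ for real constants. The fourth derivatives of the six types are $a(a-1)(a-2)(a-3)(x+1)^{a-4}$, $e^x$, $-6(x+1)^{-4}$, $2(x+1)^{-3}$, $-4(x+1)^{-2}$, and $36(x+1)^{-1}$ respectively. For Type I, the prefactor vanishes at $a\in\{0,1,2,3\}$, collapsing into case (i), and $a=4$ reduces to the monomial case (ii); for other $a$, the exponent $a-4$ is an invariant of the affine-rescaling group $h(x)\mapsto c_1 h(c_2 x+c)$, separating distinct values of $a$. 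The remaining types are distinguished by the analytic type of their fourth derivatives (exponential for Type II, and power functions with distinct negative-integer exponents $-4,-3,-2,-1$ for Types III, IV, V, VI respectively), each of which is an orbit invariant under affine rescalings.
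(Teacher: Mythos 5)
Your proposal is correct and follows essentially the same route as the paper: Theorem \ref{thealgebras} is assembled exactly as you describe, by combining Lemmas \ref{normalizing quartic terms} and \ref{moving points to origin}, Theorem \ref{modelalgebra}, Propositions \ref{nonhomog} and \ref{genalgebra}, and the weighted analysis culminating in \eqref{extra} and the ODE \eqref{weights4}, with the final inequivalence assertion deferred to Theorem \ref{Holomorphic classification theorem}. Your explicit fourth-derivative computation distinguishing Types I--VI (and separating Type I across different values of $a$) is a correct and somewhat more detailed spelling-out of that last step than the paper gives.
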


The last assertion (on the holomorphic inequivalence of the models I-VI) follows from Theorem \ref{Holomorphic classification theorem}, proved in the next section.

\section{Holomorphic classification}\label{Holomorphic classification}
In this section we prove Theorem \ref{Holomorphic classification theorem} and its Corollary \ref{Holomorphic classification corollary}, which solves the local equivalence problem for {smooth} hypersurfaces of Theorem \ref{main theorem} by deriving a criteria for when one such hypersurface can be obtained as the image of a biholomorphism restricted to another. {Notice that Corollary \ref{Holomorphic classification corollary} indeed follows immediately from Theorem \ref{Holomorphic classification theorem} given properties of the map $\Phi\circ\Psi$ constructed }in the proof of Lemma \ref{moving points to origin}, namely for a point $z^*\in \mathbb{C}^{n+1}$ this bi-holomorphic map is constructed such that
\begin{align}\label{isotropy isom a}
\Phi\circ\Psi(M_{f(x)})=M_{{f(x+x_1^*)}-f(x_1^*)},
\quad
\pi_1\circ\Phi\circ\Psi(z)=z_1-z_1^*,
\quad\mbox{ and }\quad
\Phi\circ\Psi(z^*)=0.
\end{align}

For this section only we switch notation labeling $z=(z_1,\ldots, z_{n-1})$, so that coordinates in $\mathbb{C}^{n+1}$ can be denoted efficiently by $(w,z,\zeta)$.

Let us now prove Theorem \ref{Holomorphic classification theorem}.

\begin{proof}
Applying Lemma \ref{normalizing quartic terms}, we will assume the nonzero terms in the Maclaurin expansions of $f$ and $f^*$ are of order greater than $3$.

Suppose that there exists a biholomorphism
\begin{align}\label{transformation of hypersurfaces}
\Phi(w,z,\zeta):=(w^*,z^*,\zeta^*)=(w+h,z_1+f_{1},\ldots, z_{n-1}+f_{n-1},\zeta+g)
\end{align}
of $\mathbb{C}^{n+1}$ satisfying $\Phi(0)=0$ and $\Phi(M_f\cap U)=M_{f_*}\cap \Phi(U)$ for some neighborhood $U\subset \mathbb{C}^{n+1}$ containing $0$, where $h, f_1,\ldots,f_{n-1},  g$ are holomorphic functions of $w,z_1,\ldots, z_{n-1},\zeta$ vanishing at $0$, and $z$ and $z^*$ abbreviate $z_1,\ldots, z_{n-1}$ and $z_1^*,\ldots, z_{n-1}^*$ respectively. Letting $h^k$, $f_{j}^k$, and $g^k$ denote the sum of the weight $k$ terms in $h$, $f_j$, and $g$ respectively with respect to the weights in \eqref{grading0}, we have
\[
w^*=w+h^0+h^1+\cdots,\quad
z_j^*=z_j+f_{j}^0+f_{j}^1+\cdots,\quad\mbox{and}\quad
\zeta^*=\zeta+g^0+g^1+\cdots.
\]
Letting $P$ be the defining function given by \eqref{main formula} with $f=0$, set $F:=P+f$ and $F^*=P+f^*$, that is, $u=F$ and $u^*=F^*$ are defining equations for $M_f$ and $M_{f^*}$ respectively.

Since $\Phi(M_f)=M_{f_*}$ near $0$, we have
\begin{align}\label{transformation of defining equation}
\quad P(z,\zeta)+f\left(\Re(z_1)\right)+\Re\left(h(F+iv,z,\zeta)\right)=P(z^*,\zeta^*)+f^*\left(\Re(z_1^*)\right)
\,\,\forall\,(w,z,\zeta)\in M_f
\end{align}
near $0$, because the left side of \eqref{transformation of defining equation} is equal to $\Re(w^*)$ whenever $(w,z_1,\ldots, z_{n-1},\zeta)\in M_f$. 

One sees quickly from \eqref{transformation of defining equation} that 
\begin{align}\label{low degree transformation terms}
h^0=h^1=f_{1}^0=\cdots=f_{n-1}^0=0.
\end{align}
Indeed, we get \eqref{low degree transformation terms} from the following observations. First, suppose $f_{1}^0,\cdots,f_{n-1}^0$ are not all zero, and pick $j$ such that $f_{j}^0$ has the lowest order of vanishing (i.e., a lowest degree nonzero term in $\zeta$). Letting this order be $I$, we have
\[
f_{j}^0(\zeta)=c\zeta^I+O(\zeta^{I+1})
\]
for some number $I\in\mathbb{N}$ and $c\in \mathbb{C}\setminus\{0\}$. The right side of \eqref{transformation of defining equation} has a mixed term that is a nonzero multiple of $z_{n-j}\bar \zeta^I$. On the other hand, there is no such  term on the left side of \eqref{transformation of defining equation}, since $\Re\left(h(w,z,\zeta)\right)$ is pluriharmonic, and by the explicit form of $P$, which contains only quadratic terms in $z_1, \dots, z_{n-1}$.
Given that contradiction, we must have $f_{1}^0=\cdots=f_{n-1}^0=0$. Consequently, terms on the right side of \eqref{transformation of defining equation} all have weights greater than $1$ with respect to \eqref{grading0}. Now if $h^0$ or $h^1$ are nonzero then the left side of \eqref{transformation of defining equation} would have terms $\Re(h^0)$ or $\Re(h^1)$, which are of weights $0$ and $1$, contradicting \eqref{transformation of defining equation}. So indeed \eqref{low degree transformation terms} holds.

Collecting terms of weight $2$ in the equation $\Re(w^*)=F^*(z^*,\zeta^*)$ (equivalently, \eqref{transformation of defining equation}), the perturbations $f$ and $f^*$ make no contribution to these terms, and we obtain 
\begin{align}\label{CMO weight 2}
\Re\big(w+h^2(P+iv,z,\zeta)\big)=P(z_1+f_{1}^1,\ldots, z_{n-1}+f_{n-1}^1,\zeta+g_0),
\end{align}
showing that the transformation
\[
(w,z,\zeta)\mapsto (w+h^2,z_1+f_{1}^1,\ldots, z_{n-1}+f_{n-1}^1,\zeta+g^0)
\]
is a symmetry of the model hypersurface $M_0=\{u=P\}$. Recall that the classification of such symmetries is given by Theorem \ref{modelalgebra} and Remark \ref{modelalgebra remark}. By solving the ODE system corresponding to a general vector field in model's real $3$-dimensional isotropy subalgebra,  it follows that there exist real numbers $a_1,a_2,a_3$ such that
\begin{align}\label{transformation solution a}
h^2=(e^{2a_1+na_2}-1)w+ia_3e^{a_1+a_2}z_1^2,
\quad
g^0=(e^{a_2}-1)\zeta,
\end{align}
and
\begin{align}\label{transformation solution b}
f_{j}^1=(e^{a_1+ja_2}-1)z_j+\delta_{j,n-1}ia_3z_1.
\end{align}

With these formulas in hand we can normalize the pair $M_f$ and $M_{f^*}$ by replacing $M_{f^*}$ with an equivalent hypersurface such that a new transformation $\Phi$ between this new pair has the simplified form of \eqref{transformation of hypersurfaces} with
\begin{align}\label{simplified transformation} 
\quad w^*=w+h^3+h^4+\cdots,\,\,
z_j^*=z_j+f_{j}^2+f_{j}^3+\cdots,\,\,\mbox{and}\,\,
\zeta^*=\zeta+g^1+g^2+\cdots.
\end{align}
To achieve \eqref{simplified transformation}, consider the transformation  $\Psi:\mathbb{C}^N\to\mathbb{C}^N$ given by 
\[
(\widehat{w},\widehat{z},\widehat{\zeta})=\Psi(w,z,\zeta):=\left(w+\widehat{h}^2,z_1+\widehat{f}_{1}^1,\ldots, z_{n-1}+\widehat{f}_{n-1}^1, \zeta+\widehat{g}^00\right)
\]
with $\widehat{h}^2$, $\widehat{f}_{j}^1$, and $\widehat{g}^0$ equal to the values of ${h}^2$, ${f}_{j}^1$, and ${g}^0$ in \eqref{transformation solution a} and \eqref{transformation solution b} but with $a_1,a_2,a_3$ replaced by $-a_1,-a_2,-a_3e^{-2a_1-na_2}$ respectively, which corresponds to applying the inverse transformation to the above weight zero symmetry.
Accordingly
\begin{align}
 \Re(\widehat{w})-P\left(\widehat{z},\widehat{\zeta}\right)-f^*\left(\Re(\widehat{z_1})\right)&=e^{-2a_1-na_2}\big(u-P(z,\zeta)\big)-f^*\big(e^{-a_1-a_2}x_1\big),
\end{align}
or, equivalently,
\begin{align}\label{rescaling transformation}
\Psi(M_{f^*(x)})=M_{c_1f^*(c_2x)},
\end{align}
for constants $c_1=e^{2a_1+na_2}$ and $c_2=e^{-a_1-a_2}$, so $M_{f^*(x)}$ and $M_{c_1f^*(c_2x)}$ are equivalent. Let us replace $f^*$ with $c_1f^*(c_2x)$, and replace $\Phi$ with the transformation $\Psi\circ \Phi$ bringing $M_f$ to the new $M_{f^*}$. With these replacements, the new $\Phi$ indeed has the simplified form in \eqref{simplified transformation}, so we can assume without loss of generality that \eqref{simplified transformation} holds.

Now, to produce a contradiction, let us suppose there exists a smallest integer $\mu>0$ such that one of the functions $h^{\mu+2},f_1^{\mu+1},\ldots, f_{n-1}^{\mu+1},g^{\mu}$ is nonzero.
Similar to \eqref{CMO weight 2}, it is easy to see that collecting weight $\mu+2$ terms and rearranging \eqref{transformation of defining equation} yields
\begin{align}\label{CMO weight mu+2}
\Re\left(g^{\mu}P_\zeta -h^{\mu+2}(P+iv,z,\zeta)+\sum_{j=1}^{n-1}f_j^{\mu+1}P_{z_j}\right)=cx_1^{\mu+2}
\end{align}
for some constant $c\in\mathbb{R}$.  
The right side in \eqref{CMO weight mu+2} comes from noticing that our choice of $\mu$ (ensuring $f_{1}^1=\cdots= f_{1}^{\mu}=0$) implies that the $\mu+2$ weighted term in $f\left(\Re(z_1)\right)-f^*\left(\Re(z_1^*)\right)$ depends only on $x_1$ (here one also uses the assumption that $f$ and $f^*$ are normalized as prescribed by Lemma \ref{normalizing quartic terms}).

In terms of the vector field
\[
Y:=h^{\mu+2}\dw+g^{\mu}\dzz+\sum_{j=1}^{n-1}f_{j}^{\mu+1}\dz{j},
\]
\eqref{CMO weight mu+2} implies
\begin{align}\label{CMO weight mu+2 alt}
\Re(Y)(P(z,\zeta)-u)=cx_1^{\mu+2}
\quad\quad\forall\, (w,z,\zeta)\in M_0.
\end{align}
Consider the decomposition $Y=Y^{\prime}+Y^{\prime\prime}$ given by
\[
Y^{\prime}=h^{\mu+2}\dw+f_{n-1}^{\mu+1}\dz{n-1}.
\]
It follows from the explicit form of $P$ that  $\Re(Y^{\prime\prime})(P(z,\zeta)-u)$ does not contain terms that depend only on $x_1$.

Note that no terms in $P(z,\zeta)-u$ depend only on $x_1$, so no terms in $$\Re(Y)(P(z,\zeta)-u)$$
that depend on $w$ will yield terms depending only on $x_1$ after making the substitution $u=P(z,\zeta)$. Hence, setting $z_2 = z_3 = \dots = z_{n-1} = \zeta = v =0$, we obtain 
\begin{align}\label{CMO weight mu+2 b}
ax_1\Re(z_1^{\mu+1})-b\Re(z_1^{\mu+2})=\Re(Y^{\prime})(P(z,\zeta)-u)=cx_1^{\mu+2}
\end{align}
from \eqref{CMO weight mu+2 alt} because all terms in 
\[
x_1\Re(f_{n-1}^{\mu+1})-\Re(h^{\mu+2})=\Re(Y^{\prime})(P(z,\zeta)-u)
\]
(both before and after substituting $u=P(z,\zeta)$) depend on at least one of the variables $z_2,\ldots,z_{n-1},\zeta,w$ except for terms of the form on the left side of \eqref{CMO weight mu+2 b}.
Applying the univariate Laplacian $\Delta$ (in the $z_1$ variable) twice to both sides of \eqref{CMO weight mu+2 b}, we get
\[
0=\Delta^2\left(ax_1\Re(z_1^{\mu+1})-b\Re(z_1^{\mu+2})\right)=\Delta^2(cx_1^{\mu+2})=(\mu+2)(\mu+1)\mu (\mu - 1) cx_1^{\mu-2}
\]
because $\Re(z_1^{\mu+1})$ and $\Re(z_1^{\mu+2})$ are harmonic. 
Therefore the coefficient $c$ (and thus the right side of \eqref{CMO weight mu+2 alt}) is zero, and hence
\begin{align}\label{Y is a symmetry}
Y\in\mathfrak{hol}(M_0,0).
\end{align}

Since $n>4$, \eqref{Y is a symmetry} contradicts Theorem \ref{modelalgebra}. Hence there is no smallest integer $\mu>0$ such that one of the functions $h^{\mu+2},f_{1}^{\mu+1},\ldots, f_{n-1}^{\mu+1},g^{\mu}$ is nonzero.
Therefore, after applying the normalizations of Lemma \ref{normalizing quartic terms} to $f$ and $f^*$ and furthermore normalizing $M_{f^*}$ by replacing it with $\Psi(M_{f^*})$ as in \eqref{rescaling transformation} such that \eqref{simplified transformation} holds, we get $\Phi=\mathrm{Id}$. Since $c_1$ and $c_2$ in \eqref{rescaling transformation} are arbitrary, this shows that after applying the normalizations of Lemma \ref{normalizing quartic terms}, $M_{f}$ and $M_{f^*}$ are equivalent at $0$ if and only if $f^*(x)=c_1f(c_2x)$ for some $c_1,c_2$. Lastly, considering two other functions $\hat{f}$ and $\hat{f}^*$ satisfying
\[
\hat{f}^{(4)}={f}^{(4)}
\quad\mbox{ and }\quad
(\hat{f}^*)^{(4)}=({f}^*)^{(4)},
\]
$(\hat{f}^*)^{(4)}(x)=\hat{c_1}\hat{f}^{(4)}(\hat{c_2}x)$ for some constants $\hat{c_1},\hat{c_2}$ if and only if $f^*(x)=c_1f(c_2x)$  for some other constants $c_1,c_2$, which completes the proof in light of Lemma \ref{normalizing quartic terms}.
\end{proof}

\begin{remark}\label{abelian}
An alternative proof of the above classification theorem could be obtained by using the classification of {\rm maximal Abelian subalgebras} in $\hol(M,0)$ (for more on this tool, see the work of Fels--Kaup \cite{FK11}). In view of Theorem \ref{thealgebras}, having the algebra's multiplication table in hand, such subalgebras are easy to classify: it is the shifts subalgebra spanned by $Y_{-1},...,Y_{-n},Y_{-1}'$ and all of its modifications obtained by replacing $Y_{-1}$ with $aY_{-1}+bX_{1-n}+cV_{2-n}$. The latter modifications can be always annihilated by applying to the subalgebra the flow of $V_{2-n}$ and that of a vector field \eqref{X1} above (in the latter case we also modify $M$ but keep the structure \eqref{main formula}). This reduces the mapping problem for hypersurfaces \eqref{main formula} to that subject to mappings preserving the shift subalgebra, which are all obviously real affine mappings. Observing the need to preserve the commutants of $\hol(M,0)$ of any order, we conclude that any (normalized) map is a real scaling of coordinates. From this the assertion of  Theorem \ref{Holomorphic classification theorem} will follow.  We, however, prefer to stay with the proof of Theorem \ref{Holomorphic classification theorem} given above, as it appears to be shorter and contains some new insight. 
\end{remark}

 \newcommand{\noop}[1]{}


\begin{thebibliography}{10}

\bibitem{ber}
M.~S. Baouendi, P.~Ebenfelt, and L.~Rothschild.
\newblock {\em Real submanifolds in complex space and their mappings},
  volume~47 of {\em Princeton Mathematical Series}.
\newblock Princeton University Press, Princeton, NJ, 1999.

\bibitem{bhr}
M.~S. Baouendi, X.~Huang, and L.~Preiss~Rothschild.
\newblock Regularity of {CR} mappings between algebraic hypersurfaces.
\newblock {\em Invent. Math.}, 125(1):13--36, 1996.

\bibitem{belweighted}
V.~K. Beloshapka.
\newblock Model {CR} surfaces: weighted approach, 2021.
\newblock \url{https://arxiv.org/abs/2112.14521}.

\bibitem{ebenfeltC3}
P.~Ebenfelt.
\newblock Normal forms and biholomorphic equivalence of real hypersurfaces in
  {$\mathbb{C}^3$}.
\newblock {\em Indiana Univ. Math. J.}, 47(2):311--366, 1998.

\bibitem{ebenfeltduke}
P.~Ebenfelt.
\newblock Uniformly {L}evi degenerate {CR} manifolds: the 5-dimensional case.
\newblock {\em Duke Math. J.}, 110(1):37--80, 2001.
\newblock Correction in Duke Math. J.,
  https://doi.org/10.1215/S0012-7094-06-13137-X.

\bibitem{fels2006locally}
G.~Fels.
\newblock Locally homogeneous finitely nondegenerate {CR}-manifolds, 2006.
\newblock \url{https://arxiv.org/abs/math/0606032}.

\bibitem{kaup2}
G.~Fels and W.~Kaup.
\newblock C{R}-manifolds of dimension 5: a {L}ie algebra approach.
\newblock {\em J. Reine Angew. Math.}, 604:47--71, 2007.

\bibitem{fels2008classification}
G.~Fels and W.~Kaup.
\newblock {C}lassification of {L}evi degenerate homogeneous {CR}-manifolds in
  dimension 5.
\newblock {\em Acta mathematica}, 201(1):1--82, 2008.

\bibitem{FK11}
G.~Fels and W.~Kaup.
\newblock Local tube realizations of {CR}-manifolds and maximal {A}belian
  subalgebras.
\newblock {\em Annali della Scuola Normale Superiore di Pisa-Classe di
  Scienze}, 10(1):99--128, 2011.

\bibitem{mozey2000}
\foreignlanguage{russian}{Можей, Наталья Павловна}.
\newblock \foreignlanguage{russian}{Однородные
  подмногообразия в четырехмерной аффинной
  и проективной геометрии}.
\newblock {\em \foreignlanguage{russian}{Известия высших
  учебных заведений. Математика}},
  \vspace{0pt}(7):41--52, 2000.

\bibitem{freeman1974local}
M.~Freeman.
\newblock Local complex foliation of real submanifolds.
\newblock {\em Mathematische Annalen}, 209(1):1--30, 1974.

\bibitem{freeman1977local}
M.~Freeman.
\newblock Local biholomorphic straightening of real submanifolds.
\newblock {\em Annals of Mathematics}, 106(2):319--352, 1977.

\bibitem{iz}
A.~Isaev and D.~Zaitsev.
\newblock Reduction of five-dimensional uniformly {L}evi degenerate {CR}
  structures to absolute parallelisms.
\newblock {\em J. Geom. Anal.}, 23(3):1571--1605, 2013.

\bibitem{kaupzaitsev}
W.~Kaup and D.~Zaitsev.
\newblock On local {CR}-transformation of {L}evi-degenerate group orbits in
  compact {H}ermitian symmetric spaces.
\newblock {\em J. Eur. Math. Soc. (JEMS)}, 8(3):465--490, 2006.

\bibitem{kolar2019complete}
M.~Kol{\'a}{\v{r}} and I.~Kossovskiy.
\newblock A complete normal form for everywhere {L}evi--degenerate
  hypersurfaces in $\mathbb{C}^3$.
\newblock {\em Advances in Mathematics}, 408:108590, 2022.

\bibitem{kruglikov20233I}
B.~Kruglikov and A.~Santi.
\newblock On 3-nondegenerate {CR} manifolds in dimension 7 ({I}): the
  transitive case.
\newblock {\em arXiv preprint arXiv:2302.04513}, 2023.

\bibitem{kruglikov20233II}
B.~Kruglikov and A.~Santi.
\newblock On 3-nondegenerate {CR} manifolds in dimension 7 ({II}): the
  intransitive case.
\newblock {\em arXiv preprint arXiv:2308.02841}, 2023.

\bibitem{labovskii1997dimensions}
A.~Labovskii.
\newblock On dimensions of the groups of biholomorphic automorphisms of
  real-analytic hypersurfaces.
\newblock {\em Mathematical Notes}, 61(3):287--294, 1997.

\bibitem{lm22}
B.~Lamel and N.~Mir.
\newblock Two decades of finite jet determination of {CR} mappings.
\newblock {\em Complex Anal. Synerg.}, 8(4):Paper No. 19, 15, 2022.

\bibitem{marini2021higher}
S.~Marini, C.~Medori, and M.~Nacinovich.
\newblock Higher order {L}evi forms on homogeneous {CR} manifolds.
\newblock {\em Mathematische Zeitschrift}, 299:563--589, 2021.

\bibitem{ms}
C.~Medori and A.~Spiro.
\newblock The equivalence problem for five-dimensional {L}evi degenerate {CR}
  manifolds.
\newblock {\em Int. Math. Res. Not. IMRN}, (20):5602--5647, 2014.

\bibitem{MP}
J.~Merker and S.~Pocchiola.
\newblock Explicit absolute parallelism for 2-nondegenerate real hypersurfaces
  {$M^5 \subset \mathbb {C}^3$} of constant {L}evi rank 1.
\newblock {\em J. Geom. Anal.}, 30(3):2689--2730, 2020.

\bibitem{Poc}
S.~Pocchiola.
\newblock Explicit absolute parallelism for 2-nondegenerate real hypersurfaces
  {$M^5\subset \mathbb{C}^3$} of constant {L}evi rank $1$, 2013.
\newblock \url{https://arxiv.org/abs/1312.6400}.

\bibitem{porter2021absolute}
C.~Porter and I.~Zelenko.
\newblock Absolute parallelism for 2-nondegenerate {CR} structures via bigraded
  {T}anaka prolongation.
\newblock {\em Journal f{\"u}r die reine und angewandte Mathematik (Crelles
  Journal)}, 2021(777):195--250, 2021.

\bibitem{santi2020homogeneous}
A.~Santi.
\newblock Homogeneous models for {L}evi degenerate {CR} manifolds.
\newblock {\em Kyoto Journal of Mathematics}, 60(1):291--334, 2020.

\bibitem{sykes2022maximal}
D.~Sykes and I.~Zelenko.
\newblock Maximal dimension of groups of symmetries of homogeneous
  2-nondegenerate {CR} structures of hypersurface type with a 1-dimensional
  {L}evi kernel.
\newblock {\em Transformation Groups}, pages 1--38, 2022.

\bibitem{sykes2023geometry}
D.~Sykes and I.~Zelenko.
\newblock On geometry of 2-nondegenerate {CR} structures of hypersurface type
  and flag structures on leaf spaces of {L}evi foliations.
\newblock {\em Advances in Mathematics}, 413:108850, 2023.

\bibitem{zaitsev2007untitled}
D.~Zaitsev.
\newblock On different notions of homogeneity for {CR}-manifolds.
\newblock {\em Asian Journal of Mathematics}, 11(2):331--340, 2007.

\end{thebibliography}
\end{document}